%%%%%%%%%%%%%%%%%%%%%%%%%%%%%%%%%%%%%%%%%%%%%%%%
%
%         S. Bouc, D. Yilmaz
%
%         Category of Diagonal $p$-permutation Functors
%
%

\documentclass[11pt]{article}

\input diagram.tex
\usepackage{amsmath} % eg. for \begin{equation}
\usepackage[mathscr]{eucal}
\usepackage{amssymb} % eg. for \leqslant
\usepackage{theorem}
\usepackage{tikz-cd}
\usetikzlibrary{positioning}

\usepackage{enumerate}

\topmargin=-1cm
\textheight=20cm
\oddsidemargin=0.5cm
\textwidth=15cm

%%%%%%%%%%%%%%%%%%%% THEOREM- UND GLEICHUNGSDEFINITIONEN %%%%%%%%%%%%%

\theoremstyle{change}  % puts numbers IN FRONT of "Theorem"
\newtheorem{theorem}{Theorem}[section] % defines environment "Theorem".
                                       % puts number "2.5" in front of
                                       % "Theorem", if counter
                                       %  "theorem"  has value 5 and
                                       %  counter "mysection" has value 2.
      % When counter "mysection" increases with "stepcounter"-command,
      % then counter "theorem" is automatically reset.
\newtheorem{lemma}[theorem]{Lemma}  % defines environment "Lemma", that
                                    % is numbered in the same way as the
                                    % environment theorem.
\newtheorem{proposition}[theorem]{Proposition}
\newtheorem{corollary}[theorem]{Corollary}
\theorembodyfont{\rmfamily}  % has the effect that the content of Remark,
                             % Example, and Examples is set in
                             % "\rmfamily".
\newtheorem{Remark}[theorem]{Remark}

\newtheorem{definition}[theorem]{Definition}
\newtheorem{notation}[theorem]{Notation}
\newtheorem{nothing}[theorem]{} % empty Theoremumgebung.

\newenvironment{proof}{\noindent{\bf Proof}\ }{\qed\bigskip}

%\newcounter{equation}[theorem]
      % The new environment has the same effect as the old one except
      % for creating the label (2.5.c) if it is the third equation in
      % the environment "Theorem 2.5".
%\renewcommand{\theequation}{\thetheorem.{\rm \alph{equation}}}

%%%%%%%%%%%%%%%%%%%%%%%%%%%% MACROS %%%%%%%%%%%%%%%%%%%%%%%%%%%%%%%%%%

\renewcommand{\le}{\leqslant} % needs amssymb-Paket

\newcommand{\Aut}{\mathrm{Aut}}

\newcommand{\Bl}{\mathrm{Bl}}

\newcommand{\calA}{\mathcal{A}}

\newcommand{\calC}{\mathcal{C}}
        
\newcommand{\calD}{\mathcal{D}}               

\newcommand{\calF}{\mathcal{F}}

\newcommand{\calH}{\mathcal{H}}

\newcommand{\calL}{\mathcal{L}}

\newcommand{\calO}{\mathcal{O}}
\newcommand{\calP}{\mathcal{P}}

\newcommand{\calQ}{\mathcal{Q}}

\newcommand{\calU}{\mathcal{U}}
\newcommand{\calV}{\mathcal{V}}
\newcommand{\calW}{\mathcal{W}}

\newcommand{\calY}{\mathcal{Y}}
\newcommand{\calZ}{\mathcal{Z}}

\newcommand{\catfont}{\mathsf}

\newcommand{\Def}{\mathrm{Def}}

\newcommand{\FF}{\mathbb{F}}
\newcommand{\KK}{\mathbb{K}}

\newcommand{\Fun}{\mathrm{Fun}}

\newcommand{\Hom}{\mathrm{Hom}}

\newcommand{\id}{\mathrm{id}}

\newcommand{\Ind}{\mathrm{Ind}}

\newcommand{\Inf}{\mathrm{Inf}}
\newcommand{\Inn}{\mathrm{Inn}}

\newcommand{\Isom}{\mathrm{Iso}}

\newcommand{\lexp}[2]{\setbox0=\hbox{$#2$} \setbox1=\vbox to \ht0{}\,\box1^{#1\,}\!#2}

\newcommand{\lmod}[1]{\llap{\phantom{|}}_{#1}\catfont{mod}}
\newcommand{\lMod}[1]{\llap{\phantom{|}}_{#1}\catfont{Mod}}

\newcommand{\Out}{\mathrm{Out}}

\newcommand{\qed}{\nobreak\hfill
                  \vbox{\hrule\hbox{\vrule\hbox to 5pt
                  {\vbox to 8pt{\vfil}\hfil}\vrule}\hrule}}
\newcommand{\Res}{\mathrm{Res}}

\newcommand{\ZZ}{\mathbb{Z}}

\newcommand{\Br}{\mathrm{Br}}

\newcommand{\DD}{D^{\Delta}}

\newcommand{\FFT}{\mathbb{F}T}
\newcommand{\FFTD}{\mathbb{F}T^{\Delta}}
\newcommand{\RTD}{RT^{\Delta}}

\newcommand{\fracb}[2]{\frac{\raisebox{-.7ex}{$\scriptstyle #1$}}{\raisebox{.7ex}{$\scriptstyle #2$}}}
\newcommand{\Proj}{\mathrm{Proj}}
\newcommand{\Fppk}[1]{\calF_{#1pp_k}^\Delta}
%%%%%%%%%%%%%%%%%%%%%%%%%%%% TITLE %%%%%%%%%%%%%%%%%%%%%%%%%%%%%%%%%%%
\title{Diagonal $p$-permutation functors, semisimplicity, and functorial equivalence of blocks}
\author{Serge Bouc and Deniz Y\i lmaz}
\date{}
% Keywords command
\providecommand{\keywords}[1]
{
  \small\smallskip\par	
  \hspace{2ex}\textbf{Keywords:} #1
}
\providecommand{\msc}[1]
{
  \small\smallskip\par	
  \hspace{2ex}\textbf{MSC2020:} #1
}

%%%%%%%%%%%%%%%%%%% BEGIN %%%%%%%%%%%%%%%%%%%%%%%%%%%%%%%%%%%%%%%%%%%%

\begin{document}
\sloppy

\maketitle
\begin{abstract} Let $k$ be  an algebraically closed field of characteristic $p>0$, let $R$ be a commutative ring, and let $\FF$ be an algebraically closed field of characteristic 0. We consider the $R$-linear category $\Fppk{R}$ of diagonal $p$-permutation functors over $R$. We first show that the category $\Fppk{\FF}$ is {\em semisimple}, and we give a parametrization of its simple objects, together with a description of their evaluations. \par
Next, to any pair $(G,b)$ of a finite group $G$ and a block idempotent $b$ of $kG$, we associate a diagonal $p$-permutation functor $\RTD_{G,b}$ in $\Fppk{R}$. We find the {\em decomposition} of the functor $\FFTD_{G,b}$ as a direct sum of simple functors in $\Fppk{\FF}$. This leads to a characterization of {\em nilpotent blocks} in terms of their associated functors in $\Fppk{\FF}$. \par
Finally, for such pairs $(G,b)$ of a finite group and a block idempotent, we introduce the notion of {\em functorial equivalence} over $R$, which (in the case {$R=\nolinebreak\ZZ$}) is slightly weaker than $p$-permutation equivalence, and we prove a corresponding {\em finiteness theorem}: for a given finite $p$-group $D$, there is only a finite number of pairs $(G,b)$, where $G$ is a finite group and $b$ a block idempotent of $kG$ with defect isomorphic to $D$, up to functorial equivalence over~$\FF$.\par
%{\bf Keywords:} diagonal $p$-permutation functor, semisimple, block, functorial equivalence.
\end{abstract}
\keywords{diagonal $p$-permutation functor, semisimple, block, functorial equivalence.}
\msc{16S34, 18B99, 20C20, 20J15.}
\section{Introduction}
In the past decades, various categories have been considered, where objects are finite groups and morphisms are obtained from various types of double group actions. The linear representations of these categories give rise to interesting functor categories: Examples include biset functors (\cite{Bouc2010a}), $p$-permutation functors (\cite{Ducellier2015}), simple modules over Green biset functors (\cite{romero-simple}), modules over shifted representation functors (\cite{shifted}), fibered biset functors (\cite{boltje-coskun2018}, and generalizations of these (\cite{barker-ogut2020}, \cite{barker2016}). \par
In the present paper, we consider another example of a similar context: For an algebraically closed field $k$ of positive characteristic $p$, and a commutative ring $R$, we define the following category $Rpp_k^\Delta$:
\begin{itemize}
\item The objects of $Rpp_k^\Delta$ are the finite groups.
\item For finite groups $G$ and $H$, the set of morphisms $\Hom_{Rpp_k^\Delta}(G,H)$ from $G$ to $H$ in $Rpp_k^\Delta$ is equal to $R\otimes_{\ZZ} T^\Delta(H,G)$, where $T^\Delta(H,G)$ is the Grothendieck group of the category of {\em diagonal $p$-permutation $(kH,kG)$-bimodules}. These are $p$-permutation bimodules which admit only indecomposable direct summands with twisted diagonal vertices (or equivalently, $p$-permutation bimodules which are projective when considered as left or right modules).
\item The composition in $Rpp_k^\Delta$ is induced by $R$-linearity from the usual tensor product of bimodules: if $G$, $H$, and $K$ are finite groups, if $M$ is a diagonal $p$-permutation $(kH,kG)$-bimodule and $N$ is a diagonal $p$-permutation $(kK,kH)$-bimodule, then $N\otimes_{kH}M$ is a diagonal $p$-permutation $(kK,kG)$-bimodule. The composition of (the isomorphism class of) $N$ and (the isomorphism class of) $M$ is by definition (the isomorphism class of) $N\otimes_{kH}M$.
\item The identity morphism of the group $G$ is the (isomorphism class of the) $(kG,kG)$-bimodule $kG$.
\end{itemize}
The category $Rpp_k^\Delta$ is an $R$-linear category. The $R$-linear functors from $Rpp_k^\Delta$ to the category $\lMod{R}$ of $R$-modules are called {\em diagonal $p$-permutation functors} over $R$. These functors, together with natural transformations between them, form an $R$-linear abelian category $\Fppk{R}$. \par
These diagonal $p$-permutation functors have been introduced in \cite{BoucYilmaz2020}, in the case $R$ is a field $\FF$ of characteristic 0. Even though this will also be our assumption in most of the present paper, we give the above more general definition, as we will also need to consider the case $R=\ZZ$ in various places.\par
The main motivation for considering diagonal $p$-permutation functors comes from block theory, and in particular the notion of $p$-permutation equivalence of blocks of finite groups, introduced in \cite{boltje-xu} and developed in \cite{BoltjePerepelitsky2020}. This notion inserts in the following chain of equivalences of blocks of group algebras, namely
$$\hbox{Puig's equiv.}\implies \hbox{Rickard's splendid derived equiv.}\implies \hbox{$p$-permutation equiv.},$$
and these equivalences are related to important {\em structural conjectures}, such as Brou{\'e}'s abelian defect group conjecture (Conjecture 9.7.6 in~\cite{linckelmann2018}), and {\em finiteness conjectures}, such as Puig's conjecture (Conjecture 6.4.2 in~\cite{linckelmann2018}), or Donovan's conjecture (Conjecture 6.1.9 in~\cite{linckelmann2018}).\par
In this paper, we introduce yet another equivalence, weaker than $p$-permutation equivalence, between blocks of group algebras, which we call {\em functorial equivalence} over $R$: to each pair $(G,b)$ of a finite group $G$ and a block idempotent $b$ of the group algebra $kG$, we associate a canonical diagonal $p$-permutation functor over $R$, denoted by $RT^\Delta_{G,b}$. This functor is a direct summand of the representable functor $RT^\Delta_G$ at $G$, obtained from the $(kG,kG)$-bimodule $kGb$, viewed as an idempotent endomorphism of $G$ in the category $Rpp_k^\Delta$. When $(H,c)$ is a pair of a finite group $H$ and a block idempotent $c$ of $kH$, we say that $(G,b)$ and $(H,c)$ are {\em functorially equivalent over $R$} if the functors $RT^\Delta_{G,b}$ and $RT^\Delta_{H,c}$ are isomorphic in $\Fppk{R}$.\medskip\par
We obtain the following main results:
\begin{itemize}
\item The category $\Fppk{\FF}$ of diagonal $p$-permutation functors over $\FF$ is a {\em semisimple} $\FF$-linear abelian category (Theorem~\ref{Fppk semisimple}).
\item The {\em simple} diagonal $p$-permutation functors over $\FF$ are parametrized by triples $(L,u,V)$, where $(L,u)$ is a $D^\Delta$-pair (see Section~\ref{sec pairs}), and $V$ is a simple $\FF\Out(L,u)$-module (see Notation~\ref{OutLu}).
\item The {\em evaluation} $S_{L,u,V}(G)$ of the simple functor $S_{L,u,V}$ parametrized by the triple $(L,u,V)$, at a finite group $G$, is explicitly computed in Corollary~\ref{evaluation simple}.
\item The {\em multiplicity} of any simple functor $S_{L,u,V}$ in the functor $\FFTD_{G,b}$ associated to a block idempotent $b$ of a finite group $G$ is explicitly given by three equivalent formulas (Theorem~\ref{thm multiplicityformulas}): One in terms of fixed points of some subgroups of $\Out(L,u)$ on $V$, the second one in terms of the ``$u$-invariant'' $(G,b)$-Brauer pairs $(P,e)$, and the third one in terms of the ``$u$-invariant'' local pointed subgroups $P_\gamma$ of $G$ on $kGb$.
\item We give two characterizations (Theorem~\ref{thm nilpotentcharacterization}) of {\em nilpotent blocks} in terms of their associated diagonal $p$-permutation functors: let $b$ be a block idempotent of the group algebra $kG$. Then $b$ is nilpotent if and only if one of the equivalent following conditions holds:
\begin{itemize} 
\item If $S_{L,u,V}$ is a simple summand of $\FFTD_{G,b}$, then $u=1$.
\item The functor $\FFTD_{G,b}$ is isomorphic to the representable functor $\FFTD_D$ for some $p$-group $D$.
\end{itemize} 
\item We show (Proposition~\ref{fun equivalence}) that if $(G,b)$ and $(H,c)$ are functorially equivalent blocks over $\FF$, then $kGb$ and $kHc$ have the {\em same number of simples modules}, and $b$ and $c$ have {\em isomorphic defect groups}. 
\item We show (Proposition~\ref{fun equivalence}) that if $b$ and $c$ are {\em nilpotent} blocks of $G$ and~$H$, respectively, then $(G,b)$ and $(H,c)$ are functorially equivalent over $\FF$ {\em if and only} if $b$ and $c$ have isomorphic defect groups.
\item We prove a {\em finiteness theorem} (Theorem~\ref{finiteness}) for functorial equivalence of blocks: for a given finite $p$-group $D$, there are only finitely many pairs $(G,b)$, where $G$ is a finite group, and $b$ is a block idempotent of $kG$ with defect isomorphic to $D$, up to functorial equivalence over~$\FF$.
\item We give a sufficient condition (Theorem~\ref{abelian defect}) for two pairs $(G,b)$ and $(H,c)$ to be functorially equivalent over $\FF$ in the situation of {\em Brou{\'e}'s abelian defect group conjecture}.
\end{itemize}
The paper is organized as follows: Sections 2 to 5 are devoted to technical tools used in the proof of our semisimplicity Theorem~\ref{Fppk semisimple}. Section 2 deals with Brauer quotients of tensor products of diagonal $p$-permutation bimodules, Section 3 recalls the definitions of pairs, $\DD$-pairs, and idempotents of $p$-permutation rings. The main theorem of this section is Theorem 3.7. In Section~4 we state some results from Clifford theory, and in Section~5 a theorem on some equivalences of abelian categories. Section~6 is devoted to the proof of our semisimplicity theorem (Corollary~\ref{Fppk semisimple}). In Section~7, we compute the evaluations of the simple diagonal $p$-permutation functors (Corollary~\ref{evaluation simple}). In Section~8, we introduce the diagonal $p$-permutation functors associated to blocks of finite groups, and we describe their decomposition as a direct sum of simple functors (Theorem~\ref{thm multiplicityformulas}). In Section~9, we apply these results to the characterization of nilpotents blocks in terms of the associated functors (Theorem~\ref{thm nilpotentcharacterization}). In Section~10, we introduce functorial equivalence of blocks, and state some of its basic consequences (Proposition~\ref{fun equivalence}); next we prove our finiteness theorem for functorial equivalence over $\FF$ (Theorem~\ref{finiteness}). Finally, Section~11 considers the case of blocks with abelian defect groups (Theorem~\ref{abelian defect}).

\section{Brauer character formula}\label{sec Brauercharacterformula}

Throughout $G$, $H$ and $K$ denote finite groups. Let $\FF$ denote an algebraically closed field of characteristic $0$ and let $k$ denote an algebraically closed field of positive characteristic $p$. We assume that all the modules considered are finitely generated.

Let $M$ be a $(kG,kH)$-bimodule. For $m\in M$ and $(g,h)\in G\times H$, the formula $(g,h)m=gmh^{-1}$ induces an isomorphism between categories $\lmod{kG}_{kH}$ and $\lmod{k[G\times H]}$. In what  follows, we will often use this isomorphism to identify left $k[G\times H]$-modules via $(kG,kH)$-bimodules.

Let $M$ be a $kG$-module and let $P$ be a $p$-subgroup of $G$. The Brauer construction of $M$ at $P$ will be denoted by $M[P]$.   

\begin{lemma}\label{lem deflationcharacter}
Let $N\unlhd G$ be a normal subgroup of $G$ and let $V$ be an $\FF G$-module with character~$\chi$. Then the character of $\Def^G_{G/N}V$ is given by
\begin{align*}
gN\mapsto \frac{1}{|N|}\sum_{n\in N} \chi(gn)
\end{align*}
for $gN\in G/N$. 
\end{lemma}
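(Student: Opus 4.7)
The plan is to identify $\Def^G_{G/N} V$ concretely as a subspace of $V$ on which the action of $G/N$ can be read off, and then compute the relevant trace. In characteristic $0$, the deflation functor coincides (via averaging) with the fixed-point functor $V \mapsto V^N$, and $V^N = e_N V$ where $e_N = \frac{1}{|N|}\sum_{n \in N} n$ is the idempotent in $\FF G$ associated to $N$. Since $N$ is normal in $G$, one checks that $g e_N g^{-1} = e_N$ for every $g \in G$, so $e_N$ commutes with $g$ in $\FF G$. In particular, $g$ preserves the decomposition $V = e_N V \oplus (1-e_N) V$, and the action of $gN$ on $\Def^G_{G/N} V = e_N V$ is just the restriction of the action of $g$.

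Next, I would compute the character using the standard trick that for an idempotent $e$ commuting with an endomorphism $g$ of $V$, the trace of $g$ on $eV$ equals the trace of $eg$ on $V$. Applying this yields
\begin{align*}
\chi_{\Def^G_{G/N} V}(gN) \;=\; \Tr(g \mid e_N V) \;=\; \Tr(e_N g \mid V) \;=\; \frac{1}{|N|}\sum_{n \in N} \chi(ng).
\end{align*}

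Finally, to match the form in the statement, I would reindex the sum: because $N$ is normal, the map $n \mapsto g^{-1} n g$ is a bijection of $N$, and $\chi(ng) = \chi(g \cdot g^{-1} n g)$ is a conjugate of $\chi$ evaluated at $g \cdot (g^{-1} n g)$. Thus $\sum_{n \in N} \chi(ng) = \sum_{n \in N} \chi(gn)$, giving the claimed formula.

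There is no real obstacle; the only points deserving care are the well-definedness of $\Def^G_{G/N} V$ as $V^N$ (valid because $\mathrm{char}\,\FF = 0$), the centrality of $e_N$ with respect to $g$ (which uses normality of $N$), and the symmetrization of the sum at the end.
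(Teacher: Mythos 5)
Your proof is correct, but it takes a different route from the paper. The paper simply invokes the general character formula for biset operations (Lemma 7.1.3 of \cite{Bouc2010a}), which expresses the character of $\Def^G_{G/N}V$ at $gN$ as $\frac{1}{|G|}\sum \chi(h)$ over pairs $(u,h)\in G/N\times G$ with $gN\cdot u=u\cdot h$, and then observes that this sum collapses to $\frac{1}{|N|}\sum_{n\in N}\chi(gn)$. You instead argue intrinsically: since $\mathrm{char}\,\FF=0$, deflation (coinvariants) is naturally isomorphic to the fixed-point module $V^N=e_NV$ with $e_N=\frac{1}{|N|}\sum_{n\in N}n$, normality of $N$ gives $ge_Ng^{-1}=e_N$, and then $\Tr(g\mid e_NV)=\Tr(e_Ng\mid V)=\frac{1}{|N|}\sum_{n\in N}\chi(ng)=\frac{1}{|N|}\sum_{n\in N}\chi(gn)$ (the last step needs only the bijection $n\mapsto g^{-1}ng$ of $N$, or simply conjugation-invariance of $\chi$). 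Each of your flagged care points — the identification $\Def^G_{G/N}V\cong V^N$ in characteristic $0$, the commutation of $e_N$ with $g$, and the trace identity $\Tr(g\mid eV)=\Tr(eg\mid V)$ for an idempotent commuting with $g$ — is valid. What each approach buys: the paper's argument is a one-line specialization of a formula it already has available and works uniformly for arbitrary bisets, while yours is self-contained, avoids the external citation, and makes transparent exactly where the characteristic-zero hypothesis enters.
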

\begin{proof}
By \cite[Lemma 7.1.3]{Bouc2010a} the character of $\Def^G_{G/N}V$ is given by the formula
\begin{align*}
gN\mapsto \frac{1}{|G|}\sum_{\substack{u\in G/N, h\in G\\ gN\cdot u=u\cdot h}}\chi(h)=\frac{1}{|G|} |G/N|\sum_{n\in N} \chi(gn)=\frac{1}{|N|}\sum_{n\in N}\chi(gn)\,,
\end{align*}
as desired.
\end{proof}

Let $X$ be a subgroup of $G\times H$ and let $L$ be a finite dimensional $\FF X$-module. Let also $Y$ be a subgroup of $H\times K$, and $M$ a finite dimensional $\FF Y$-module. Since $k_1(X)\times k_2(X)$ is a subgroup of~$X$, the module $L$ can be viewed as an $\big(\FF k_1(X),\FF k_2(X)\big)$-bimodule. Similarly, $M$ can be viewed as an $\big(\FF k_1(Y),\FF k_2(Y)\big)$-bimodule.  Set $S=k_2(X)\cap k_1(Y)$. Then the tensor product $L\otimes_{\FF S}M$ is an $\big(\FF k_1(X), \FF k_2(Y)\big)$-bimodule. For $(a,b)\in X*Y$, choose $h\in H$ such that $(a,h)\in X$ and $(h,b)\in Y$. Then
$$(a,b)(l\otimes m)=(a,h)l\otimes (h,b)m$$
is a well defined element of $L\otimes_{\FF S}M$, and this defines a structure of $\FF(X*Y)$-module on $L\otimes_{\FF S}M$. This construction is first used in \cite{Bouc2010b}.

Let $X\times_H Y:=\{((g,h), (\tilde{h}, k))\in X\times Y \mid h=\tilde{h}\}$. Consider the surjective group homomorphism
\begin{align*}
\nu: X\times_H Y\to X*Y\,, \quad \left((g,h),(h,k)\right)\mapsto (g,k)\,.
\end{align*}
The kernel of $\nu$ is $\{\left((1,h),(h,1)\right)\mid h\in S\}$ and we denote by $\bar{\nu}: (X\times_H Y)/\ker(\nu)\to X*Y$ the isomorphism induced by $\nu$. 

\begin{lemma}\label{lem characterofstar}
Let $X\le G\times H$ and $Y\le H\times K$ be subgroups. Let also $L$ be a finite dimensional $\FF X$-module and $M$ a finite dimensional $\FF Y$-module. Then the character $\chi_{L\otimes_{\FF S}M}$ of the $\FF (X*Y)$-module $L\otimes_{\FF S} M$ is given by
$$\chi_{L\otimes_{\FF S}M}(a,b)=\frac{1}{|S|}\sum_{\substack{h\in H\\(a,h)\in X\\(h,b)\in Y}}\chi_L(a,h)\,\chi_M(h,b),$$
for all $(a,b)\in X*Y$, where $\chi_L$ and $\chi_M$ are the characters of $L$ and $M$, respectively.
\end{lemma}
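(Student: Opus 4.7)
The plan is to identify $L \otimes_{\FF S} M$ as a deflation of a restriction of the external tensor product $L \boxtimes M$, and then apply Lemma~\ref{lem deflationcharacter}. Specifically, consider $L \boxtimes M$ as an $\FF(X \times Y)$-module, restrict it to the pullback subgroup $X \times_H Y$, and then deflate along the surjection $\nu: X \times_H Y \to X*Y$ using the isomorphism $\bar{\nu}$. I claim that there is an isomorphism of $\FF(X*Y)$-modules
$$L \otimes_{\FF S} M \;\cong\; \Def_{X*Y}^{X \times_H Y}\Res^{X\times Y}_{X\times_H Y}(L \boxtimes M).$$

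First I would verify that $\ker(\nu)$ acts trivially on $L \otimes_{\FF S} M$. For $s \in S$, the element $((1,s),(s,1)) \in \ker(\nu)$ acts on $l \otimes m$ as $(1,s)l \otimes (s,1)m$. Interpreting the bimodule structure, $(1,s)l = l \cdot s^{-1}$ is the right action of $s^{-1} \in k_2(X)$ on $l$, and $(s,1)m = s \cdot m$ is the left action of $s \in k_1(Y)$ on $m$. Since the tensor product is over $\FF S$, we get $l \cdot s^{-1} \otimes s \cdot m = l \otimes m$, so $\ker(\nu)$ indeed acts trivially. The resulting $\FF(X*Y)$-module structure, via $\bar{\nu}$, matches the prescription $(a,b)(l \otimes m) = (a,h)l \otimes (h,b)m$ for any choice of $h$ lifting $(a,b)$.

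Next I would compute the character. Fix $(a,b) \in X*Y$ and choose some $h_0 \in H$ with $(a,h_0) \in X$ and $(h_0,b) \in Y$, giving a lift $((a,h_0),(h_0,b)) \in X \times_H Y$. By Lemma~\ref{lem deflationcharacter},
$$\chi_{L \otimes_{\FF S} M}(a,b) = \frac{1}{|\ker \nu|}\sum_{n \in \ker \nu} \chi_{L \boxtimes M}\bigl(n \cdot ((a,h_0),(h_0,b))\bigr).$$
Since $|\ker \nu| = |S|$ and the product in $X \times Y$ is componentwise, we have $((1,s),(s,1)) \cdot ((a,h_0),(h_0,b)) = ((a, sh_0),(sh_0, b))$, so
$$\chi_{L \otimes_{\FF S} M}(a,b) = \frac{1}{|S|}\sum_{s \in S} \chi_L(a, sh_0)\,\chi_M(sh_0, b).$$
Finally, I would check that as $s$ ranges over $S$, the element $sh_0$ ranges bijectively over $\{h \in H \mid (a,h)\in X,\ (h,b)\in Y\}$: if $h$ lies in this set, then $(1, hh_0^{-1}) = (a,h)(a,h_0)^{-1} \in X$ and $(hh_0^{-1},1) = (h,b)(h_0,b)^{-1} \in Y$, so $hh_0^{-1} \in k_2(X) \cap k_1(Y) = S$; conversely, for any $s \in S$ both $(a, sh_0) = (1,s)(a,h_0) \in X$ and $(sh_0, b) = (s,1)(h_0,b) \in Y$. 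This reindexing yields the claimed formula.

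The main obstacle here is essentially bookkeeping: keeping track of sign conventions for the bimodule action versus left action and correctly identifying the coset of admissible middle elements. Once the isomorphism with the deflated restriction is in place, the character formula falls out directly from Lemma~\ref{lem deflationcharacter}.
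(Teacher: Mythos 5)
Your proof is correct and follows essentially the same route as the paper: identify $L\otimes_{\FF S}M$ with $\Def\circ\Res$ of $L\boxtimes M$ along $\nu$, apply Lemma~\ref{lem deflationcharacter}, and reindex the sum over the coset $Sh_0$ as a sum over the admissible $h\in H$. The only difference is that the paper imports the identification of $\chi_{L\otimes_{\FF S}M}$ with the deflated-restricted character from \cite[Proposition~2.8]{Boltje20}, whereas you verify the underlying module isomorphism directly, which is a harmless (and self-contained) substitute.
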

\begin{proof}
Let $(a,b)\in X*Y$ be an arbitrary element and $c\in H$ such that $(a,c)\in X$ and $(c,b)\in Y$. By \cite[Proposition~2.8]{Boltje20}, we have
\begin{align*}
\chi_{L\otimes_{\FF S}M}=\left(\Isom(\bar{\nu})\circ \Def^{X\times_H Y}_{(X\times_H Y)/\ker(\nu)}\circ \Res^{X\times Y}_{X\times_H Y}\right)(\chi_L\times \chi_M)\,.
\end{align*}
Hence by Lemma \ref{lem deflationcharacter} we have
\begin{align*}
\chi_{L\otimes_{\FF S}M}(a,b)&=\frac{1}{|S|}\sum_{h\in S}\chi_L(a, hc)\chi_M(hc, b)\\&
=\frac{1}{|S|}\sum_{h\in Sc}\chi_L(a,h)\chi_M(h,b)\\&
=\frac{1}{|S|}\sum_{\substack{h\in H\\(a,h)\in X\\(h,b)\in Y}}\chi_L(a,h)\,\chi_M(h,b)\,,
\end{align*}
as desired.
\end{proof}

\begin{notation}
{\rm (a)} Let $U\le G$ and $W\le K$ be subgroups and $\gamma: W\to U$ a group isomorphism. We set
\begin{align*}
\Delta(U,\gamma,W)=\big\{\big(\gamma(w),w\big):w\in W\big\}
\end{align*}
for the corresponding twisted diagonal subgroup of $G\times K$.

\smallskip
{\rm (b)}
Let $\Delta(U,\gamma, W)$ be a twisted diagonal subgroup of $G\times K$. 
Let $\Gamma_{H}(U,\gamma, W)$ denote the set of triples $(\alpha, V,\beta)$ where $V$ is a subgroup of $H$, and $\alpha: V\to U$ and $\beta: W\to V$ are group isomorphisms with the property that $\gamma=\alpha \circ \beta$. The group $N_{G\times K}(\Delta(U,\gamma, W))\times H$ acts on $\Gamma_H(U,\gamma, W)$ in the following way: if $\big((x,z),h\big)\in N_{G\times K}(\Delta(U,\gamma, W))\times H$ and $(\alpha,V,\beta)\in \Gamma_H(U,\gamma, W)$, then
$$\big((x,z),h\big)\cdot (\alpha,V,\beta)=(i_x\circ\alpha\circ i_h^{-1},{^hV},i_h\circ \beta\circ i_z^{-1}),$$
where $i_u$ denotes the conjugation by an element $u$.
\end{notation}

\begin{proposition}\cite[Corollary~7.4(b)]{BoltjePerepelitsky2020} \label{Brauer quotient}
Let $L$ be a $p$-permutation $(kG, kH)$-bimodule and $M$ a $p$-permutation $(kH, kK)$-bimodule. Suppose that all of the indecomposable summands of $L$ and $M$ have twisted diagonal vertices. Let $\tilde{\Gamma}_{H}(U,\gamma,W)$ denote a set of representatives of $N_{G\times K}(\Delta(U,\gamma, W))\times H$-orbits of $\Gamma_H(U,\gamma, W)$.    Then the $kN_{G\times K}(\Delta(U,\gamma,W))$-module $(L\otimes_{kH} M)[\Delta(U,\gamma,W)]$ is isomorphic to
\begin{align*}
\bigoplus_{(\alpha, V,\beta)}\Ind_{N_{G\times H}(\Delta(U,\alpha, V)) \ast N_{H\times K}(\Delta(V,\beta, W))} ^{N_{G\times K}(\Delta(U,\gamma, W))} \left( L\big[\Delta(U,\alpha, V)\big]\otimes_{kC_H(V)} M\big[\Delta(V,\beta, W)\big]\right)\,,
\end{align*}
where $(\alpha, V,\beta)\in \tilde{\Gamma}_{H}(U,\gamma,W)$.
\end{proposition}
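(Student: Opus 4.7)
I would prove this by reducing to the case of permutation bimodules and then running a Mackey-type double-coset analysis. First, since both the tensor product over $kH$ and the Brauer construction at $\Delta(U,\gamma,W)$ commute with direct sums, and since every $p$-permutation bimodule with twisted diagonal vertices is (after perhaps passing to a larger module) a direct summand of a permutation bimodule $k[(G\times H)/X]$ with $X$ twisted diagonal, the essential case is $L = k[(G\times H)/X]$ and $M = k[(H\times K)/Y]$. For permutation bimodules the tensor product has a classical description via the star product: $L \otimes_{kH} M$ is a direct sum of induced permutation modules $\Ind_{X*_h Y}^{G\times K} k$, indexed by representatives $h$ of $(k_2(X), k_1(Y))$-double cosets in $H$, where $X*_h Y$ is the star product of $X$ and the $h$-twisted version of~$Y$.

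Next, I would apply the standard formula for the Brauer quotient of a transitive permutation module: $(\Ind_Z^{G\times K} k)[\Delta(U,\gamma,W)]$ is a permutation module for $N_{G\times K}(\Delta(U,\gamma,W))$ whose basis is indexed by those double cosets $z$ for which $\Delta(U,\gamma,W) \le \, ^z Z$. Applied summand by summand, this singles out exactly the pairs $(h,z)$ for which $\Delta(U,\gamma,W) \le \, ^z(X *_h Y)$. Unwinding the definition of the star product, such an inclusion is equivalent to the existence of a subgroup $V \le H$ together with isomorphisms $\alpha: V \to U$ and $\beta: W \to V$ with $\gamma = \alpha\circ\beta$, and with choices $z_1,z_2$ for which $\Delta(U,\alpha,V) \le \, ^{z_1} X$ and $\Delta(V,\beta,W) \le \, ^{z_2} Y$. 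This is precisely the data of an element of $\Gamma_H(U,\gamma,W)$.

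The final step is orbit bookkeeping. The surviving double-coset representatives $(h,z)$ are in natural bijection with the $(N_{G\times K}(\Delta(U,\gamma,W)) \times H)$-orbits on $\Gamma_H(U,\gamma,W)$: the $H$-factor absorbs the ambiguity in the choice of the intermediate subgroup $V$ up to $H$-conjugation, while the normalizer acts by the rule given in the Notation preceding the proposition. For a representative triple $(\alpha,V,\beta)$, I would verify that the stabilizer of the corresponding piece is exactly $N_{G\times H}(\Delta(U,\alpha,V)) * N_{H\times K}(\Delta(V,\beta,W))$, and that the $k$-space it contributes is isomorphic to $L[\Delta(U,\alpha,V)] \otimes_{kC_H(V)} M[\Delta(V,\beta,W)]$, with the $C_H(V)$-tensor relation arising as the kernel of the star-product surjection $\nu: X \times_H Y \to X*Y$ (exactly the map used in the character analysis of Lemma~\ref{lem characterofstar}). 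Induction from this stabilizer up to $N_{G\times K}(\Delta(U,\gamma,W))$ then produces one summand of the claimed formula per orbit.

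\textbf{Main obstacle.} The delicate point is this last step: pinning down the stabilizer as a star product of normalizers and correctly identifying the induced space with a $C_H(V)$-tensor product of Brauer quotients. One must carefully track how the conjugations, the $H$-action on triples, and the passage to orbit representatives interact, and check that no unexpected relations or merging of orbits occur. I would expect to follow \cite{BoltjePerepelitsky2020} closely for this bookkeeping, since the natural setting of permutation summands with twisted diagonal stabilizers already carries the needed structure to effect the identification.
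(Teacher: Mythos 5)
The paper itself gives no proof of this proposition: it is quoted directly from \cite[Corollary~7.4(b)]{BoltjePerepelitsky2020}, so the only proof to compare with is the one in that reference, which your outline does parallel in spirit (fixed-point analysis of tensor products of bimodules with twisted diagonal point stabilizers, star-product bookkeeping). Your sketch also correctly locates where the twisted-diagonal hypothesis is used: when $X$ and $Y$ are twisted diagonal, the intermediate element of $H$ linking a point of $X$ to a point of $Y$ is unique, which is what produces a well-defined subgroup $V$ and isomorphisms $\alpha,\beta$ with $\gamma=\alpha\circ\beta$.

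However, as written there is a genuine gap at the very first step. Reducing to $L=k[(G\times H)/X]$, $M=k[(H\times K)/Y]$ ``because both sides commute with direct sums'' is not a valid reduction: an isomorphism established only for the ambient permutation bimodules does not descend to their direct summands, and Krull--Schmidt cancellation would require already knowing the formula for the complementary summands, which is circular. The standard repair -- and what Boltje--Perepelitsky actually do -- is to construct an explicit homomorphism from each
$\Ind_{N_{G\times H}(\Delta(U,\alpha,V))\ast N_{H\times K}(\Delta(V,\beta,W))}^{N_{G\times K}(\Delta(U,\gamma,W))}\bigl(L[\Delta(U,\alpha,V)]\otimes_{kC_H(V)}M[\Delta(V,\beta,W)]\bigr)$
into $(L\otimes_{kH}M)[\Delta(U,\gamma,W)]$ that is \emph{natural} in $L$ and $M$, and only then check bijectivity on permutation bimodules; naturality with respect to the idempotents cutting out the summands is what makes the reduction legitimate, so your plan needs this map made explicit rather than an appeal to additivity. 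Two further corrections: the Mackey decomposition of $k[(G\times H)/X]\otimes_{kH}k[(H\times K)/Y]$ is indexed by double cosets $p_2(X)\backslash H/p_1(Y)$ of the projections, not of the kernels $k_2(X)$, $k_1(Y)$ as you wrote; and the surviving fixed cosets are not in bijection with the $\bigl(N_{G\times K}(\Delta(U,\gamma,W))\times H\bigr)$-orbits of $\Gamma_H(U,\gamma,W)$ -- they merely fiber over these orbits, each fiber supplying the basis of one induced summand (your later sentence about stabilizers treats this correctly, but the bijection claim itself is false). Finally, since the decisive stabilizer and $C_H(V)$-balancing computations are explicitly deferred to the reference, the proposal is an outline of the cited proof rather than an independent argument.
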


We say that $(P,u)$ is a {\em pair} of a finite group $S$, if $P$ is a $p$-subgroup of $S$ and $u$ is a $p'$-element of $N_S(P)$.  Let $(P,u)$ be a pair of $S$ and let $X$ be a $p$-permutation $kS$-module. Then $\tau_{P,u}^S(X)\in \FF$ is defined as the value at $u$ of the Brauer character of $X[P]$ (see~\cite{BoucThevenaz2010} Notation~2.1 and Notation~2.15 for details). \par
In the next proposition, we will consider three finite groups $G$, $H$ and $K$, and we will need to lift $p$-permutation $(kG,kH)$-bimodules and $(kH,kK)$-bimodules from characteristic $p$ to characteristic~0. To do this, we can choose a $p$-modular system $(\KK,\calO,k)$ containing~$k$, where $\calO$ is a complete discrete valuation ring with residue field $k$ and field of fractions $\KK$ of characteristic 0. For a finite group $S$ and a $p$-permutation $kS$-module $X$, there is a unique $p$-permutation $\calO S$-lattice $X^{\calO}$, up to isomorphism, such that $X^{\calO}/J(\calO)X^{\calO}\cong X$. We denote by $X^0$ the $\KK S$-module $\KK\otimes_{\calO} X^{\calO}$. For a $p$-subgroup $P$ of $S$ and a (possibly $p$-singular) element $u$ of $N_S(P)$, we define $\widehat{\tau}_{P,u}^S(X)$ as the value at $u$ of the (ordinary) character of $X[P]^0$. As $\widehat{\tau}_{P,u}^S(X)$ is a sum of roots of unity, we may assume that $\widehat{\tau}_{P,u}^S(X)$ lies in our algebraically closed field $\FF$ of characteristic~0. Moreover $\widehat{\tau}_{P,u}^S(X)=\tau_{P,u}^S(X)$ when $u$ is $p$-regular. With this notation: 
\begin{proposition} \label{prop brauercharacterofinduction}Let $(s,t)\in N_{G\times K}\big(\Delta(U,\gamma,W)\big)$. Let $L$ be a $p$-permutation $(kG,kH)$-bimodule and $M$ a $p$-permutation $(kH,kK)$-bimodule. Suppose that all of the indecomposable summands of $L$ and $M$ have twisted diagonal vertices. Then
	$$\widehat{\tau}_{\Delta(U,\gamma,W),(s,t)}^{G\times K}(L\otimes_{kH}M)\!=\!\frac{1}{|H|}\!\!\!\!\!\sum_{\substack{\rule{0ex}{2ex}(\alpha,V,\beta)\in \Gamma_H(U,\gamma,W),\,h\in H\\\rule{0ex}{1.6ex}(s,h)\in N_{G\times H}(\Delta(U,\alpha,V))\\\rule{0ex}{1.6ex}(h,t)\in N_{H\times K}(\Delta(V,\beta,W))}}\!\!\!\!\!\widehat{\tau}_{\Delta(U,\alpha,V),(s,h)}^{G\times H}(L)\,\widehat{\tau}_{\Delta(V,\beta,W),(h,t)}^{H\times K}(M).$$
\end{proposition}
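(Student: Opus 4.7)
The plan is to lift the Brauer quotient decomposition of Proposition~\ref{Brauer quotient} to characteristic zero, take the ordinary character at $(s,t)$ of both sides, expand it using the standard induction character formula together with Lemma~\ref{lem characterofstar}, and then repackage the resulting orbit sum as the unrestricted sum over $\Gamma_H(U,\gamma,W)$ claimed in the statement.

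For the lift, I would use that the Brauer construction $(-)[\Delta(U,\gamma,W)]$ commutes with $p$-permutation lifting: every step in the proof of Proposition~\ref{Brauer quotient} in \cite{BoltjePerepelitsky2020} is naturally formulated at the level of $p$-permutation lattices, so the isomorphism there is the reduction of an isomorphism of $\calO N_{G\times K}(\Delta(U,\gamma,W))$-lattices. Applying $\KK\otimes_{\calO}-$ gives an isomorphism of $\KK N_{G\times K}(\Delta(U,\gamma,W))$-modules
\begin{equation*}
\bigl((L\otimes_{kH}M)[\Delta(U,\gamma,W)]\bigr)^{0}\cong\bigoplus_{(\alpha,V,\beta)}\Ind\bigl(L[\Delta(U,\alpha,V)]^{0}\otimes_{\KK C_{H}(V)}M[\Delta(V,\beta,W)]^{0}\bigr),
\end{equation*}
where $(\alpha,V,\beta)$ runs over $\tilde{\Gamma}_H(U,\gamma,W)$ and induction is from $N_{G\times H}(\Delta(U,\alpha,V))*N_{H\times K}(\Delta(V,\beta,W))$ to $N_{G\times K}(\Delta(U,\gamma,W))$. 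Taking the ordinary character at $(s,t)$, the induction character formula yields a sum over orbit representatives $(\alpha,V,\beta)$ and over $z\in N_{G\times K}(\Delta(U,\gamma,W))$ with $z^{-1}(s,t)z\in N_{G\times H}(\Delta(U,\alpha,V))*N_{H\times K}(\Delta(V,\beta,W))$, normalized by $|N_{G\times H}(\Delta(U,\alpha,V))*N_{H\times K}(\Delta(V,\beta,W))|^{-1}$. The inner character is then expanded via Lemma~\ref{lem characterofstar} into a $|C_{H}(V)|^{-1}$-scaled sum over $h\in H$ satisfying analogous compatibility conditions, with the character values of $L[\Delta(U,\alpha,V)]^{0}$ and $M[\Delta(V,\beta,W)]^{0}$ being by definition the desired $\widehat{\tau}$-values.

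The main obstacle is the combinatorial bookkeeping needed to convert the resulting ``double sum over orbit representatives and over $z$'' into the ``single unrestricted sum over $\Gamma_H(U,\gamma,W)$ and $h\in H$'' appearing in the statement. The key point is that conjugation by $z=(z_1,z_2)\in N_{G\times K}(\Delta(U,\gamma,W))$ moves $(\alpha,V,\beta)$ within its $N_{G\times K}(\Delta(U,\gamma,W))\times H$-orbit (together with an appropriate action by some $h\in H$), and transforms the compatibility conditions on $(s',h')=z^{-1}(s,t)z$ into the stated conditions on $(s,h)$ and $(h,t)$ after a change of variable in $h$. Using orbit--stabilizer to pass from the sum over $\tilde{\Gamma}_H(U,\gamma,W)$ to the full sum over $\Gamma_H(U,\gamma,W)$ introduces a factor $|\textup{stab}|/|N_{G\times K}(\Delta(U,\gamma,W))\times H|$, which must combine with the two prefactors $|N_{G\times H}*N_{H\times K}|^{-1}$ and $|C_H(V)|^{-1}$ to produce exactly $|H|^{-1}$. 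The cardinality identity $|N_{G\times H}(\Delta(U,\alpha,V))*N_{H\times K}(\Delta(V,\beta,W))|\cdot|C_H(V)|=|N_{G\times H}(\Delta(U,\alpha,V))|\cdot|N_{H\times K}(\Delta(V,\beta,W))|/|C_H(V)|$-type relations for the $*$-product, together with a careful identification of $\mathrm{stab}(\alpha,V,\beta)$ inside $N_{G\times K}(\Delta(U,\gamma,W))\times H$, will produce the desired cancellation and complete the proof.
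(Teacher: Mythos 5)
Your proposal follows essentially the same route as the paper: lift the isomorphism of Proposition~\ref{Brauer quotient} to an isomorphism over $\calO$ (hence of $\KK$-modules), evaluate the ordinary character at $(s,t)$ via the induction character formula, expand the inner character with Lemma~\ref{lem characterofstar}, and convert the sum over $\tilde{\Gamma}_H(U,\gamma,W)$ into the full sum over $\Gamma_H(U,\gamma,W)$ by orbit--stabilizer. The cancellation you defer to the end is exactly the paper's observation that the stabilizer $S_{U,\alpha,V,\beta,W}$ of $(\alpha,V,\beta)$ in $N_{G\times K}(\Delta(U,\gamma,W))\times H$ satisfies $p_1(S_{U,\alpha,V,\beta,W})=N_{G\times H}(\Delta(U,\alpha,V))*N_{H\times K}(\Delta(V,\beta,W))$ and $k_2(S_{U,\alpha,V,\beta,W})=C_H(V)$, so that $|S_{U,\alpha,V,\beta,W}|=|N_{G\times H}(\Delta(U,\alpha,V))*N_{H\times K}(\Delta(V,\beta,W))|\cdot|C_H(V)|$, which yields the factor $1/|H|$.
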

\begin{proof}
We use Proposition~\ref{Brauer quotient}, where we set for simplicity
\begin{align*}
	N_{U,\gamma,W}&=N_{G\times K}\big(\Delta(U,\gamma,W)\big)\\
	N_{U,\alpha,V}&=N_{G\times H}\big(\Delta(U,\alpha,V)\big)\\
	N_{V,\beta,W}&=N_{H\times K}\big(\Delta(V,\beta,W)\big)\\
	N_{U,\alpha,V,\beta,W}&=N_{G\times H}\big(\Delta(U,\alpha,V)\big)*N_{H\times K}\big(\Delta(V,\beta,W)\big).
\end{align*}
We write $\tilde{\Gamma}_{U,\gamma,W}=\tilde{\Gamma}_H(U,\gamma,W)$ for short, for a set of representatives of the orbits of $N_{U,\gamma,W}\times H$-action on $\Gamma_{U,\gamma,W}=\Gamma_H(U,\gamma,W)$. For $(\alpha,V,\beta)\in \Gamma_{U,\gamma,W}$, we denote by $S_{U,\alpha,V,\beta,W}$ the stabilizer of $(\alpha,V,\beta)$ in $N_{U,\gamma,W}\times H$, i.e.,
$$S_{U,\alpha,V,\beta,W}=\big\{\big((x,z),h\big)\in N_{U,\gamma,W}\times H\mid i_x\circ\alpha=\alpha\circ i_h,\;i_h\circ\beta=\beta\circ i_z\big\}.$$
With this notation, we have an isomorphism of $kN_{U,\gamma,W}$-modules
$$(L\otimes_{kH}M)\big[\Delta(U,\gamma,W)\big]\cong \!\!\!\bigoplus_{(\alpha,V,\beta)\in\tilde{\Gamma}_{U,\gamma,W}}\!\!\!\Ind_{N_{U,\alpha,V\beta,W}}^{N_{U,\gamma,W}}\left(L\big[\Delta(U,\alpha,V)\big]\otimes_{k C_H(V)}M\big[\Delta(V,\beta,W)\big]\right).$$
This isomorphism can be lifted to $\calO$, to give an isomorphism of $\FF N_{U,\gamma,W}$-modules
$$(L\otimes_{kH}M)\big[\Delta(U,\gamma,W)\big]^0\cong\!\!\! \bigoplus_{(\alpha,V,\beta)\in\tilde{\Gamma}_{U,\gamma,W}}\!\!\!\Ind_{N_{U,\alpha,V\beta,W}}^{N_{U,\gamma,W}}\left(L\big[\Delta(U,\alpha,V)\big]^0\otimes_{\FF C_H(V)}M\big[\Delta(V,\beta,W)\big]^0\right),$$
and $\widehat{\tau}_{\Delta(U,\gamma,W),(s,t)}^{G\times K}(L\otimes_{kH}M)$ is equal to the trace of $(s,t)$ acting on the right hand side. This implies that
\begin{align*}
	\widehat{\tau}_{\Delta(U,\gamma,W),(s,t)}^{G\times K}(L\otimes_{kH}M)&=\sum_{(\alpha,V,\beta)\in\tilde{\Gamma}_{U,\gamma,W}}\frac{1}{|N_{U,\alpha,V,\beta,W}|}\sum_{\substack{(a,b)\in N_{U,\gamma,W}\\(^as,^bt)\in N_{U,\alpha,V,\beta,W}}}\theta_{U,\alpha,V,\beta,W}(^as,^bt)\\
=&\!\!\!\sum_{(\alpha,V,\beta)\in \Gamma _{U,\gamma,W}}\frac{|S_{U,\alpha,V,\beta,W}|}{|N_{U,\gamma,W}||H||N_{U,\alpha,V,\beta,W}|}\hspace{-3ex}\sum_{\substack{(a,b)\in N_{U,\gamma,W}\\(^as,^bt)\in N_{U,\alpha,V,\beta,W}}}\hspace{-3ex}\theta_{U,\alpha,V,\beta,W}(^as,^bt)
\end{align*}
where $ \theta_{U,\alpha,V,\beta,W}$ is the character of $L\big[\Delta(U,\alpha,V)\big]^0\otimes_{\FF C_H(V)}M\big[\Delta(V,\beta,W)\big]^0$.

Now we observe that $p_1(S_{U,\alpha,V,\beta,W})=N_{U,\alpha,V,\beta,W}$ and $k_2(S_{U,\alpha,V,\beta,W})=C_H(V)$. It follows that 
$$|S_{U,\alpha,V,\beta,W}|=|N_{U,\alpha,V,\beta,W}||C_H(V)|,$$
so
$$\widehat{\tau}_{\Delta(U,\gamma,W),(s,t)}^{G\times K}(L\otimes_{kH}M)=\sum_{(\alpha,V,\beta)\in \Gamma_{U,\gamma,W}}\frac{|C_H(V)|}{|N_{U,\gamma,W}||H|}\hspace{-3ex}\sum_{\substack{(a,b)\in N_{U,\gamma,W}\\({^as},{^bt})\in N_{U,\alpha,V,\beta,W}}}\hspace{-3ex}\theta_{U,\alpha,V,\beta,W}({^as},{^bt}).$$
But for $(a,b)\in N_{U,\gamma,W}$, saying that $({^as},{^bt})\in N_{U,\alpha,V,\beta,W}$ amounts to saying that $(s,t)\in N_{U,\alpha,V,\beta,W}^{(a,b)}=N_{U,i_a^{-1}\circ\alpha,V,\beta\circ i_b,W}$. Moreover $\theta_{U,\alpha,V,\beta,W}({^as},{^bt})=\theta_{U,i_a^{-1}\circ\alpha,V,\beta\circ i_b,W}(s,t)$. So
\begin{align*}
\widehat{\tau}_{\Delta(U,\gamma,W),(s,t)}^{G\times K}(L\otimes_{kH}M)&=\fracb{|C_H(V)|}{|N_{U,\gamma,W}||H|}\sum_{\substack{(a,b)\in N_{U,\gamma,W}\\(i_a^{-1}\circ\alpha,V,\beta\circ i_b)\in \Gamma _{U,\gamma,W}\\(s,t)\in N_{U,i_a^{-1}\circ\alpha,V,\beta\circ i_b,W}}} \theta_{U,i_a^{-1}\circ\alpha,V,\beta\circ i_b,W}(s,t)\\
&=\fracb{|C_H(V)|}{|H|}\sum_{\substack{(\alpha,V,\beta)\in \Gamma_{U,\gamma,W}\\(s,t)\in N_{U,\alpha,V,\beta,W}}} \theta_{U,\alpha,V,\beta,W}(s,t).
\end{align*}
Now by Lemma~\ref{lem characterofstar}
$$\theta_{U,\alpha,V,\beta,W}(s,t)=\fracb{1}{|C_H(V)|}\sum_{\substack{h\in H\\(s,h)\in N_{U,\alpha,V}\\(h,t)\in N_{V,\beta,W}}}\widehat{\tau}_{\Delta(U,\alpha,V),(s,h)}^{G\times H}(L)\,\widehat{\tau}_{\Delta(V,\beta,W),(h,t)}^{H\times K}(M).$$
It follows that
\begin{align*}
\widehat{\tau}_{\Delta(U,\gamma,W),(s,t)}^{G\times K}(L\otimes_{kH}M)&=\frac{1}{|H|}\sum_{\substack{(\alpha,V,\beta)\in\Gamma_{U,\gamma,W}\\h\in H\\(s,h)\in {N_{U,\alpha,V}}\\(h,t)\in {N_{V,\beta,W}}}} \widehat{\tau}_{\Delta(U,\alpha,V),(s,h)}^{G\times H}(L)\,\widehat{\tau}_{\Delta(V,\beta,W),(h,t)}^{H\times K}(M)\,. 
\end{align*}

\end{proof}

\begin{corollary}\label{cor brauercharacteroftensor}  Let $L$ be a $p$-permutation $(kG,kH)$-bimodule and let $M$ be a $p$-permutation $(kH,kK)$-bimodule. Suppose that all of the indecomposable summands of $L$ and $M$ have twisted diagonal vertices. Then, for any diagonal pair $\big(\Delta(U,\gamma,W),(s,t)\big)$ of $G\times K$
$$\tau_{\Delta(U,\gamma,W),(s,t)}^{G\times K}(L\otimes_{kH}M)\!=\!\frac{1}{|H|}\!\!\!\!\!\sum_{\substack{\rule{0ex}{2ex}(\alpha,V,\beta)\in \Gamma_H(U,\gamma,W),\,h\in H_{p'}\\\rule{0ex}{1.6ex}(s,h)\in N_{G\times H}(\Delta(U,\alpha,V))\\\rule{0ex}{1.6ex}(h,t)\in N_{H\times K}(\Delta(V,\beta,W))}}\!\!\!\!\!\tau_{\Delta(U,\alpha,V),(s,h)}^{G\times H}(L)\,\tau_{\Delta(V,\beta,W),(h,t)}^{H\times K}(M),$$
where $H_{p'}$ is the set of $p'$-elements of $H$.
\end{corollary}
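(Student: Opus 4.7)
The plan is to deduce this from Proposition~\ref{prop brauercharacterofinduction} by showing that only $p'$-elements $h$ of $H$ contribute nontrivially to the sum on the right. Since a diagonal pair $(\Delta(U,\gamma,W),(s,t))$ involves a $p'$-element $(s,t)$ of the normaliser in $G\times K$, its components $s$ and $t$ are $p$-regular in $G$ and $K$ respectively, so $\widehat{\tau}_{\Delta(U,\gamma,W),(s,t)}^{G\times K}=\tau_{\Delta(U,\gamma,W),(s,t)}^{G\times K}$ on the left-hand side, and the question is only how to convert the right-hand side of Proposition~\ref{prop brauercharacterofinduction} into the claimed form.

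For this I would invoke the standard identity, valid for any $p$-permutation $kS$-module $X$ and any $u\in N_S(P)$,
$$\widehat{\tau}_{P,u}^S(X)=\tau_{P\langle u_p\rangle,\,u_{p'}}^S(X),$$
where $u=u_pu_{p'}$ is the decomposition into $p$- and $p'$-parts. This comes from applying the well-known formula $\chi_{X^0}(\sigma\rho)=\chi_{X[\langle\sigma\rangle]^0}(\rho)$ for a $p$-permutation module $X$ with commuting $p$-element $\sigma$ and $p'$-element $\rho$, to the $p$-permutation module $X[P]$ viewed in $\lmod{k N_S(P)/P}$ (see~\cite{BoucThevenaz2010}). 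Writing $h=h_ph_{p'}$ and using that $s$ is $p$-regular, so that $(s,h)_p=(1,h_p)$ and $(s,h)_{p'}=(s,h_{p'})$, this yields
$$\widehat{\tau}_{\Delta(U,\alpha,V),(s,h)}^{G\times H}(L)=\tau_{\Delta(U,\alpha,V)\langle(1,h_p)\rangle,\,(s,h_{p'})}^{G\times H}(L),$$
and symmetrically for the $M$-factor.

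The vanishing step is then immediate: if $h_p\neq 1$, the subgroup $\Delta(U,\alpha,V)\langle(1,h_p)\rangle$ of $G\times H$ contains $(1,h_p)\neq(1,1)$, so it meets $\{1\}\times H$ nontrivially and is therefore not a twisted diagonal subgroup; neither is any of its $G\times H$-conjugates (nor any overgroup). As every indecomposable summand of $L$ has a twisted diagonal vertex, the Brauer quotient $L\big[\Delta(U,\alpha,V)\langle(1,h_p)\rangle\big]$ vanishes, so the corresponding $\tau$-value vanishes; the same argument disposes of the $M$-factor. Hence only $h\in H_{p'}$ survive in the sum, and for such $h$ the elements $(s,h)$ and $(h,t)$ are $p$-regular, so $\widehat{\tau}$ coincides with $\tau$ on both factors and Proposition~\ref{prop brauercharacterofinduction} reduces to the claim. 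The only nontrivial input is the reduction identity $\widehat{\tau}_{P,u}^S(X)=\tau_{P\langle u_p\rangle,\,u_{p'}}^S(X)$, which is the main technical point to pin down cleanly; everything else is routine bookkeeping.
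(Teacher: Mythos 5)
Your proof is correct, and it follows the same overall reduction as the paper: start from Proposition~\ref{prop brauercharacterofinduction}, show that terms with $h\notin H_{p'}$ vanish, and then replace $\widehat{\tau}$ by $\tau$ since all elements involved are then $p$-regular. The only difference lies in how the vanishing for $h_p\neq 1$ is justified. The paper applies \cite[Theorem~4.7.4]{NagaoTsushima1989} directly to the lifted modules $L[\Delta(U,\alpha,V)]^0$, whose indecomposable summands have twisted diagonal vertices: the ordinary character then vanishes at $(s,h)$ unless the $p$-part $(1,h_p)$ is contained in a twisted diagonal $p$-subgroup, forcing $h_p=1$. You instead use the reduction identity $\widehat{\tau}^S_{P,u}(X)=\tau^S_{P\langle u_p\rangle,\,u_{p'}}(X)$ for $p$-permutation modules (a consequence of $\chi_{X^0}(\sigma\rho)=\chi_{X[\langle\sigma\rangle]^0}(\rho)$ applied to $X[P]$, together with transitivity of the Brauer construction), and then observe that $\Delta(U,\alpha,V)\langle(1,h_p)\rangle$ meets $1\times H$ nontrivially, so the corresponding Brauer quotient of $L$ is zero. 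Both routes are sound; yours stays entirely within the $p$-permutation formalism (the identity you invoke is essentially the $p$-permutation refinement of the Nagao--Tsushima vanishing theorem, found e.g.\ in \cite{Broue1985} and \cite{BoucThevenaz2010}), at the cost of having to verify the small bookkeeping points you note: that $(1,h_p)$ and $(s,h_{p'})$ are the $p$- and $p'$-parts of $(s,h)$, that they normalize $\Delta(U,\alpha,V)$, and that no conjugate of a subgroup meeting $1\times H$ nontrivially lies in a twisted diagonal subgroup. The paper's citation of Nagao--Tsushima is slightly more direct, but carries the same content.
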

\begin{proof}
For $(\alpha,V,\beta)\in\Gamma_{H}(U,\gamma,W)$, the indecomposable direct summands of the $\FF N_{U,\alpha,V}$-module $L[\Delta(U,\alpha,V)]^0$ have twisted diagonal vertices. Similarly, the indecomposable direct summands of the $\FF N_{V,\beta, W}$-module $M[\Delta(V,\beta, W)]^0$ have twisted diagonal vertices. Now assume that $(s,h)\in N_{U,\alpha,V}$ is such that $\widehat{\tau}_{\Delta(U,\alpha,V),(s,h)}^{G\times H}(L)\neq 0$. Then by \cite[Theorem~4.7.4]{NagaoTsushima1989}, the $p$-part of the element $(s,h)$ is contained in a twisted diagonal $p$-subgroup of $G\times H$. Since $s$ is a $p'$-element, this means that $h$ is a $p'$-element as well. Therefore the summation over $h\in H$ in Proposition~\ref{prop brauercharacterofinduction} reduces to a summation over $h\in H_{p'}$, and then both $(s,h)$ and $(h,t)$ are $p'$-elements, so we can replace $\widehat{\tau}$ by $\tau$ throughout.
\end{proof}

\section{Pairs}\label{sec pairs}
 
Recall that $k$ denotes an algebraically closed field of characteristic $p$ and $\FF$ denotes an algebraically closed field of characteristic zero.  Also, $G$, $H$ and $K$ denote finite groups.  

\begin{nothing}
{\rm (a)} We denote by $T(G)$ the Grothendieck group of $p$-permutation $kG$-modules.  Let $\calQ_{G,p}$ denote the set of pairs $(P,u)$ where $P$ is a $p$-subgroup of $G$ and $u$ is a $p'$-element of $N_G(P)$. The group $G$ acts on the set $\calQ_{G,p}$ via conjugation and we write $[\calQ_{G,p}]$ for a set of representatives of $G$-orbits on $\calQ_{G,p}$. \par
Let $M$ be a $p$-permutation $kG$-module. Recall from Section~\ref{sec Brauercharacterformula} that $\tau^G_{P,u}(M)$ is defined as the value at $u$ of the Brauer character of $M[P]$. We extend this $\FF$-linearly to obtain a map $\tau^G_{P,u}$ from $\FF T(G):=\FF\otimes_{\ZZ} T(G)$ to $\FF$. It is well known that the set of maps $\tau_{P,u}$, for $(P,u)\in [\calQ_{G,p}]$,  is the set of all distinct $\FF$-algebra homomorphisms from $\FF T(G)$ to $\FF$ (see e.g. Proposition 2.18 of \cite{BoucThevenaz2010} for details), and hence the primitive idempotents $F^G_{P,u}$ of $\FF T(G)$ are indexed by $[\calQ_{G,p}]$. The idempotent $F^G_{P,u}$ is defined by the property that for any $(Q,t)\in [\calQ_{G,p}]$,
\begin{equation*}
\tau^G_{Q,t}(F^G_{P,u}) = \left \{
  \begin{aligned}
    &1 && \text{if}\ (Q,t)=_G (P,u),\\
    &0 && \text{otherwise}\,.
  \end{aligned} \right.
\end{equation*}

\smallskip
{\rm (b)} More generally, we often consider pairs $(P,s)$ where $P$ is a $p$-group and $s$ is a generator of a $p'$-group acting on $P$. We write $P\langle s\rangle := P\rtimes \langle s\rangle$ for the corresponding semi-direct product. We say that two pairs $(P,s)$ and $(Q,t)$ are {\em isomorphic} and write $(P,s)\cong (Q,t)$, if there is a group isomorphism $f: P\langle s\rangle \to Q\langle t\rangle$ that sends $s$ to a conjugate of $t$.  The following type of pairs will play a crucial role in this paper.

\begin{definition}\cite{BoucYilmaz2020}
A pair $(P,s)$ is called a {\em $\DD$-pair}, if $C_{\langle s\rangle}(P)=1$.
\end{definition}

See \cite[Proposition~5.6]{BoucYilmaz2020} for more properties of $\DD$-pairs. Note that for an arbitrary pair $(P,s)$, the pair $(\tilde{P},\tilde{s}):=(PC_{\langle s\rangle}(P)/C_{\langle s\rangle}(P), sC_{\langle s\rangle}(P))$ is a $\DD$-pair.  
\end{nothing}

\begin{lemma}\label{lem pairsofdirectproduct}
{\rm (i)} Let $\big(\Delta(U,\gamma, W), (s,t)\big)$ be a pair of $G\times K$. Then we have $(\tilde{U},\tilde{s})\cong (\tilde{W},\tilde{t})$.

\smallskip

{\rm (ii)} Let $(U,s)$ be a pair of $G$ and let $(W, t)$ be a pair of $K$. Suppose that $(U,s)\cong (W,t)$. Then $\big(\Delta(U,\gamma, W), (s,t)\big)$ is a pair of $G\times K$ for some group isomorphism $\gamma: W\to U$.

\smallskip

{\rm (iii)} Let $(U,s)$ be a pair of $G$ and let $(W, t)$ be a pair of $K$. Assume that we have $(\tilde{U},\tilde{s})\cong (\tilde{W},\tilde{t})$. Then $\left(\Delta(U,\gamma, W), (s,t)\right)$ is a pair of $G\times K$ for some group isomorphism $\gamma: W\to U$.
\end{lemma}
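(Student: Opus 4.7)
My plan is to build all three parts on the single observation that $(s,t)\in G\times K$ normalizes $\Delta(U,\gamma,W)$ precisely when $s\in N_G(U)$, $t\in N_K(W)$, and the two conjugation actions satisfy $i_s\circ\gamma=\gamma\circ i_t$ on $W$. Once this is in hand, (ii) becomes a construction of $\gamma$ from an isomorphism of pairs, (i) is a ``quotient-out-the-centralizer'' statement, and (iii) follows by combining (ii) with the passage from $(U,s)$ to $(\tilde U,\tilde s)$.

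For (ii), starting from an isomorphism $f\colon U\langle s\rangle\to W\langle t\rangle$ with $f(s)=hth^{-1}$ for some $h\in W\langle t\rangle$, I will postcompose $f$ with the inner automorphism $i_{h^{-1}}$ of $W\langle t\rangle$ to obtain an isomorphism $g\colon U\langle s\rangle\to W\langle t\rangle$ with $g(s)=t$. Because $s$ and $t$ are $p'$-elements, $U$ and $W$ are the unique Sylow $p$-subgroups of $U\langle s\rangle$ and $W\langle t\rangle$, so $g$ restricts to an isomorphism $U\to W$; the desired map is $\gamma:=(g|_U)^{-1}\colon W\to U$, and the intertwining relation falls out of $g(sus^{-1})=t\,g(u)\,t^{-1}$.

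For (i), the intertwining relation $i_s\circ\gamma=\gamma\circ i_t$ is immediate from the normalization hypothesis applied to elements $(\gamma(w),w)\in\Delta(U,\gamma,W)$. The key second step is to use the canonical isomorphism $U\cong\tilde U$, $u\mapsto uC_{\langle s\rangle}(U)$ (justified by $U\cap C_{\langle s\rangle}(U)=1$), under which the action of $s$ on $U$ corresponds to the action of $\tilde s$ on $\tilde U$; the analogous statement holds on the $W$-side. This transports $\gamma$ to a map $\tilde\gamma\colon\tilde W\to\tilde U$ still intertwining the two actions. Since $(\tilde U,\tilde s)$ and $(\tilde W,\tilde t)$ are $\DD$-pairs, the orders of $\tilde s$ and $\tilde t$ equal the orders of their actions on $\tilde U$ and $\tilde W$, and these actions have the same order via $\tilde\gamma$; extending $\tilde\gamma$ by $\tilde t\mapsto\tilde s$ therefore yields a well-defined group isomorphism $\tilde W\langle\tilde t\rangle\to\tilde U\langle\tilde s\rangle$, witnessing $(\tilde U,\tilde s)\cong(\tilde W,\tilde t)$.

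Part (iii) is then a short combination of (i) and (ii): apply (ii) to the isomorphic $\DD$-pairs $(\tilde U,\tilde s)\cong(\tilde W,\tilde t)$ to obtain $\tilde\gamma\colon\tilde W\to\tilde U$ with $i_{\tilde s}\circ\tilde\gamma=\tilde\gamma\circ i_{\tilde t}$, and pull it back along the canonical bijections $U\cong\tilde U$ and $W\cong\tilde W$ to obtain the required $\gamma\colon W\to U$. I expect no substantive obstacle; the one point that repays care is the identification of the $s$-action on $U$ with the $\tilde s$-action on $\tilde U$, which underlies the passage between the pair and its $\DD$-quotient in both (i) and (iii).
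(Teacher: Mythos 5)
Your proposal is correct and follows essentially the same route as the paper: part (ii) by adjusting the pair isomorphism with an inner automorphism so that $s$ is matched exactly with $t$ and then restricting to the normal Sylow $p$-subgroups, and part (iii) by applying (ii) to the associated $\DD$-pairs and transporting $\tilde\gamma$ back along the canonical identifications $U\cong\tilde U$, $W\cong\tilde W$. The only cosmetic difference is in (i), where the paper identifies the cyclic parts via a Goursat-type analysis of $N_{\langle s\rangle\times\langle t\rangle}\big(\Delta(U,\gamma,W)\big)$, giving $\langle t\rangle/C_{\langle t\rangle}(W)\cong\langle s\rangle/C_{\langle s\rangle}(U)$ with generators corresponding, whereas you deduce $|\tilde s|=|\tilde t|$ from faithfulness of the $\DD$-pair actions before gluing with $\tilde\gamma$ --- the same content in different packaging.
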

\begin{proof}
{\rm(i)} The element $(s,t)$ normalizes the group $\Delta(U,\gamma,W)$ means that $s$ normalizes $U$, $t$ normalizes $W$, and $\gamma(\lexp{t}u)=\lexp{s}\gamma(u)$ for all $u\in U$. Set $X:=N_{\langle s\rangle\times \langle t\rangle}(\Delta(U,\gamma,W))\le \langle s\rangle\times\langle t\rangle$. Then we have
\begin{align*}
p_1(X)=\langle s\rangle, \quad p_2(X)=\langle t\rangle, \quad k_1(X)=C_{\langle s\rangle}(U), \quad k_2(X)=C_{\langle t\rangle}(W)\,.
\end{align*}
Hence we have a group isomorphism
\begin{align*}
\eta:  \langle t\rangle/C_{\langle t\rangle}(W)\to \langle s\rangle /C_{\langle s\rangle}(U)
\end{align*}
that sends $tC_{\langle t\rangle}(W)$ to $sC_{\langle s\rangle}(U)$. The map
\begin{align*}
\theta: W\langle t\rangle/C_{\langle t\rangle}(W)\to U\langle s\rangle/C_{\langle s\rangle}(U)
\end{align*}
defined as $\theta(w t^iC_{\langle t\rangle}(W)):=\gamma(t)\eta(t^iC_{\langle t\rangle}(W))$ is an isomorphism that sends  $tC_{\langle t\rangle}(W)$ to $sC_{\langle s\rangle}(U)$. Hence the pairs $\left(UC_{\langle s\rangle}(U)/C_{\langle s\rangle}(U), sC_{\langle s\rangle}(U)\right)$ and $\left(WC_{\langle t\rangle}(W)/C_{\langle t\rangle}(W), tC_{\langle t\rangle}(W)\right)$ are isomorphic. This proves the claim. 

\smallskip

{\rm (ii)} Let $f:W\langle t\rangle \to U\langle s\rangle$ be a group isomorphism that sends $t$ to a conjugate of $s$. Let $u\in U$ be an element with the property that $f(t)=usu^{-1}$. Let $i_{u^{-1}}$ denote the automorphism of $U$ induced by conjugation with $u^{-1}$ and define $\gamma= i_{u^{-1}}\circ f :W\to U$. We have
\begin{align*}
\gamma(\lexp{t}w)=u^{-1}f(\lexp{t}w)u=u^{-1}f(t)f(w)f(t^{-1})u=su^{-1}f(w)us^{-1}=\lexp{s}\gamma(w)
\end{align*}
for all $w\in W$. This shows that $\left(\Delta(U,\gamma, W), (s,t)\right)$ is a pair of $G\times K$.

\smallskip

{\rm (iii)} By part (ii) there exists a group isomorphism 
\begin{align*}
\tilde{\gamma}: WC_{\langle t\rangle}(W)/C_{\langle t \rangle}(W)\to UC_{\langle s\rangle}(U)/C_{\langle s\rangle}(U)
\end{align*}
with the property that $\tilde{\gamma}(\lexp{\bar{t}}{\bar{w}})=\lexp{\bar{s}}{\tilde{\gamma}(\bar{w})}$ for all $\bar{w}\in WC_{\langle t\rangle}(W)/C_{\langle t \rangle}(W)$. The map $\tilde{\gamma}$ induces an isomorphism $\gamma:W\to U$ with the property that  $\gamma(\lexp{t}w)=\lexp{s}\gamma(w)$ for all $w\in W$. This means that $\left(\Delta(U,\gamma, W), (s,t)\right)$ is a pair of $G\times K$.
\end{proof}

\begin{notation}
 For any $p$-permutation $kG$-module $W$, we set $\widetilde{W}:=\Ind_{\Delta G}^{G\times G} W$. This defines an algebra homomorphism from $\FFT(G)$ to $\FFTD(G,G)$. 
\end{notation}

\begin{Remark}\label{rem conjugatepairs}
Let $(P,r)$ be a pair of $G$ and let $\big(\Delta(U,\alpha, V), (s,z)\big)$ be a diagonal pair of $G\times G$. Then we have
\begin{equation*}
\tau^{G\times G}_{\Delta(U,\alpha, V), (s,z)}(\widetilde{F^G_{P,r}}) = \left \{
  \begin{aligned}
    &\vert C_G\left(P\langle r\rangle\right)\vert, && \text{if}\ \big(\Delta(U,\alpha, V), (s,z)\big)=_{G\times G} \big(\Delta(P), (r,r)\big) \\
    &0, && \text{otherwise}\,.
  \end{aligned} \right.
\end{equation*}
Indeed, after identifying the group $G$ with $\Delta G$, we can also identify $F^G_{P,r}\in \FFT(G)$ via $F^{\Delta G}_{\Delta P, (r,r)} \in \FFT(\Delta G)$. Therefore by \cite[Proposition~3.2]{BoucThevenaz2010} we have
\begin{align*}
\Ind^{G\times G}_{\Delta G} F^{\Delta G}_{\Delta P, (r,r)}= | C_G\left(P\langle r\rangle\right)| \cdot F^{G\times G}_{\Delta P, (r,r)}
\end{align*}
which implies the equality above.
\end{Remark}

\begin{lemma}\label{lem scalarmultiple}
Let $\big(\Delta(P,\gamma, Q), (r,u)\big)$ be a pair of $G\times K$. Then 
\begin{align*}
\widetilde{F^G_{P,r}}\otimes_{kG} F^{G\times K}_{\Delta(P,\gamma, Q), (r,u)}\otimes_{kK} \widetilde{F^K_{Q,u}}= F^{G\times K}_{\Delta(P,\gamma, Q), (r,u)} \quad \text{in} \quad \FFTD(G,K)\,.
\end{align*}
\end{lemma}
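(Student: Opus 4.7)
The plan is to verify the identity by evaluating both sides on the linear forms $\tau^{G\times K}_{\Delta(U,\delta,W),(s,t)}$ indexed by representatives of the $(G\times K)$-orbits of diagonal pairs of $G\times K$. Since the primitive idempotents $F^{G\times K}_{\Delta(U,\delta,W),(s,t)}$ form a basis of $\FFTD(G,K)$ and these $\tau$-forms constitute the corresponding dual basis (each idempotent being characterized by having $\tau$-value $1$ on its own orbit and $0$ elsewhere), it suffices to check that the $\tau^{G\times K}_{\Delta(U,\delta,W),(s,t)}$-values of the LHS match those of $F^{G\times K}_{\Delta(P,\gamma,Q),(r,u)}$.

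First I would apply Remark~\ref{rem conjugatepairs} to rewrite the outer factors: $\widetilde{F^G_{P,r}} = |C_G(P\langle r\rangle)|\cdot F^{G\times G}_{\Delta P,(r,r)}$ and $\widetilde{F^K_{Q,u}} = |C_K(Q\langle u\rangle)|\cdot F^{K\times K}_{\Delta Q,(u,u)}$. Next, for an arbitrary diagonal pair $(\Delta(U,\delta,W),(s,t))$ of $G\times K$, I would compute $\tau^{G\times K}_{\Delta(U,\delta,W),(s,t)}$ of the LHS by applying Corollary~\ref{cor brauercharacteroftensor} twice, once for the tensor product over $kG$ and once for the tensor product over $kK$. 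This expresses the $\tau$-value as a double sum over intermediate triples $(\alpha,V,\beta)$ on the $G$-side, $(\alpha',V',\beta')$ on the $K$-side, and $p'$-elements $g\in G$, $h\in K$, of a product of three $\tau$-values.

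Since the $\tau$-values of $F^{G\times G}_{\Delta P,(r,r)}$ and $F^{K\times K}_{\Delta Q,(u,u)}$ are characteristic functions of their respective single $(G\times G)$- and $(K\times K)$-orbits, the surviving summands are exactly those in which the intermediate data on the $G$-side is $(G\times G)$-conjugate to $(\Delta P,(r,r))$, and likewise on the $K$-side. Parametrizing the surviving contributions by pairs of conjugating elements in $G$ and $K$, one reduces the double sum to an expression proportional to $\tau^{G\times K}_{\Delta(U,\delta,W),(s,t)}\bigl(F^{G\times K}_{\Delta(P,\gamma,Q),(r,u)}\bigr)$, which vanishes unless $(\Delta(U,\delta,W),(s,t))=_{G\times K}(\Delta(P,\gamma,Q),(r,u))$.

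The main obstacle will be the combinatorial bookkeeping in the last step: checking that the scalars $|C_G(P\langle r\rangle)|$ and $|C_K(Q\langle u\rangle)|$ from the rewriting of the outer factors, together with the factors $\tfrac{1}{|G|}$ and $\tfrac{1}{|K|}$ from the two applications of Corollary~\ref{cor brauercharacteroftensor}, and the counting of admissible conjugators (parametrized modulo the stabilizers $C_G(P\langle r\rangle)$ and $C_K(Q\langle u\rangle)$) precisely cancel to yield the value $1$ on the surviving orbit. Once this cancellation is verified, the $\tau$-values of both sides agree on every orbit representative, and the lemma follows.
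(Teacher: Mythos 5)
Your setup coincides with the paper's: evaluate against the forms $\tau^{G\times K}_{\Delta(U,\delta,W),(s,t)}$, apply Corollary~\ref{cor brauercharacteroftensor} twice, and use Remark~\ref{rem conjugatepairs} to see that only terms conjugate to the given data survive, so that the left-hand side is supported on the single orbit of $\big(\Delta(P,\gamma,Q),(r,u)\big)$, i.e.\ equals $\lambda\cdot F^{G\times K}_{\Delta(P,\gamma,Q),(r,u)}$ for some scalar $\lambda$. The genuine gap is the last step: you assert, but do not prove, that the counting of admissible conjugators makes $\lambda=1$, and you flag this yourself as ``the main obstacle.'' This is precisely the hard point, and it is not a routine cancellation: the analogous explicit computation in Lemma~\ref{lem tensorprodofidempotents} produces the scalar $1/|Z(L\langle u\rangle)|\neq 1$, so scalars arising from such products of idempotents do not automatically come out to $1$. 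Moreover your proposed parametrization ``modulo the stabilizers $C_G(P\langle r\rangle)$ and $C_K(Q\langle u\rangle)$'' is too coarse: the conjugators realizing $\big(\Delta(P,\alpha,V),(r,g)\big)=_{G\times G}\big(\Delta P,(r,r)\big)$ form a coset of the stabilizer of $\big(\Delta P,(r,r)\big)$ in $G\times G$, whose order is $|N_G(P)\cap C_G(r)|\cdot|C_G(P\langle r\rangle)|$, and one must simultaneously count the surviving triples $(\alpha,V,\beta)$ and elements $g$ (resp.\ the data on the $K$-side), so the bookkeeping as described would not simply collapse to the factors you list.

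The paper closes exactly this gap without any counting: once one knows that the left-hand side equals $\lambda\cdot F^{G\times K}_{\Delta(P,\gamma,Q),(r,u)}$ with $\lambda\neq 0$ (nonvanishing on the matching orbit follows because all surviving summands in the double application of Corollary~\ref{cor brauercharacteroftensor} are nonnegative and at least one, e.g.\ $V=P$, $\alpha=\mathrm{id}$, $\beta=\gamma$, $g=r$, is positive), one multiplies the identity again by $\widetilde{F^G_{P,r}}$ on the left and $\widetilde{F^K_{Q,u}}$ on the right; since these are idempotents this gives $\lambda^2=\lambda$, hence $\lambda=1$. If you want to keep your direct-computation route you must actually carry out the count (with the correct stabilizers), or else adopt this idempotent argument to determine the scalar.
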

\begin{proof}
Let $\big(\Delta(P',\gamma', Q'), (r',u')\big)$ be a pair of $G\times K$. Then by Corollary \ref{cor brauercharacteroftensor}, 
\begin{align*}
\tau^{G\times K}_{\Delta(P',\gamma', Q'), (r',u')}\big(\widetilde{F^G_{P,r}}\otimes_{kG} F^{G\times K}_{\Delta(P,\gamma, Q), (r,u)}\otimes_{kK} \widetilde{F^K_{Q,u}}\big)
\end{align*}
is equal to
\begin{align*}
\frac{1}{|G|}\!\!\!\!\!\sum_{\substack{\rule{0ex}{2ex}(\alpha,V,\beta)\in \Gamma_G(P',\gamma',Q'),\,g\in G_{p'}\\\rule{0ex}{1.6ex}(r',g)\in N_{G\times G}(\Delta(P',\alpha,V))\\\rule{0ex}{1.6ex}(g,u')\in N_{G\times K}(\Delta(V,\beta,Q'))}}\!\!\!\!\!\tau_{\Delta(P',\alpha,V),(r',g)}^{G\times G}(\widetilde{F^G_{P,r}})\,\tau_{\Delta(V,\beta,Q'),(g,u')}^{G\times K}(F^{G\times K}_{\Delta(P,\gamma, Q), (r,u)}\otimes_{kK} \widetilde{F^K_{Q,u}})\,.
\end{align*}
By Remark \ref{rem conjugatepairs}, the evaluation $\tau_{\Delta(P',\alpha,V),(r',g)}^{G\times G}(\widetilde{F^G_{P,r}})$ is non-zero if and only if $\big(\Delta(P',\alpha,V),(r',g)\big)=_{G\times G} \big(\Delta P, (r,r)\big)$ if and only if there exists $(g_1, g_2)\in G\times G$ such that $\big(\Delta(\lexp{g_1}P', i_{g_1}\alpha i_{g_2}^{-1}, \lexp{g_2}V),(\lexp{g_1}r',\lexp{g_2}g)\big)= \big(\Delta P, (r,r)\big)$. Hence the above sum can be written as
\begin{align*}
\frac{1}{n|G|}\!\!\!\!\!\sum_{\substack{\rule{0ex}{2ex}(g_1, g_2)\in G\times G, \\\rule{0ex}{1.6ex}\lexp{g_1}r'=r \\\rule{0ex}{1.6ex}(\lexp{g_2^{-1}}{r}, u')\in N_{G\times K}(\Delta(V,\beta,Q'))}}\!\!\!\!\! |C_G(P\langle r\rangle)| \,\tau_{\Delta(\lexp{g_2^{-1}}{P}, i_{g_2}^{-1} i_{g_1}\gamma',Q'),(\lexp{g_2^{-1}}r, u')}^{G\times K}(F^{G\times K}_{\Delta(P,\gamma, Q), (r,u)}\otimes_{kK} \widetilde{F^K_{Q,u}})\,,
\end{align*}
where $n$ is the number of pairs $(g_1,g_2)$ which satisfy the conditions above when $\alpha, V,\beta$ and $g$ are fixed. 
Now again by Corollary \ref{cor brauercharacteroftensor}, the element
\begin{align*}
\tau_{\Delta(\lexp{g_2^{-1}}{P}, i_{g_2}^{-1} i_{g_1}\gamma',Q'),(\lexp{g_2^{-1}}r, u')}^{G\times K}(F^{G\times K}_{\Delta(P,\gamma, Q), (r,u)}\otimes_{kK} \widetilde{F^K_{Q,u}})
\end{align*}
is equal to
\begin{align*}
\frac{1}{|K|}\!\!\!\!\!\sum_{\substack{\rule{0ex}{2ex}(\phi,Y,\psi)\in \Gamma_K(\lexp{g_2^{-1}}{P},  i_{g_2}^{-1} i_{g_1}\gamma', Q') \\\rule{0ex}{1.6ex}k\in K_{p'}\\\rule{0ex}{1.6ex}(\lexp{g_2^{-1}}r,k)\in N_{G\times K}(\Delta(\lexp{g_2^{-1}}{P},\phi,Y))\\\rule{0ex}{1.6ex}(k,u')\in N_{K\times K}(\Delta(Y,\psi,Q'))}}\!\!\!\!\!\tau_{\Delta(\lexp{g_2^{-1}}{P},\phi,Y),(\lexp{g_2^{-1}}r,k)}^{G\times K}(F^{G\times K}_{\Delta(P,\gamma, Q), (r,u)})\,\tau_{\Delta(Y,\psi,Q'),(k,u')}^{K\times K}(\widetilde{F^K_{Q,u}})\,.
\end{align*}
Therefore, the element
\begin{align*}
\tau_{\Delta(\lexp{g_2^{-1}}{P}, i_{g_2}^{-1} i_{g_1}\gamma',Q'),(\lexp{g_2^{-1}}r, u')}^{G\times K}(F^{G\times K}_{\Delta(P,\gamma, Q), (r,u)}\otimes_{kK} \widetilde{F^K_{Q,u}})
\end{align*}
is non-zero if and only if there exists $(\phi,Y,\psi)\in \Gamma_K(\lexp{g_2^{-1}}{P}, i_{g_2}^{-1} i_{g_1}\gamma',Q')$ such that 
\begin{align}\label{eqn condition1}
\big(\Delta(\lexp{g_2^{-1}}{P},\phi,Y),(\lexp{g_2^{-1}}r,k)\big)=_{G\times K} \big(\Delta(P,\gamma, Q),(r,u)\big) 
\end{align}
and such that
\begin{align*}
\big(\Delta(Y,\psi, Q'), (k,u')\big)=_{K\times K} \big(\Delta Q, (u,u)\big)\,.
\end{align*}
The latter implies that there exists $(k_1, k_2)\in K\times K$ such that
\begin{align*}
\big(\Delta(\lexp{k_1}Y,i_{k_1}\psi i_{k_2}^{-1}, \lexp{k_2}Q'), (\lexp{k_1}k,\lexp{k_2}u')\big)= \big(\Delta Q, (u,u)\big)\,.
\end{align*}
This, in particular, implies that $\psi=i_{k_1}^{-1}i_{k_2}$ and hence that $\phi=i_{g_2}^{-1}i_{g_1}\gamma' i_{k_2}^{-1}i_{k_1}$. Therefore the statement (\ref{eqn condition1}) is equivalent to
\begin{align*}
\big(\Delta(\lexp{g_2^{-1}g_1}{P'},i_{k_1}^{-1}i_{k_2}\gamma' i_{g_1}^{-1}i_{g_2}, \lexp{k_1^{-1}k_2}Q'),(\lexp{g_2^{-1}g_1}r',\lexp{k_1^{-1}k_2}u')\big)=_{G\times K} \big(\Delta(P,\gamma, Q),(r,u)\big) 
\end{align*}
which is equivalent to 
\begin{align*}
\lexp{(g_2^{-1}g_1, k_1^{-1}k_2)}{\big(\Delta(P',\gamma',Q'), (r',u')\big)}=_{G\times K} \big(\Delta(P,\gamma, Q),(r,u)\big)\,.
\end{align*}
This is clearly equivalent to 
\begin{align*}
\big(\Delta(P',\gamma',Q'), (r',u')\big)=_{G\times K} \big(\Delta(P,\gamma, Q),(r,u)\big)\,.
\end{align*}
This shows that the element $\widetilde{F^G_{P,r}}\otimes_{kG} F^{G\times K}_{\Delta(P,\gamma, Q), (r,u)}\otimes_{kK} \widetilde{F^K_{Q,u}}$ is a non-zero scalar multiple of the idempotent $F^{G\times K}_{\Delta(P,\gamma, Q), (r,u)}$, i.e.,
\begin{align*}
\widetilde{F^G_{P,r}}\otimes_{kG} F^{G\times K}_{\Delta(P,\gamma, Q), (r,u)}\otimes_{kK} \widetilde{F^K_{Q,u}}= \lambda\cdot F^{G\times K}_{\Delta(P,\gamma, Q), (r,u)}
\end{align*}
for some non-zero $\lambda\in\FF$. But multiplying by $\widetilde{F^K_{Q,u}}$ from the right, and by $\widetilde{F^G_{P,r}}$ from the left implies that $\lambda^2=\lambda$. Hence $\lambda=1$ and the result follows.
\end{proof}

The following result will be used in Section \ref{sec semisimplicity}. 
\begin{theorem}\label{thm keypoint}
Let $(P,r)$ be a pair of $G$ and $(Q,u)$ a pair of $K$. Then there exists a $p$-permutation $(kG, kK)$-bimodule $M$ all of whose indecomposable direct summands have twisted diagonal vertices, with the property that 
\begin{align*}
\widetilde{F^G_{P,r}}\otimes_{kG} M\otimes_{kK} \widetilde{F^K_{Q,u}} \neq 0
\end{align*}
if and only if  $(\tilde{P},\tilde{r})\cong (\tilde{Q},\tilde{u})$. 
\end{theorem}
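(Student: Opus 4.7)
The plan is to prove the two implications separately, working throughout inside $\FFTD(G,K)$ and exploiting the primitive-idempotent machinery together with the Brauer-character composition formula (Corollary~\ref{cor brauercharacteroftensor}).

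For the direction assuming $(\tilde{P},\tilde{r})\cong(\tilde{Q},\tilde{u})$, I would first apply Lemma~\ref{lem pairsofdirectproduct}(iii) to produce a group isomorphism $\gamma\colon Q\to P$ such that $(\Delta(P,\gamma,Q),(r,u))$ is a pair of $G\times K$. The primitive idempotent $F:=F^{G\times K}_{\Delta(P,\gamma,Q),(r,u)}$ then lies in $\FFTD(G,K)$, and Lemma~\ref{lem scalarmultiple} gives directly
$$\widetilde{F^G_{P,r}}\otimes_{kG} F\otimes_{kK}\widetilde{F^K_{Q,u}} \;=\; F \;\neq\; 0.$$
Writing $F$ as an $\FF$-linear combination of isomorphism classes of indecomposable $p$-permutation $(kG,kK)$-bimodules with twisted diagonal vertices, $\FF$-linearity forces at least one summand $M$ in that expansion to satisfy $\widetilde{F^G_{P,r}}\otimes_{kG} M\otimes_{kK}\widetilde{F^K_{Q,u}}\neq 0$, producing the required bimodule.

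For the converse, suppose that $N:=\widetilde{F^G_{P,r}}\otimes_{kG} M\otimes_{kK}\widetilde{F^K_{Q,u}}\neq 0$. Then some pair $(\Delta(U,\gamma,W),(s,t))$ of $G\times K$ satisfies $\tau^{G\times K}_{\Delta(U,\gamma,W),(s,t)}(N)\neq 0$. I would apply Corollary~\ref{cor brauercharacteroftensor} twice, first to $\widetilde{F^G_{P,r}}\otimes_{kG}(M\otimes_{kK}\widetilde{F^K_{Q,u}})$ and then to $M\otimes_{kK}\widetilde{F^K_{Q,u}}$, to express this $\tau$-value as a double sum whose generic term factors through a product $\tau^{G\times G}_{\Delta(U,\alpha,V),(s,g)}(\widetilde{F^G_{P,r}})\cdot\tau^{G\times K}_{\Delta(V,\phi,Y),(g,k)}(M)\cdot\tau^{K\times K}_{\Delta(Y,\psi,W),(k,t)}(\widetilde{F^K_{Q,u}})$. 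Remark~\ref{rem conjugatepairs} then forces any non-vanishing term to satisfy $(\Delta(U,\alpha,V),(s,g))=_{G\times G}(\Delta P,(r,r))$ and $(\Delta(Y,\psi,W),(k,t))=_{K\times K}(\Delta Q,(u,u))$, hence in particular $(U,s)=_G(P,r)$ and $(W,t)=_K(Q,u)$ by projecting onto the appropriate coordinate. Finally, Lemma~\ref{lem pairsofdirectproduct}(i) applied to the pair $(\Delta(U,\gamma,W),(s,t))$ of $G\times K$ gives $(\tilde{U},\tilde{s})\cong(\tilde{W},\tilde{t})$, and these $G$- and $K$-conjugacies then transport the isomorphism to $(\tilde{P},\tilde{r})\cong(\tilde{Q},\tilde{u})$.

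I expect the direction from the isomorphism to the existence of $M$ to be essentially a one-line consequence of Lemma~\ref{lem scalarmultiple}, while the converse will carry all the real work, which is purely bookkeeping: propagating the support conditions from Remark~\ref{rem conjugatepairs} through the two successive applications of Corollary~\ref{cor brauercharacteroftensor}, and extracting the conjugacy statements on the first coordinates of the twisted diagonals. No structural input beyond the results already established in Sections~2 and~3 should be needed.
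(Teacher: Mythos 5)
Your proposal is correct and follows essentially the same route as the paper: the converse by two applications of Corollary~\ref{cor brauercharacteroftensor} combined with Remark~\ref{rem conjugatepairs} and Lemma~\ref{lem pairsofdirectproduct}, and the other direction via Lemma~\ref{lem pairsofdirectproduct}(iii) and Lemma~\ref{lem scalarmultiple}. The only (harmless) deviations are that you apply Lemma~\ref{lem pairsofdirectproduct}(i) to the outer pair $\big(\Delta(U,\gamma,W),(s,t)\big)$ rather than to the middle pair $\big(\Delta(V,\phi,Y),(g,l)\big)$ as the paper does, and that you make explicit the extraction of an actual bimodule $M$ from the nonzero element $F^{G\times K}_{\Delta(P,\gamma,Q),(r,u)}$ of $\FFTD(G,K)$, which the paper leaves implicit.
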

\begin{proof}
Suppose that $M$ is a diagonal $p$-permutation $(kG, kK)$-bimodule with the property that 
\begin{align*}
\widetilde{F^G_{P,r}}\otimes_{kG} M\otimes_{kK} \widetilde{F^K_{Q,u}} \neq 0\,.
\end{align*}
Then there exists a pair $\left(\Delta(U,\gamma, W), (s,t)\right)$ of $G\times K$ such that 
\begin{align*}
\tau^{G\times K}_{\Delta(U,\gamma, W), (s,t)} (\widetilde{F^G_{P,r}}\otimes_{kG} M\otimes_{kK} \widetilde{F^K_{Q,u}})\neq 0\,.
\end{align*}
By Corollary \ref{cor brauercharacteroftensor}, there exists $(\alpha, V,\beta)\in\Gamma_G(U,\gamma,W)$ and $g\in G$ such that 
\begin{align*}
\tau^{G\times G}_{\Delta(U,\alpha, V), (s,g)}(\widetilde{F^G_{P,r}})\neq 0 \quad \text{and} \quad \tau^{G\times K}_{\Delta(V,\beta, W), (g,t)}(M\otimes_{kK} \widetilde{F^K_{Q,u}})\neq 0\,.
\end{align*}
Similarly, there exists $(\phi, Y,\psi)\in\Gamma_K(V,\beta, W)$ and $l\in K$ such that
\begin{align*}
\tau^{G\times K}_{\Delta(V,\phi, Y), (g,l)}(M)\neq 0 \quad \text{and} \quad \tau^{K\times K}_{\Delta(Y,\psi, W), (l,t)}(\widetilde{F^K_{Q,u}})\neq 0\,.
\end{align*}
Since $\tau^{G\times G}_{\Delta(U,\alpha, V), (s,g)}(\widetilde{F^G_{P,r}})\neq 0$, Remark \ref{rem conjugatepairs} implies that the pair $(P,r)$ is $G$-conjugate to the pair $(V,g)$. Similarly $\tau^{K\times K}_{\Delta(Y,\psi, W), (l,t)}(\widetilde{F^K_{Q,u}})\neq 0$ implies that the pair $(Q,u)$ is $K$-conjugate to the pair $(Y,l)$. Moreover since $\left(\Delta(V,\phi, Y), (g,l)\right)$ is a pair of $G\times K$, Lemma \ref{lem pairsofdirectproduct} implies that $(\tilde{V},\tilde{g})\cong (\tilde{Y},\tilde{l})$. Hence we also have $(\tilde{P},\tilde{r})\cong (\tilde{Q},\tilde{u})$, as desired. 

Now assume that $(\tilde{P},\tilde{r})\cong (\tilde{Q},\tilde{u})$. Then by Lemma \ref{lem pairsofdirectproduct}, there exists a group isomorphism $\gamma:Q\to P$ such that $\left(\Delta(P,\gamma, Q), (r,u)\right)$ is a pair of $G\times K$. By Lemma \ref{lem scalarmultiple}, $\widetilde{F^G_{P,r}}\otimes_{kG} F^{G\times K}_{\Delta(P,\gamma, Q), (r,u)}\otimes_{kK} \widetilde{F^K_{Q,u}}$ is nonzero and we are done. 
\end{proof}

\section{Some Clifford theory}\label{sec CliffordTheory}
The results in this section will be used in Section \ref{sec Blocksasfunctors}.  Let $k$ be an algebraically closed field of positive characteristic $p$. Let also $N$ be a finite group with a normal subgroup $C$ such that $N/C$ is a cyclic $p'$-group $\langle u\rangle$. 

\begin{theorem}\label{thm cliffordthm}
{\rm (i)} Any $N$-invariant projective indecomposable $kC$-module $F$ can be extended to a projective indecomposable $kN$-module $E$. Moreover, $\Ind_C^N F\cong \bigoplus_{\lambda:\langle u\rangle \to k^\times} E_\lambda$
where $E_\lambda= E\otimes_k \Inf^N_{N/C} k_\lambda$.

\smallskip
{\rm (ii)} If $M$ is a projective indecomposable $kN$-module such that $\Res^N_C M$ admits an $N$-invariant indecomposable summand $F$, then $F=\Res^N_C M$, that is, $\Res^N_C M$ is indecomposable. 
\end{theorem}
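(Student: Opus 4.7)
The plan is to establish (i) and then derive (ii) from it.

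For (i), I would first construct the extension $E$; once $E$ is in hand, its projectivity, indecomposability, and the desired decomposition of $\Ind_C^N F$ follow smoothly. To build $E$, fix a lift $\hat u \in N$ of $u$ and set $n = |u| = [N:C]$. The $N$-invariance of $F$ provides a $k$-linear automorphism $\phi \colon F \to F$ satisfying $\phi(cf) = (\hat u c\hat u^{-1})\phi(f)$ for all $c \in C$, $f \in F$. Writing $\rho(c)$ for the $kC$-action of $c$ on $F$, both $\phi^n$ and $\rho(\hat u^n)$ intertwine the $C$-action with conjugation by $\hat u^n$, so their composite $\phi^n \circ \rho(\hat u^n)^{-1}$ is a unit in $\End_{kC}(F)$. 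Since $F$ is projective indecomposable over an algebraically closed field, $\End_{kC}(F)$ is local with residue field $k$, and its unit group fits in a short exact sequence $1 \to 1 + J \to \End_{kC}(F)^\times \to k^\times \to 1$ where $J$ is the (nilpotent) Jacobson radical; the subgroup $1 + J$ is a finite $p$-group since the algebra is finite-dimensional over $k$ in characteristic $p$. Because $n$ is coprime to $p$ and $k$ is algebraically closed, one can solve the relevant (twisted norm) equation in this unit group to rescale $\phi$ so that $\phi^n = \rho(\hat u^n)$. Setting $\hat u \cdot f := \phi(f)$ then defines a $kN$-module structure on $F$, producing $E$ with $\Res_C^N E = F$. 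Indecomposability of $E$ is inherited from that of $\Res_C^N E = F$, and projectivity of $E$ follows since $F$ is projective and $[N:C] = n$ is invertible in $k$ (equivalently, $E$ is a direct summand of the projective module $\Ind_C^N F$). The decomposition of $\Ind_C^N F$ then follows from the standard identity $\Ind_C^N \Res_C^N E \cong E \otimes_k \Inf_{N/C}^N k[N/C]$, together with the splitting $k[N/C] \cong \bigoplus_\lambda k_\lambda$ of this semisimple group algebra.

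For (ii), let $F$ be an $N$-invariant indecomposable summand of $\Res_C^N M$, and decompose $\Res_C^N M = F^a \oplus Y$ where $F$ does not occur as a summand of $Y$. Since $[N:C]$ is invertible in $k$, $M$ is relatively $C$-projective, so $M \mid \Ind_C^N \Res_C^N M \cong (\Ind_C^N F)^a \oplus \Ind_C^N Y$. If $M$ were a summand of $\Ind_C^N Y$, then $F \mid \Res_C^N M$ combined with $\Res_C^N M \mid \Res_C^N \Ind_C^N Y \cong \bigoplus_{g \in N/C} \lexp{g}{Y}$ would force $F \mid \lexp{g}{Y}$ for some $g \in N/C$; but then $N$-invariance of $F$ gives $F \cong \lexp{g^{-1}}{F} \mid Y$, contradicting the choice of $Y$. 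Hence $M \mid (\Ind_C^N F)^a = \bigoplus_\lambda E_\lambda^a$ by part (i), and Krull--Schmidt yields $M \cong E_\lambda$ for some $\lambda$; therefore $\Res_C^N M \cong \Res_C^N E_\lambda = F$ is indecomposable.

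The hardest step is the rescaling of $\phi$ in the extension argument of (i): one has to solve a twisted norm equation (rather than a plain $n$-th root equation) in the noncommutative local unit group $\End_{kC}(F)^\times$, and solvability rests on $\gcd(n, p) = 1$ combined with the structure of this unit group as an extension of $k^\times$ by a finite $p$-group. All remaining steps are routine applications of Frobenius reciprocity, Mackey's formula, Krull--Schmidt, and the semisimplicity of $k[N/C]$.
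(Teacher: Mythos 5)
Your proof is correct, but it takes a genuinely different route from the paper. For (i), the paper does not construct the extension of $F$ directly: it passes to the simple head $S=F/J(F)$, invokes Feit's extension theorem for simple modules over a normal subgroup with cyclic quotient to get a simple $kN$-module $\widehat{S}$ extending $S$, takes its projective cover $E$, shows the twists $\widehat{S}_\lambda$ (hence the $E_\lambda$) are pairwise distinct, and then uses $\Hom_{kN}(\Ind_C^N F,\widehat{S}_\lambda)\cong\Hom_{kC}(F,S)$ plus a dimension count to conclude both that $\bigoplus_\lambda E_\lambda=\Ind_C^N F$ and that $\Res^N_C E=F$. You instead extend $F$ itself by an obstruction argument in $\End_{kC}(F)^\times$ and then get the decomposition from the tensor identity $\Ind_C^N\Res^N_C E\cong E\otimes_k \Inf^N_{N/C}k[N/C]$; this is more self-contained (no appeal to Feit) and avoids having to prove the $E_\lambda$ distinct, at the cost of the norm-equation step. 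Two remarks on that step: first, $1+J$ is not a \emph{finite} $p$-group (the field $k$ is infinite); what you actually need, and what is true, is that it has exponent a power of $p$ since $J$ is nilpotent, so it is uniquely $n$-divisible for $n=|u|$ prime to $p$. Second, the twisted norm equation can be avoided altogether: the unit $c=\phi^n\rho(\hat u^n)^{-1}$ commutes with $\phi$ (because $\phi\rho(\hat u^n)\phi^{-1}=\rho(\hat u^n)$), so one may choose the correcting unit $\alpha$ inside the commutative local subalgebra $k[c]$, where it commutes with $\phi$ and the condition becomes the plain equation $\alpha^n=c^{-1}$, solvable since $k[c]^\times\cong k^\times\times(1+\mathfrak m)$ with $k$ algebraically closed and $1+\mathfrak m$ of $p$-power exponent. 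With that adjustment your (i) is complete. For (ii), the paper maps $M$ onto a simple quotient of the semisimple module $\Ind_C^N S$ and identifies $M$ with some $E_\lambda$, whereas you use relative $C$-projectivity, Mackey and Krull--Schmidt to force $M\mid\Ind_C^N F\cong\bigoplus_\lambda E_\lambda$ directly; both arguments are valid, and yours has the small advantage of not needing the distinctness of the $\widehat{S}_\lambda$.
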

\begin{proof}
{\rm (i)} Let $F$ be an $N$-invariant indecomposable projective $kC$-module and set $S=F/J(F)$.  By \cite[Theorem~2.14]{Feit1982}, we can extend $S$ to a simple $kN$-module $\widehat{S}$, with projective cover $E$.  For a group isomorphism $\lambda:\langle u\rangle \to k^\times$, we set $\widehat{S}_\lambda = \widehat{S} \otimes_k \Inf^N_{N/C}k_\lambda$.  Then $\widehat{S}_\lambda$ is a simple $kN$-module with projective cover $E_\lambda= E\otimes_k \Inf^N_{N/C} k_\lambda$. 

The simple $kN$-modules $\widehat{S}_\lambda$ are all distinct. Equivalently, if $\lambda:\langle u\rangle\to k^\times$ is a non-trivial character, then $\widehat{S}$ and $\widehat{S}_\lambda$ are not isomorphic. Indeed, if $\varphi: \widehat{S}\to \widehat{S}_\lambda$ is an isomorphism of $kN$-modules, then the restriction of $\varphi$ to $kC$ is an automorphism of $S$ as a $kC$-module, hence it is a scalar multiple of the identity of $S$, by Schur's lemma.  So there exists $\mu\in k^\times$ such that $\varphi(v)=\mu v$ for any $v\in S$.  Now since $\varphi: \widehat{S}\to \widehat{S}_\lambda$ is a morphism of $kN$-modules, for any $x\in N$ and $v\in S$, we have
\begin{align*}
\varphi(x\cdot v)= \lambda(\overline{x})x\cdot \varphi(v)\,,
\end{align*}
where $\overline{x}$ is the image of $x$ in $\langle u\rangle$. So we get that $\mu x\cdot v=\lambda(\overline{x})\mu x\cdot v$, hence $\lambda(\overline{x})=1$, so $\lambda=1$. 

Now $\Hom_{kN}(\Ind_C^N F, \widehat{S}_\lambda) \cong \Hom_{kC}(F, S)$ is one dimensional.  Hence there is a non-zero morphism of $kN$-modules $\psi: \Ind^N_C F\to \widehat{S}_\lambda$, which is surjective since $\widehat{S}_\lambda$ is simple. Since $\Ind^N_C F$ is projective, it follows that $\psi$ can be lifted to $\theta: \Ind^N_C F\to E_\lambda$, and $\theta$ is surjective, because $E_\lambda$ is a projective cover of $\widehat{S}_\lambda$. Hence $\theta$ splits, and it follows that $E_\lambda$ is a direct summand of $\Ind^N_C F$. 

Since the modules $\widehat{S}_\lambda$ are all distinct, the modules $E_\lambda$ are all distinct as well, and it follows that $\oplus_\lambda E_\lambda$ is a direct summand of $\Ind^N_C F$.  Since $\langle u\rangle$ is a $p'$-group, this implies in particular that
\begin{align*}
|u| \dim_k E = \dim_k (\oplus_\lambda E_\lambda) \le |N:C| \dim_k F = |u| \dim_k F\,,
\end{align*}
so $\dim_k E \le \dim_k F$. 

But on the other hand, the surjection $E\to \widehat{S}$ restrict to a surjection of $kC$-modules $\Res^N_C E\to\nolinebreak S$. Since $\Res^N_C E$ is projective, we get as above that the projective cover $F$ of $S$ is a direct summand of $\Res^N_C E$. In particular, $\dim_k F\le \dim_k E$. It follows that $\dim_k E =\dim_k F$ and this proves {\rm (i)}.

\smallskip
{\rm (ii)} Let $M$ be a projective indecomposable $kN$-module such that $\Res^N_C M$ admits an $N$-invariant indecomposable summand $F$. Then as above, the simple $kC$-module $S=F/J(F)$ can be extended to a simple $kN$-module $\widehat{S}$ with projective cover $E$, and the simple $kN$-modules $\widehat{S}_\lambda$ are all distinct. Now
\begin{align*}
\Hom_{kN}(\Ind^N_C S, \widehat{S}_\lambda)\cong \Hom_{kC}(S,S)
\end{align*}
is one dimensional, so there is a non-zero morphism $\Ind^N_C S\to\widehat{S}_\lambda$ which is surjective since $\widehat{S}_\lambda$ is simple. It follows that we have a surjective morphism of $kN$-modules
\begin{align*}
\sigma: \Ind^N_C S\to\bigoplus_\lambda \widehat{S}_\lambda\,.
\end{align*}
But these two modules have the same dimension $|u|\dim_k S$, so $\sigma$ is an isomorphism. In particular $\Ind^N_C S$ is semisimple. 

Since $F$ is a direct summand of $\Res^N_C M$, we get a non-zero morphism $\Res^N_C M\to S$, hence a non-zero morphism $M\to \Ind^N_C S$. The image $L$ of this morphism is a semisimple quotient of~$M$, which is projective and indecomposable. Hence $L$ is simple, and isomorphic to one of the modules~$\widehat{S}_\lambda$.  Then $M\cong E_\lambda$ and $F=\Res^N_C M$. This proves {\rm (ii)}. 
\end{proof}

\section{An equivalence of categories}\label{sec equivalenceofcats}
Let $\calA$ be an abelian category with arbitrary direct sums. Recall that an object $P$ of $\calA$ is called {\it compact} if for any family $(X_i)_{i\in I}$ of objects of $\calA$, the natural morphism
\begin{align*}
\bigoplus_{i\in I} \Hom_\calA (P, X_i) \to \Hom_\calA(P, \bigoplus_{i\in I} X_i)
\end{align*}
is an isomorphism. The following is well-known to specialists. We include the proof for the convenience of the reader. 

\begin{theorem}\label{thm equivalenceofcats}
Let $R$ be a commutative ring and $\calA$ an $R$-linear abelian category with arbitrary direct sums. Let moreover $\calP$ be a set of objects of $\calA$ with the following properties:

\smallskip
{\rm (i)} If $P\in \calP$, then $P$ is projective and compact in $\calA$.

\smallskip
{\rm (ii)} The set $\calP$ generates $\calA$, i.e., for any object $X$ of $\calA$, there exists a family $(P_j)_{j\in J}$ of elements of $\calP$, and an epimorphism $\oplus_{j\in J} P_j \twoheadrightarrow X$ in $\calA$. 

Let $\Fun_R(\calP^{\mathrm{op}}, \lMod{R})$ be the category of $R$-linear contravariant functors from the full subcategory $\calP$ of $\calA$ to the category of $R$-modules.  Then the functor
\begin{align*}
\calH: X\in \calA \mapsto H_X = \Hom_\calA(-, X) \in \Fun_R(\calP^{\mathrm{op}}, \lMod{R})
\end{align*}
is an equivalence of $R$-linear abelian categories. 
\end{theorem}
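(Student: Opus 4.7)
The plan is the classical Gabriel--Mitchell argument: identify $\calA$ with its image under restricted Yoneda into the presheaf category, using that $\calP$ consists of projective compact generators. I would proceed in four steps.

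First, I would record two elementary formal properties of $\calH$. By compactness of every $P\in\calP$, for any family $(X_i)_{i\in I}$ in $\calA$ the natural map $\bigoplus_i\Hom_\calA(P,X_i)\to \Hom_\calA(P,\bigoplus_iX_i)$ is an isomorphism, so evaluating at any object of $\calP$ shows that $\calH$ preserves arbitrary direct sums. By projectivity of every $P\in\calP$, for any exact sequence $A\to B\to C\to 0$ in $\calA$ and any $P\in\calP$, the sequence $\Hom_\calA(P,A)\to\Hom_\calA(P,B)\to\Hom_\calA(P,C)\to 0$ is exact, so $\calH$ is right exact (exactness in $\Fun_R(\calP^{\mathrm{op}},\lMod{R})$ being pointwise).

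Second, I would check that $\calH$ is fully faithful on direct sums of objects of $\calP$. For $P\in\calP$, let $h_P$ denote $\calH(P)=\Hom_\calA(-,P)\big|_{\calP}$. By the Yoneda lemma in $\Fun_R(\calP^{\mathrm{op}},\lMod{R})$, $\Hom_{\Fun}(h_P,F)=F(P)$ for every $F$. Combined with the preservation of direct sums above, for any families $(P_i)_{i\in I}$ in $\calP$ and any $Y\in\calA$ one obtains a chain of natural isomorphisms
\begin{align*}
\Hom_{\Fun}\bigl(\calH(\textstyle\bigoplus_i P_i),\calH(Y)\bigr)
&\cong \prod_i\Hom_{\Fun}(h_{P_i},\calH(Y))\\
&\cong \prod_i\Hom_\calA(P_i,Y)
\cong \Hom_\calA(\textstyle\bigoplus_i P_i,Y),
\end{align*}
and one checks that this composite is induced by $\calH$.

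Third, I would extend to full faithfulness of $\calH$ on all of $\calA$. By hypothesis (ii), any $X\in\calA$ fits into a presentation $\bigoplus_j Q_j\to\bigoplus_iP_i\to X\to 0$ with $P_i,Q_j\in\calP$, and applying the right exact functor $\calH$ gives a presentation of $\calH(X)$. For any $Y\in\calA$, the left exact functor $\Hom_\calA(-,Y)$ applied to the presentation of $X$, and the left exact functor $\Hom_{\Fun}(-,\calH(Y))$ applied to the presentation of $\calH(X)$, yield two four-term exact sequences. The second step identifies the last two terms in both sequences, so the universal property of kernels forces $\Hom_\calA(X,Y)\to\Hom_{\Fun}(\calH(X),\calH(Y))$ to be an isomorphism.

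Fourth, essential surjectivity. Given $F\in\Fun_R(\calP^{\mathrm{op}},\lMod{R})$, the representables $h_P$ with $P\in\calP$ generate the presheaf category: the Yoneda map $\bigoplus_{P\in\calP,\,x\in F(P)}h_P\twoheadrightarrow F$ is an epimorphism, and iterating on its kernel yields a presentation $\bigoplus_jh_{Q_j}\to\bigoplus_i h_{P_i}\to F\to 0$. By the full faithfulness just established, the left map is $\calH(\phi)$ for a unique $\phi:\bigoplus_jQ_j\to\bigoplus_iP_i$ in $\calA$. Let $X=\mathrm{coker}\,\phi$; then right exactness of $\calH$ and the preservation of direct sums give $\calH(X)\cong F$. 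Combining the three properties shows that $\calH$ is an equivalence of $R$-linear abelian categories.

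The main technical points are the correct use of compactness (to turn direct sums outside $\calP$ into direct sums of representables) and of projectivity (to get right exactness of $\calH$); once these are in place, the rest is a standard presentation argument. No step is genuinely hard, but care is needed in the third step to ensure that the isomorphism between the four-term sequences is induced by $\calH$ and not only an abstract isomorphism.
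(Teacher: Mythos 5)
Your proposal is correct and follows essentially the same route as the paper: full faithfulness via a presentation $\oplus Q_j\to\oplus P_i\to X\to 0$, projectivity and compactness of the objects of $\calP$, the Yoneda lemma, and a comparison of the two resulting left-exact sequences; then essential surjectivity by covering $F$ with representables, lifting the presentation to a morphism in $\calA$ via full faithfulness, and taking its cokernel. The only difference is organizational (you isolate preservation of direct sums and right exactness of $\calH$ as preliminary facts, which the paper establishes inline), so no further comment is needed.
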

\begin{proof}
We first show that $\calH$ is fully faithful. Let $X$ be an object of $\calA$. By Condition {\rm (ii)}, there exists an exact sequence in $\calA$ of the form
\begin{align}\label{eqn firstsequence}
\oplus_{j\in J} Q_j \longrightarrow \oplus_{i\in I} P_i \longrightarrow X \longrightarrow 0 \,,
\end{align}
where $(Q_j)_{j\in J}$ and $(P_i)_{i\in I}$ are families of elements of $\calP$. Now if $P\in\calP$, then applying the functor $\Hom_\calA(P, -)$ gives an exact sequence of $R$-modules
\begin{align*}
\Hom_\calA(P, \oplus_{j\in J} Q_j) \longrightarrow \Hom_\calA(P,\oplus_{i\in I} P_i) \longrightarrow \Hom_\calA(P,X) \longrightarrow 0\,,
\end{align*}
since $P$ is projective in $\calA$. Since moreover $P$ is compact in $\calA$, this exact sequence is naturally isomorphic to the exact sequence 
\begin{align*}
\oplus_{j\in J} \Hom_\calA(P, Q_j) \longrightarrow \oplus_{i\in I} \Hom_\calA(P,P_i) \longrightarrow \Hom_\calA(P,X) \longrightarrow 0\,.
\end{align*}
In other words, we get an exact sequence in the category $\calF=\Fun_R(\calP^{\mathrm{op}}, \lMod{R})$
\begin{align*}
\oplus_{j\in J} H_{Q_j} \longrightarrow \oplus_{i\in I} H_{P_i} \longrightarrow H_X \longrightarrow 0\,.
\end{align*}
Now for any object $Y$ of $\calA$, applying the functor $\Hom_\calF(-, H_Y)$ to this sequence gives the exact sequence
\begin{align*}
0\longrightarrow \Hom_\calF(H_X,H_Y) \longrightarrow \Hom_\calF(\oplus_{i\in I} H_{P_i}, H_Y)\longrightarrow \Hom_\calF(\oplus_{j\in J} H_{Q_j}, H_Y)
\end{align*}
of $R$-modules.  This sequence is naturally isomorphic to 
\begin{align*}
0\longrightarrow \Hom_\calF(H_X,H_Y) \longrightarrow \prod_{i\in I} \Hom_\calF( H_{P_i}, H_Y)\longrightarrow \prod_{j\in J}  \Hom_\calF(H_{Q_j}, H_Y)\,.
\end{align*}
Now by the Yoneda lemma, for each $P\in \calP$, we get a natural isomorphism $\Hom_\calF(H_P, H_Y)\cong \Hom_\calA(P,Y)$, so the previous sequence is isomorphic to 
\begin{align*}
0\longrightarrow \Hom_\calF(H_X,H_Y) \longrightarrow \prod_{i\in I} \Hom_\calA( P_i, H_Y)\longrightarrow \prod_{j\in J}  \Hom_\calA(Q_j, H_Y)\,,
\end{align*}
or in other words to the sequence
\begin{align}\label{eqn secondsequence}
0\longrightarrow \Hom_\calF(H_X,H_Y) \longrightarrow  \Hom_\calA(\oplus_{i\in I} P_i, H_Y)\longrightarrow  \Hom_\calA(\oplus_{j\in J} Q_j, H_Y)\,.
\end{align}
Now applying the functor $\Hom_\calA(-,Y)$ to the exact sequence (\ref{eqn firstsequence}) gives the exact sequence
\begin{align}\label{eqn thirdsequence}
0\longrightarrow \Hom_\calA(X,Y) \longrightarrow  \Hom_\calA(\oplus_{i\in I} P_i, H_Y)\longrightarrow  \Hom_\calA(\oplus_{j\in J} Q_j, H_Y)\,.
\end{align}
Moreover, the exact sequences (\ref{eqn secondsequence}) and (\ref{eqn thirdsequence}) fit into a commutative diagram
\begin{center}
\begin{tikzcd}
0 \arrow[rightarrow]{r} 
  & \Hom_\calA(X,Y) \arrow[rightarrow]{r}\arrow[rightarrow]{d}
  & \Hom_\calA(\oplus_{i\in I} P_i, H_Y)\arrow{r} \arrow{d}{=} 
  & \Hom_\calA(\oplus_{j\in J} Q_j, H_Y) \arrow{d}{=} \\
  
0 \arrow[rightarrow]{r} 
  & \Hom_\calF(H_X,H_Y) \arrow[rightarrow]{r}
  & \Hom_\calA(\oplus_{i\in I} P_i, H_Y) \arrow{r}
  & \Hom_\calA(\oplus_{j\in J} Q_j, H_Y) 

\end{tikzcd}
\end{center}
of $R$-modules.  It follows that the vertical arrow $\Hom_\calA(X,Y) \longrightarrow \Hom_\calF(H_X,H_Y)$ is an isomorphism for any objects $X$ and $Y$ of $\calA$. In other words, the functor $\calH$ is fully faithful as was to be shown. 

We now prove that $\calH$ is essentially surjective. Let $F$ be any object of $\calF=\Fun_R(\calP^{\mathrm{op}}, \lMod{R})$. For every $P\in \calP$ we choose a generating set $s_P$ of $F(P)$ as an $R$-module. By the Yoneda lemma again, we have an isomorphism $\Hom_\calF(H_P,F)\cong F(P)$ from which we get a morphism
\begin{align}\label{eqn fourthsequence}
\oplus_{P\in\calP}\oplus_{s\in s_P} H_P \longrightarrow F\,,
\end{align}
which is an epimorphism as $s_P$ generates $F(P)$ for any $P\in\calP$. Now since $\calP$ consists of compact objects of $\calA$, we have an isomorphism
\begin{align*}
\oplus_{P\in\calP}\oplus_{s\in s_P} \Hom_\calA(-, P) \cong \Hom_\calA(-, \oplus_{P\in\calP}\oplus_{s\in s_P} P)
\end{align*}
in $\calF$, in other words an isomorphism $\oplus_{P\in\calP}\oplus_{s\in s_P} H_P\cong H_X$ where $X=\oplus_{P\in\calP}\oplus_{s\in s_P} P$.

So for any $F\in\calF$, there exists $X\in\calA$ and an epimorphism $H_X \twoheadrightarrow F$ in $\calF$ as in (\ref{eqn fourthsequence}). Then there exists $Y\in\calA$ and an exact sequence
\begin{align*}
H_Y \longrightarrow H_X \longrightarrow F\longrightarrow 0
\end{align*}
in $\calF$. Since the functor $\calH$ is fully faithful by the first part of the proof, the left arrow $H_Y \to H_X$ in this sequence is induced by a morphism $f: Y\to X$ in $\calA$. Let $Z$ be the cokernel of $f$ in $\calA$. The exact sequence 

\begin{center}
\begin{tikzcd}
Y \arrow{r}{f} 
& X \arrow{r} & Z\arrow{r} &0
\end{tikzcd}
\end{center}
in $\calA$ gives an exact sequence $H_Y \longrightarrow H_X \longrightarrow H_Z\longrightarrow 0$ in $\calF$, because $\calP$ consists of projective objects of $\calA$. It follows that $H_Z$ is isomorphic to the cokernel of $H_Y \longrightarrow H_X$ in $\calF$, that is $H_Z\cong F$. It follows that the functor $\calH$ is essentially surjective, which completes the proof of the theorem.  
\end{proof}

\section{Semisimplicity of the functor category}\label{sec semisimplicity}
Recall that $k$ denotes an algebraically closed field of positive characteristic $p$ and $\FF$ denotes an algebraically closed field of characteristic zero. We recall from the introduction the definition of diagonal $p$-permutation functors over $\FF$. 
\begin{definition}
Let $\FF pp_k^\Delta$ be the category with
\begin{itemize}
\item objects: finite groups
\item $\mathrm{Mor}_{\FF pp_k^\Delta}(G,H) = \FF\otimes_{\ZZ}T^{\Delta}(H,G)=\FFT^{\Delta}(H,G)$.
\end{itemize} 
\end{definition}
An $\FF$-linear functor from $\FF pp_k^\Delta$ to $\lMod{\FF}$ is called a \textit{diagonal $p$-permutation functor} over $\FF$. Diagonal $p$-permutation functors form an abelian category $\Fppk{\FF}$.

Let $G$ be a finite group and let $(P,r)$ be a pair of $G$. Then the element $\widetilde{F^G_{P,r}}$ is an idempotent of $\FFTD(G,G)$ which is the endomorphism algebra of the representable functor $\FFTD_G$. Let $\FFTD_G\widetilde{F^G_{P,r}}$ denote the corresponding direct summand of the functor $\FFTD_G$. For any finite group $H$, the evaluation $\FFTD_G\widetilde{F^G_{P,r}}(H)$ is given by
\begin{align*}
\FFTD_G\widetilde{F^G_{P,r}}(H)=\FFTD(H,G)\widetilde{F^G_{P,r}}:=\{X\otimes_{kG} \widetilde{F^G_{P,r}}: X\in \FFTD(H,G)\}\,.
\end{align*}

\begin{lemma}\label{lem naturaltransformations}
Let $G$ and $K$ be finite groups. Let $(P,r)$ be a pair of $G$ and $(Q,u)$ a pair of~$K$. Let also $F\in \Fppk{\FF}$ be a diagonal $p$-permutation functor over $\FF$.

\smallskip
{\rm (i)} We have $\Hom_{\Fppk{\FF}}(\FFTD_G\widetilde{F^G_{P,r}}, F)\equiv F(G)^{\widetilde{F^G_{P,r}}}:=\{m\in F(G): F(\widetilde{F^G_{P,r}})(m)=m\}$.

\smallskip 
{\rm (ii)} We have $\Hom_{\Fppk{\FF}}(\FFTD_G\widetilde{F^G_{P,r}}, \FFTD_K)\equiv \widetilde{F^G_{P,r}}\FFTD(G,K)$.

\smallskip
{\rm (iii)} We have $\Hom_{\Fppk{\FF}}(\FFTD_G\widetilde{F^G_{P,r}}, \FFTD_K\widetilde{F^K_{Q,u}})\equiv \widetilde{F^G_{P,r}}\FFTD(G,K)\widetilde{F^K_{Q,u}}$.
\end{lemma}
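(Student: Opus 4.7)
My plan is to apply the Yoneda lemma to the representable functor $\FFTD_G$ and then cut down by the idempotent $e := \widetilde{F^G_{P,r}}$ of $\End_{\Fppk{\FF}}(\FFTD_G) = \FFTD(G,G)$. The starting point is the Yoneda isomorphism
\[\calY_F \colon \Hom_{\Fppk{\FF}}(\FFTD_G, F) \liso F(G), \qquad \phi \mapsto \phi_G(kG),\]
natural in $F \in \Fppk{\FF}$, whose inverse sends $m \in F(G)$ to the natural transformation $\phi^{(m)}$ with $\phi^{(m)}_H(X) = F(X)(m)$ for $X \in \FFTD(H,G) = \FFTD_G(H)$.

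For (i), the idempotent $e \in \End(\FFTD_G)$ acts on the right of $\FFTD_G$, i.e.\ on each $\FFTD(H,G)$ by $X \mapsto X \otimes_{kG} e$, and yields a direct-sum decomposition $\FFTD_G = \FFTD_G e \oplus \FFTD_G(1-e)$. Consequently $\Hom_{\Fppk{\FF}}(\FFTD_G e, F)$ identifies with the subspace $\{\phi \in \Hom_{\Fppk{\FF}}(\FFTD_G, F) : \phi \circ e = \phi\}$. Using the Yoneda formula I will check that the endomorphism $\phi \mapsto \phi \circ e$ corresponds via $\calY_F$ to $F(e)$: indeed $(\phi \circ e)_G(kG) = \phi_G(kG \otimes_{kG} e) = \phi_G(e) = F(e)\bigl(\phi_G(kG)\bigr)$. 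Since $F(e)$ is itself idempotent, its fixed-point set coincides with its image, and this gives $\Hom_{\Fppk{\FF}}(\FFTD_G e, F) \cong F(G)^e$.

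Statements (ii) and (iii) then follow by specialising $F$. For (ii), taking $F = \FFTD_K$ gives $F(G) = \FFTD(G,K)$, and functoriality identifies $F(e) = \FFTD_K(e)$ with left composition by $e$, namely $m \mapsto e \otimes_{kG} m$; the fixed-point set of this idempotent endomorphism equals its image $e \cdot \FFTD(G,K) = \widetilde{F^G_{P,r}} \FFTD(G,K)$. For (iii) I take $F$ to be the direct summand $\FFTD_K \widetilde{F^K_{Q,u}}$ of $\FFTD_K$, whose value at $G$ is $\FFTD(G,K) \widetilde{F^K_{Q,u}}$, and on which $F(e)$ is still given by left multiplication by $e$; applying (i) to this $F$ gives $F(G)^e = \widetilde{F^G_{P,r}} \FFTD(G,K) \widetilde{F^K_{Q,u}}$, as required.

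I do not foresee any substantive obstacle in this argument; the only point requiring care is the sidedness convention: the idempotent $e$ acts on the \emph{right} of the representable $\FFTD_G$ (from composition on the right in $\FF pp_k^\Delta$), while it acts on the \emph{left} of $F(G) = \FFTD(G,K)$ through the covariant functoriality of $F$ (which is composition on the left in the category). This matches the left-multiplication notation $\widetilde{F^G_{P,r}} \FFTD(G,K)$ appearing on the right-hand sides of (ii) and (iii).
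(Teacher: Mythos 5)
Your proposal is correct and is essentially the argument the paper gives (and leaves mostly implicit): the paper also observes that a natural transformation out of $\FFTD_G\widetilde{F^G_{P,r}}$ is determined by its value at $\widetilde{F^G_{P,r}}$, which is exactly the Yoneda-plus-idempotent-splitting argument you spell out, with (ii) and (iii) obtained by specialising $F$. Your explicit check of the sidedness (right action of the idempotent on the representable, left action via functoriality on $F(G)=\FFTD(G,K)$) is a correct filling-in of the details the paper summarises by ``one shows that \dots'' and ``proved similarly''.
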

\begin{proof}
Let $\Phi\in \Hom_{\Fppk{\FF}}(\FFTD_G\widetilde{F^G_{P,r}}, F)$ be a natural transformation. One shows that $\Phi$ is completely determined by $\Phi_G(\widetilde{F^G_{P,r}})=:m\in F(G)$. Moreover, we have $F(\widetilde{F^G_{P,r}})(m)=m$ and hence $m\in F(G)^{\widetilde{F^G_{P,r}}}$. One can also show that every $m\in F(G)^{\widetilde{F^G_{P,r}}}$ determines a natural transformation. This proves the statement (i).

Statements (ii) and (iii) are proved similarly.
\end{proof}

\begin{lemma}\label{lem projectiveandcompact}
Let $(P,r)$ be a pair of $G$.  The functor $\FFTD_G\widetilde{F^G_{P,r}}$ is compact and projective in $\Fppk{\FF}$. 
\end{lemma}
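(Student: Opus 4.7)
The plan is to reduce the claim to the corresponding statement for the representable functor $\FFTD_G$, of which $\FFTD_G\widetilde{F^G_{P,r}}$ is a direct summand, and then invoke the general fact that direct summands of projective (respectively, compact) objects are projective (respectively, compact).

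First I would record that since $\widetilde{F^G_{P,r}}$ is an idempotent of the endomorphism algebra $\FFTD(G,G)$ of $\FFTD_G$, there is a decomposition
\begin{equation*}
\FFTD_G = \FFTD_G\widetilde{F^G_{P,r}} \oplus \FFTD_G(1-\widetilde{F^G_{P,r}})
\end{equation*}
in $\Fppk{\FF}$, exhibiting $\FFTD_G\widetilde{F^G_{P,r}}$ as a direct summand of $\FFTD_G$. Hence it suffices to prove that $\FFTD_G$ is projective and compact.

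Next I would invoke the Yoneda lemma for the $\FF$-linear functor category $\Fppk{\FF}$: for any diagonal $p$-permutation functor $F$ there is a natural isomorphism $\Hom_{\Fppk{\FF}}(\FFTD_G, F) \cong F(G)$ (this is the $r=1$, $P=1$ case of Lemma~\ref{lem naturaltransformations}(i)). Since limits, colimits and exact sequences in a functor category with target $\lMod{\FF}$ are computed pointwise, the evaluation functor $F \mapsto F(G)$ is exact and commutes with arbitrary direct sums. Therefore $\FFTD_G$ is projective (it turns epimorphisms $F\twoheadrightarrow F'$ into surjections $F(G)\twoheadrightarrow F'(G)$) and compact (the canonical map $\bigoplus_{i\in I}\Hom(\FFTD_G,F_i)\to\Hom(\FFTD_G,\bigoplus_{i\in I}F_i)$ is identified with the identity of $\bigoplus_{i\in I}F_i(G)$).

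Finally I would conclude by noting that a direct summand of a projective object is projective, and a direct summand of a compact object is compact (the latter follows at once from the natural splitting of the inclusion and projection maps), so $\FFTD_G\widetilde{F^G_{P,r}}$ inherits both properties from $\FFTD_G$. There is no real obstacle here: the argument is purely formal, and the content of the lemma is essentially the combination of the Yoneda lemma with the observation that evaluation at $G$ is exact and preserves direct sums in $\Fppk{\FF}$.
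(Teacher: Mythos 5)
Your argument is correct and essentially the same as the paper's: projectivity via being a direct summand of the representable functor $\FFTD_G$, and compactness via the identification of morphisms out of (a summand of) $\FFTD_G$ with (an idempotent piece of) the evaluation at $G$, which commutes with direct sums — the paper just applies Lemma~\ref{lem naturaltransformations}(i) directly to $\FFTD_G\widetilde{F^G_{P,r}}$ instead of first treating $\FFTD_G$ and then passing to summands. One small inaccuracy: the plain Yoneda isomorphism $\Hom_{\Fppk{\FF}}(\FFTD_G,F)\cong F(G)$ is \emph{not} the $P=1$, $r=1$ case of Lemma~\ref{lem naturaltransformations}(i), since $\widetilde{F^G_{1,1}}$ is not the identity of $\FFTD(G,G)$ (the identity is $\sum_{(P,s)}\widetilde{F^G_{P,s}}=[kG]$); the Yoneda statement is of course still valid, so this does not affect the proof.
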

\begin{proof}
The functor $\FFTD_G\widetilde{F^G_{P,r}}$ is a direct summand of the projective functor $\FFTD_G$. Hence it is projective as well. Let $\{F_i\}$ be a set of objects of $\Fppk{\FF}$. By Lemma \ref{lem naturaltransformations}(i) there is an isomorphism
\begin{align*}
\bigoplus_{i} \Hom(\FFTD_G\widetilde{F^G_{P,r}}, F_i)\to \Hom\big(\FFTD_G\widetilde{F^G_{P,r}}, \bigoplus_i F_i\big)
\end{align*}
in $\Fppk{\FF}$. Hence $\FFTD_G\widetilde{F^G_{P,r}}$ is compact as well. 
\end{proof}

\begin{proposition}\label{prop generators}
Every functor in $\Fppk{\FF}$ is a quotient of a direct sum of representable functors of the form $\FFTD_{Q\langle v\rangle}\widetilde{F^{Q\langle v\rangle}_{Q,v}}$ where $(Q,v)$ is a $\DD$-pair.  
\end{proposition}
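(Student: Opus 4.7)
The plan is to combine the Yoneda lemma, an idempotent decomposition of the representable functors coming from the ring homomorphism $\widetilde{(\cdot)}\colon\FFT(G)\to\FFTD(G,G)$, and an identification of $\FFTD_G\widetilde{F^G_{P,r}}$ with the functor attached to the associated $\DD$-pair. The first move will be: by the Yoneda lemma, every $F\in\Fppk{\FF}$ admits an epimorphism $\bigoplus_{(G,m)}\FFTD_G\twoheadrightarrow F$ indexed by pairs $(G,m)$ with $m\in F(G)$, so it is enough to handle each representable $\FFTD_G$. Since $\widetilde{(\cdot)}$ is an algebra homomorphism, the orthogonal primitive decomposition $1=\sum_{(P,r)\in[\calQ_{G,p}]}F^G_{P,r}$ in $\FFT(G)$ lifts to an orthogonal decomposition $\id_{\FFTD_G}=\sum_{(P,r)}\widetilde{F^G_{P,r}}$ in $\End_{\Fppk{\FF}}(\FFTD_G)$, and hence $\FFTD_G=\bigoplus_{(P,r)}\FFTD_G\widetilde{F^G_{P,r}}$. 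Consequently, every $F$ is already a quotient of a direct sum of functors $\FFTD_G\widetilde{F^G_{P,r}}$.

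The heart of the proof will be to show that for any pair $(P,r)$ of $G$, the functor $\FFTD_G\widetilde{F^G_{P,r}}$ is isomorphic to $\FFTD_{\tilde N}\widetilde{F^{\tilde N}_{\tilde P,\tilde r}}$, where $(\tilde P,\tilde r)$ is the $\DD$-pair associated to $(P,r)$ and $\tilde N=\tilde P\langle\tilde r\rangle$. Setting $C=C_{\langle r\rangle}(P)$, a central $p'$-subgroup of $N=P\langle r\rangle$ with $P\cap C=1$, the quotient $N\twoheadrightarrow N/C=\tilde N$ restricts to a group isomorphism $\pi\colon P\to\tilde P$; let $\gamma=\pi^{-1}\colon\tilde P\to P$. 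One checks that $\Delta(P,\gamma,\tilde P)\le G\times\tilde N$ is a twisted diagonal subgroup normalized by $(r,\tilde r)$, so by Lemma~\ref{lem scalarmultiple} the primitive idempotents
$$B_\psi:=F^{G\times\tilde N}_{\Delta(P,\gamma,\tilde P),(r,\tilde r)},\qquad B_\phi:=F^{\tilde N\times G}_{\Delta(\tilde P,\gamma^{-1},P),(\tilde r,r)}$$
lie in the corners $\widetilde{F^G_{P,r}}\,\FFTD(G,\tilde N)\,\widetilde{F^{\tilde N}_{\tilde P,\tilde r}}$ and $\widetilde{F^{\tilde N}_{\tilde P,\tilde r}}\,\FFTD(\tilde N,G)\,\widetilde{F^G_{P,r}}$, respectively. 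By Lemma~\ref{lem naturaltransformations}(iii), they define natural transformations $\psi\colon\FFTD_G\widetilde{F^G_{P,r}}\to\FFTD_{\tilde N}\widetilde{F^{\tilde N}_{\tilde P,\tilde r}}$ and $\phi\colon\FFTD_{\tilde N}\widetilde{F^{\tilde N}_{\tilde P,\tilde r}}\to\FFTD_G\widetilde{F^G_{P,r}}$.

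To finish, one will show that $\phi$ and $\psi$ are mutually inverse by computing the bimodule tensor products $B_\psi\otimes_{k\tilde N}B_\phi$ and $B_\phi\otimes_{kG}B_\psi$ via the Brauer character formula of Corollary~\ref{cor brauercharacteroftensor}, and checking pointwise, on diagonal pairs of $G\times G$ and of $\tilde N\times\tilde N$, that they recover the idempotents $\widetilde{F^G_{P,r}}$ and $\widetilde{F^{\tilde N}_{\tilde P,\tilde r}}$, whose $\tau$-values are pinned down by Remark~\ref{rem conjugatepairs}. The main obstacle will be exactly this verification: it reduces to a careful orbit analysis of the triples $(\alpha,V,\beta)$ in $\Gamma_{\tilde N}(\,\cdot\,,\,\cdot\,,\,\cdot\,)$ (and the analogous set for $G$) together with the intermediate $p'$-elements $h\in\tilde N_{p'}$ or $g\in G_{p'}$, where the defining $\DD$-pair condition $C_{\langle\tilde r\rangle}(\tilde P)=1$ is used crucially to collapse the sum to a single surviving conjugacy class and obtain the stated equalities. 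Once the isomorphism is established, every $F\in\Fppk{\FF}$ becomes a quotient of a direct sum of functors $\FFTD_{\tilde N}\widetilde{F^{\tilde N}_{\tilde P,\tilde r}}$ indexed by $\DD$-pairs $(\tilde P,\tilde r)$, which is the desired conclusion.
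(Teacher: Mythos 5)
Your overall frame (Yoneda reduction to representables, then the idempotent decomposition $\id_{\FFTD_G}=\sum_{(P,r)}\widetilde{F^G_{P,r}}$) is fine, but the heart of your argument fails: the claimed isomorphism $\FFTD_G\widetilde{F^G_{P,r}}\cong \FFTD_{\tilde{P}\langle\tilde{r}\rangle}\widetilde{F^{\tilde{P}\langle\tilde{r}\rangle}_{\tilde{P},\tilde{r}}}$ is false in general, so the verification you defer to the end (``the main obstacle'') cannot be carried out. Concretely, compute $B_\phi\otimes_{kG}B_\psi$ with Corollary~\ref{cor brauercharacteroftensor}: taking $V=P$, $\beta=\gamma$, $\alpha=\gamma^{-1}i_{n^{-1}}$ and $h=r$ for any $n\in N_G(P,r)$ produces a nonzero (nonnegative, so non-cancelling) contribution to $\tau^{\tilde{N}\times\tilde{N}}_{\Delta(\tilde{P},\gamma^{-1}i_{n^{-1}}\gamma,\tilde{P}),(\tilde{r},\tilde{r})}(B_\phi\otimes_{kG}B_\psi)$, whereas by Remark~\ref{rem conjugatepairs} the idempotent $\widetilde{F^{\tilde{P}\langle\tilde{r}\rangle}_{\tilde{P},\tilde{r}}}$ is supported only on the class of $\big(\Delta\tilde{P},(\tilde{r},\tilde{r})\big)$. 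So whenever $N_G(P,r)$ induces automorphisms of $\tilde{P}$ that are not inner in $\tilde{P}\langle\tilde{r}\rangle$, the product is not a multiple of $\widetilde{F^{\tilde{P}\langle\tilde{r}\rangle}_{\tilde{P},\tilde{r}}}$; the $\DD$-pair condition constrains $\langle\tilde{r}\rangle$, not $N_G(P,r)$, and does not collapse these classes. Moreover no other choice of $\psi,\phi$ can help: for $p\ge 5$, $G=C_p\rtimes C_{p-1}$ acting faithfully and $(P,r)=(C_p,1)$, the functor $\FFTD_{C_p}\widetilde{F^{C_p}_{C_p,1}}$ has endomorphism algebra of dimension $p-1$ (it contains the $p-1$ pairwise distinct idempotents $F^{C_p\times C_p}_{\Delta(C_p,\theta,C_p),(1,1)}$, $\theta\in\Aut(C_p)$; see also Corollary~\ref{cor algebraisom}), while all pairs $\big(\Delta(P,\theta,P),(1,1)\big)$ of $G\times G$ are conjugate because $N_G(P)\to\Aut(P)$ is onto, so $\End(\FFTD_G\widetilde{F^G_{P,1}})$ is one-dimensional; the two functors are not isomorphic. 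In general $\FFTD_G\widetilde{F^G_{P,r}}$ is only a proper direct summand of $\FFTD_{\tilde{P}\langle\tilde{r}\rangle}\widetilde{F^{\tilde{P}\langle\tilde{r}\rangle}_{\tilde{P},\tilde{r}}}$, cut out by the image of $N_G(P,r)$ in $\Out(\tilde{P},\tilde{r})$.

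Since the proposition only asks for a quotient, your approach could be repaired by aiming lower: the same character computation shows that the other composite $B_\psi\otimes_{k\tilde{P}\langle\tilde{r}\rangle}B_\phi$ is supported only on the class of $\big(\Delta P,(r,r)\big)$ (the middle conjugations transport, via $\gamma$, into $P\langle r\rangle\le G$), hence is a scalar multiple of $\widetilde{F^G_{P,r}}$; if you prove that scalar is nonzero (a counting argument in the spirit of Lemma~\ref{lem scalarmultiple}, which you do not carry out), you get a split epimorphism $\FFTD_{\tilde{P}\langle\tilde{r}\rangle}\widetilde{F^{\tilde{P}\langle\tilde{r}\rangle}_{\tilde{P},\tilde{r}}}\twoheadrightarrow\FFTD_G\widetilde{F^G_{P,r}}$, which suffices and would give a proof genuinely different from the paper's. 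The paper instead argues by induction on $|G|$: if the essential algebra of $\FFTD$ at $G$ vanishes, $\FFTD_G$ is a quotient of representables at strictly smaller groups; otherwise $G=L\langle u\rangle$ for a $\DD$-pair by Theorem~3.3 of [BY20], and then each $\widetilde{F^G_{P,r}}$ with $P\langle r\rangle\neq G$ factors through a proper subgroup, while the idempotents with $P\langle r\rangle=G$ are already of the required form.
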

\begin{proof}
Since the representable functors generate the functor category $\Fppk{\FF}$, it suffices to prove the statement for the representable functors. Let $G$ be a finite group and consider the representable functor $\FFTD_G$. 

If the essential algebra $\FFTD(G,G)/ \left(\sum_{|H|<|G|} \FFTD(G,H)\circ \FFTD(H,G) \right)$ at $G$ vanishes, then the identity morphism of the functor $\FFTD_G$ can be written as a sum of morphisms $a_i \circ b_i$, where $b_i: G\to K_i$, $a_i: K_i\to G$ and $|K_i|< |G|$. Equivalently, the functor $\FFTD_G$ is a quotient of the direct sum $\oplus_i \FFTD_{K_i}$. By induction on the order of $G$, we can assume that the essential algebra at $G$ does not vanish, i.e., $G=L\langle u\rangle$ for some $\DD$-pair $(L,u)$ (see \cite[Theorem~3.3]{BoucYilmaz2020}).

Now the identity map of $\FFTD_G$ is equal to the sum of the idempotents $\widetilde{F^G_{P,r}}$ where $(P,r)$ runs in a set of conjugacy classes of pairs of $G$. If $P\langle r\rangle \neq G$, then the idempotent $F^G_{P,r}$ is induced from a proper subgroup and hence $\widetilde{F^G_{P,r}}$ factors through this subgroup. If $P\langle r\rangle=G$, then $P=L$ and $\langle r\rangle$ is conjugate to $\langle u\rangle$. In particular, $(P,r)$ is a $\DD$-pair. The result follows by induction on $|G|$. 
\end{proof}

\begin{notation}
Let $\calP$ denote the full subcategory of $\Fppk{\FF}$ consisting of the functors $\FFTD_{L\langle u\rangle}\widetilde{F^{L\langle u\rangle}_{L, u}}$, where $(L, u)$ runs through a set of isomorphism classes of $\DD$-pairs. 
\end{notation}

\begin{corollary}\label{cor equivalenceofcats}
The functor from $\Fppk{\FF}$ to $\Fun_{\FF}(\calP^{op}, \lMod{\FF})$ sending a functor $X$ to the representable functor $\Hom_{\Fppk{\FF}}(-,X)$ is an equivalence of categories. 
\end{corollary}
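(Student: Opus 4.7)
The plan is simply to apply Theorem~\ref{thm equivalenceofcats} with $R=\FF$, $\calA=\Fppk{\FF}$, and the set of objects~$\calP$ just defined. So the work reduces to checking the two hypotheses of that theorem for~$\calP$, plus the standing assumption that $\calA$ admits arbitrary direct sums.

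First, the category $\Fppk{\FF}$ of $\FF$-linear functors from $\FF pp_k^\Delta$ to $\lMod{\FF}$ is an $\FF$-linear abelian category with arbitrary direct sums, since direct sums of $\FF$-linear functors are computed pointwise in $\lMod{\FF}$. Next, hypothesis~(i) of Theorem~\ref{thm equivalenceofcats}, namely that every $P\in\calP$ is projective and compact in $\Fppk{\FF}$, is exactly the content of Lemma~\ref{lem projectiveandcompact}. Finally, hypothesis~(ii), that $\calP$ generates $\Fppk{\FF}$, is Proposition~\ref{prop generators}: every functor in $\Fppk{\FF}$ is a quotient of a direct sum of objects of~$\calP$.

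With both hypotheses in place, Theorem~\ref{thm equivalenceofcats} asserts that the functor
\begin{align*}
\calH\colon X\in\Fppk{\FF}\longmapsto \Hom_{\Fppk{\FF}}(-,X)\in \Fun_{\FF}(\calP^{\mathrm{op}},\lMod{\FF})
\end{align*}
is an equivalence of $\FF$-linear abelian categories, which is precisely the claim of the corollary.

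There is no real obstacle: the proof is a one-line invocation of Theorem~\ref{thm equivalenceofcats} once one notes that Lemma~\ref{lem projectiveandcompact} and Proposition~\ref{prop generators} supply its two hypotheses. The only point deserving attention is the distinction between $\calP$ as a full subcategory and as a set of (isomorphism class representatives of) objects; this is harmless because Theorem~\ref{thm equivalenceofcats} is phrased in terms of a set of objects of $\calA$, and $\calP$ was defined using a set of representatives for isomorphism classes of $\DD$-pairs $(L,u)$, hence for isomorphism classes of the corresponding functors $\FFTD_{L\langle u\rangle}\widetilde{F^{L\langle u\rangle}_{L,u}}$.
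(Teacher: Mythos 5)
Your proposal is correct and coincides with the paper's own proof, which likewise deduces the corollary by combining Lemma~\ref{lem projectiveandcompact}, Proposition~\ref{prop generators} and Theorem~\ref{thm equivalenceofcats}. Your extra remark about $\calP$ being a set of representatives is a harmless clarification and does not change the argument.
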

\begin{proof}
This follows from Lemma \ref{lem projectiveandcompact}, Proposition \ref{prop generators} and Theorem \ref{thm equivalenceofcats}.  
\end{proof}

Let $(L,u)$ and $(M,v)$ be $\DD$-pairs. By Lemma \ref{lem naturaltransformations} we have 
\begin{align*}
\Hom_{\calP}\big(\FFTD_{L\langle u\rangle}\widetilde{F^{L\langle u\rangle}_{L, u}}, \FFTD_{M\langle v\rangle}\widetilde{F^{M\langle v\rangle}_{M, v}}\big)\equiv \widetilde{F^{L\langle u\rangle}_{L, u}}\FFTD(L\langle u\rangle, M\langle v\rangle)\widetilde{F^{M\langle v\rangle}_{M, v}}\,.
\end{align*}
Therefore the category $\calP^{op}$ is isomorphic to the following category.
\begin{definition}
Let $\calD^{\Delta}$ denote the following category:
\begin{itemize}
\item objects are the isomorphism classes of $\DD$-pairs.
\item $\Hom_{\calD^{\Delta}}\big((L,u), (M,v)\big)=\widetilde{F^{M\langle v\rangle}_{M, v}}\FFTD(L\langle u\rangle, M\langle v\rangle)\widetilde{F^{L\langle u\rangle}_{L, u}}$.
\end{itemize}
\end{definition}
It follows from Theorem \ref{thm keypoint} that $\widetilde{F^{L\langle u\rangle}_{L, u}}\FFTD(L\langle u\rangle, M\langle v\rangle)\widetilde{F^{M\langle v\rangle}_{M, v}}$ is non-zero if and only if $(L,u)$ and $(M,v)$ are isomorphic. Hence our next aim is to understand the structure of $\widetilde{F^{L\langle u\rangle}_{L, u}}\FFTD(L\langle u\rangle, L\langle u\rangle)\widetilde{F^{L\langle u\rangle}_{L, u}}$. We start with some preliminary results.

\begin{notation}\label{OutLu}
{\rm (i)} Let $\Aut_u(L)$ denote the set of automorphisms $f$ of $L$ with the property that $f(\lexp{u}l)=\lexp{u}f(l)$.

\smallskip
{\rm (ii)} Let $\Inn(C_{L\langle u\rangle}(u))$ denote the normal subgroup of $\Aut_u(L)$ consisting of conjugations induced by the elements of $C_{L\langle u\rangle}(u)$.

\smallskip
{\rm (iii)} Let $\Aut(L,u)$ denote the set of automorphisms of $L\langle u\rangle$ that sends $u$ to a conjugate of $u$. 

\smallskip
{\rm (iv)} Let $\Inn(L\langle u\rangle)$ denote the normal subgroup of $\Aut(L,u)$ consisting of conjugations induced by elements of $L\langle u\rangle$. 

\smallskip
{\rm (v)} Set $\Out(L,u):=\Aut(L,u)/\Inn(L\langle u\rangle)$.
\end{notation}

\begin{Remark}\label{rem remarkonpairs}
For any $f\in\Aut_u(L)$ the pair $\big(\Delta(L,f,L),(u,u)\big)$ is a pair of $L\langle u\rangle$. Moreover, for any $f_1,f_2\in \Aut_u(L)$ the pairs $\big(\Delta(L,f_1,L),(u,u)\big)$ and $\big(\Delta(L,f_2,L),(u,u)\big)$ are conjugate if and only if $f_1f_2^{-1}\in\Inn(C_{L\langle u\rangle}(u))$.
\end{Remark}

We extend any $f\in\Aut_u(L)$ to $f\in\Aut(L\langle u\rangle)$ by defining $f(lu^i):=f(l)u^i$. This induces an embedding $\Aut_u(L)\hookrightarrow \Aut(L,u)$. We identify $\Aut_u(L)$ by its image in $\Aut(L,u)$. 

\begin{lemma}
We have $\Out(L,u)\cong \Aut_u(L)/\Inn(C_{L\langle u\rangle}(u))$.
\end{lemma}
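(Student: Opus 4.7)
The plan is to show that the composition
$$\Aut_u(L) \hookrightarrow \Aut(L,u) \twoheadrightarrow \Out(L,u)$$
is surjective with kernel exactly $\Inn(C_{L\langle u\rangle}(u))$, and then invoke the first isomorphism theorem.

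\medskip

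\textbf{Surjectivity.} Let $\phi \in \Aut(L,u)$. By definition $\phi(u) = \lexp{x}u$ for some $x \in L\langle u\rangle$, so the composition $\psi := i_x^{-1} \circ \phi \in \Aut(L,u)$ fixes $u$. Since $L\langle u\rangle = L \rtimes \langle u\rangle$ with $L$ a $p$-group and $\langle u\rangle$ a $p'$-group, $L$ is the unique Sylow $p$-subgroup of $L\langle u\rangle$, hence characteristic, so $\psi(L) = L$. Let $f = \psi|_L$. Then for any $l \in L$ we have
$$f(\lexp{u}l) = \psi(ulu^{-1}) = \psi(u)\psi(l)\psi(u)^{-1} = u f(l) u^{-1} = \lexp{u}f(l),$$
so $f \in \Aut_u(L)$, and $\psi(lu^i) = f(l)u^i$ because $\psi$ fixes $u$. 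Hence the image of $f$ under the embedding $\Aut_u(L) \hookrightarrow \Aut(L,u)$ is exactly $\psi$, which differs from $\phi$ by the inner automorphism $i_x$. This shows the composition is surjective.

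\medskip

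\textbf{Kernel.} We must show $\Aut_u(L) \cap \Inn(L\langle u\rangle) = \Inn(C_{L\langle u\rangle}(u))$, where the intersection takes place inside $\Aut(L,u)$ via the embedding. If $x \in C_{L\langle u\rangle}(u)$, then $x$ commutes with $u$, so for $l \in L$ and any $i$,
$$i_x(lu^i) = x l u^i x^{-1} = (xlx^{-1}) u^i,$$
which is precisely the extension of $i_x|_L$ to $L\langle u\rangle$; moreover $i_x|_L \in \Aut_u(L)$ since $i_x$ and $i_u$ commute. Hence $i_x$ lies in the image of $\Aut_u(L)$. Conversely, if $f \in \Aut_u(L)$ equals $i_y$ for some $y \in L\langle u\rangle$, then by construction the extension fixes $u$, which forces $\lexp{y}u = u$, i.e.\ $y \in C_{L\langle u\rangle}(u)$.

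\medskip

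The first isomorphism theorem then yields $\Aut_u(L)/\Inn(C_{L\langle u\rangle}(u)) \cong \Out(L,u)$. The main (very mild) obstacle is the surjectivity argument, where one has to notice that $L$ is characteristic in $L\langle u\rangle$ so that an automorphism fixing $u$ automatically restricts to one of $L$; everything else is formal.
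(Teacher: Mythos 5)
Your proof is correct and follows essentially the same route as the paper: the paper writes $\phi=i_g\circ(i_{g^{-1}}\circ\phi)$ to get $\Aut(L,u)=\Inn(L\langle u\rangle)\Aut_u(L)$ and asserts $\Inn(L\langle u\rangle)\cap\Aut_u(L)=\Inn(C_{L\langle u\rangle}(u))$, then applies the second isomorphism theorem, which is exactly your surjectivity-plus-kernel computation phrased via the first isomorphism theorem. Your extra details (that $L$ is characteristic, and the explicit verification of the intersection) are correct fillings-in of steps the paper leaves implicit.
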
 
\begin{proof}
Let $\phi\in \Aut(L,u)$ be an automorphism and let $g\in L\langle u\rangle$ be an element with the property that $\phi(u)=\lexp{g}u$. Set $f:=i_{g^{-1}}\circ \phi$. Then $f\in\Aut_u(L)$ and $\phi=i_g\circ f \in \Aut(L,u)$. This shows that $\Aut(L,u)=\Inn(L\langle u\rangle)\Aut_u(L)$. We also have $$\Inn(L\langle u\rangle)\cap \Aut_u(L)=\Inn(C_{L\langle u\rangle}(u))\,.$$
Hence the result follows from the second isomorphism theorem. 
\end{proof}

\begin{lemma}\label{lem conjugatesofcomposition}
Let $\left(\Delta(P,\pi,L),(s,u)\right)$ be a pair of $G\times L\langle u\rangle$, let $\gamma_1, \gamma_2 \in \Aut_u(L)$, let $x,y\in C_{L\langle u\rangle}(u)$ and let $z\in L\langle u\rangle$.

\smallskip
{\rm(i)}
The subgroups $\Delta(P,\pi\gamma_1,L)$ and $\Delta(P,\pi i_z\gamma_1,L)$ of $G\times L\langle u\rangle$ are conjugate.

\smallskip
{\rm (ii)} The pairs $\left(\Delta(P,\pi\gamma_1,L),(s,u)\right)$ and $\left(\Delta(P,\pi i_x\gamma_1,L),(s,u)\right)$ of $G\times L\langle u\rangle$ are conjugate.

\smallskip
{\rm (iii)} The pairs $\big(\Delta(L, \gamma_2 i_x \gamma_1, L), (u,u)\big)$ and $\big(\Delta(L, \gamma_2 i_y \gamma_1, L), (u,u)\big)$ of $L\langle u\rangle \times L\langle u\rangle$ are conjugate. In particular, the pairs $\big(\Delta(L, \gamma_2 i_x \gamma_1, L), (u,u)\big)$ and $\big(\Delta(L, \gamma_2 \gamma_1, L), (u,u)\big)$ are conjugate. 
\end{lemma}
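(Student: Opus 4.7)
The three statements form a cascade: (i) is the fundamental conjugation identity for subgroups, (ii) refines it by constraining the conjugator to lie in the centralizer of $(s,u)$, and (iii) is (ii) applied twice.

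For (i), the plan is to produce an explicit conjugator of the form $(1,b)\in G\times L\langle u\rangle$. Since $L$ is normal in $L\langle u\rangle$, a direct computation gives, for any $(a,b)\in G\times L\langle u\rangle$,
\[
\lexp{(a,b)}\Delta(P,\pi\gamma_1,L)\;=\;\Delta(\lexp{a}P,\,i_a\circ\pi\gamma_1\circ i_b^{-1},\,L).
\]
Taking $a=1$, it suffices to find $b\in L\langle u\rangle$ with $\gamma_1\circ i_b^{-1}=i_z\circ\gamma_1$ on $L$, i.e.\ $i_b^{-1}=\gamma_1^{-1}\circ i_z\circ\gamma_1$ on $L$. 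Writing $z=lu^i$ with $l\in L$ and $i\in\ZZ$, and using that $\gamma_1\in\Aut_u(L)$ commutes with $i_u$ on $L$ (by the very definition of $\Aut_u(L)$), one computes
\[
\gamma_1^{-1}\,i_z\,\gamma_1\;=\;\gamma_1^{-1}\,i_l\,i_{u^i}\,\gamma_1\;=\;i_{\gamma_1^{-1}(l)}\,i_{u^i}\;=\;i_{\gamma_1^{-1}(l)\,u^i}\qquad\text{on }L,
\]
which forces the choice $b:=u^{-i}\gamma_1^{-1}(l^{-1})$; a short verification closes (i).

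For (ii), I would check that the $b$ constructed in (i) automatically lies in $C_{L\langle u\rangle}(u)$ when $z=x\in C_{L\langle u\rangle}(u)$. Writing $x=lu^i$, the condition $x\in C_{L\langle u\rangle}(u)$ forces $l\in C_L(u)$; since $\gamma_1^{-1}$ commutes with $i_u$ on $L$, we get $\gamma_1^{-1}(l)\in C_L(u)$, and hence $b=u^{-i}\gamma_1^{-1}(l^{-1})\in C_{L\langle u\rangle}(u)$. The element $(1,b)$ then fixes $(s,u)$ and performs the same conjugation of subgroups as in (i), giving conjugacy of pairs.

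Finally, (iii) follows formally from (ii): apply (ii) with $G=L\langle u\rangle$, $P=L$, $\pi=\gamma_2$, and $s=u$, after observing that $(\Delta(L,\gamma_2,L),(u,u))$ is a valid pair of $L\langle u\rangle\times L\langle u\rangle$ because $\gamma_2\in\Aut_u(L)$. This shows $(\Delta(L,\gamma_2\gamma_1,L),(u,u))$ is conjugate to both $(\Delta(L,\gamma_2 i_x\gamma_1,L),(u,u))$ and $(\Delta(L,\gamma_2 i_y\gamma_1,L),(u,u))$, and transitivity finishes the main claim; the ``in particular'' statement is just the case $y=1$. The one step that requires any thought is isolating the correct $b$ in (i); once the $\Aut_u$ condition is used to separate the $u^i$-part of $z$ from its $L$-part, (ii) reduces to a centralizer check and (iii) is pure bookkeeping.
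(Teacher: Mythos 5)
Your proof is correct and follows essentially the same route as the paper: your conjugator $(1,b)$ with $b=u^{-i}\gamma_1^{-1}(l^{-1})$ is exactly the paper's choice $a=\gamma_1^{-1}(z^{-1})$ once $\gamma_1$ is extended to an automorphism of $L\langle u\rangle$ by $\gamma_1(lu^i)=\gamma_1(l)u^i$, and the paper likewise disposes of (ii) and (iii) as "proved similarly," which your centralizer check and transitivity argument simply spell out.
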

\begin{proof}
Let $a:=\gamma_1^{-1}(z^{-1})$. Then we have
\begin{align*}
\lexp{(1,a)}{\Delta(P,\pi\gamma_1,L)}=\Delta(P,\pi i_z\gamma_1,L)
\end{align*}
which proves {\rm (i)}. The other parts are proved similarly.
\end{proof}

\begin{lemma}\label{lem tensorprodofidempotents}
Let $G$ be a finite group and $(L,u)$ a $\DD$-pair. Let $\big(\Delta(P, \phi, L), (s,u)\big)$ be a pair of $G\times L\langle u\rangle$ and let $\big(\Delta(L, \gamma, L), (u,u)\big)$ be a pair of $L\langle u\rangle\times L\langle u\rangle$. Then
\begin{align*}
F^{G\times L\langle u\rangle}_{\Delta(P,\phi, L),(s,u)}\otimes_{kL\langle u\rangle} F^{L\langle u\rangle\times L\langle u\rangle}_{\Delta(L,\gamma, L),(u,u)}=\frac{1}{|Z(L\langle u\rangle)|}F^{G\times L\langle u\rangle}_{\Delta(P,\phi\gamma, L),(s,u)} \quad \text{in} \quad \FFTD(G,L\langle u\rangle)\,.
\end{align*}
\end{lemma}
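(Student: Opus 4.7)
The plan is to mirror the proof of Lemma~\ref{lem scalarmultiple}: for an arbitrary pair $\big(\Delta(P',\phi',L'),(s',u')\big)$ of $G\times L\langle u\rangle$, expand
$$\tau^{G\times L\langle u\rangle}_{\Delta(P',\phi',L'),(s',u')}\Big(F^{G\times L\langle u\rangle}_{\Delta(P,\phi,L),(s,u)}\otimes_{kL\langle u\rangle}F^{L\langle u\rangle\times L\langle u\rangle}_{\Delta(L,\gamma,L),(u,u)}\Big)$$
using Corollary~\ref{cor brauercharacteroftensor} as a sum over $(\alpha,V,\beta)\in\Gamma_{L\langle u\rangle}(P',\phi',L')$ and $h\in L\langle u\rangle_{p'}$ (satisfying the normalizer conditions) of products of two $\tau$-values, each evaluating to $0$ or $1$. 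The first factor equals $1$ precisely when $\big(\Delta(P',\alpha,V),(s',h)\big)$ is $G\times L\langle u\rangle$-conjugate to $\big(\Delta(P,\phi,L),(s,u)\big)$, and the second precisely when $\big(\Delta(V,\beta,L'),(h,u')\big)$ is $L\langle u\rangle\times L\langle u\rangle$-conjugate to $\big(\Delta(L,\gamma,L),(u,u)\big)$.

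These two conditions force $V=L=L'$, $(P',s')\sim_G(P,s)$, and $u'\sim_{L\langle u\rangle}u$, and they yield parametrizations $\alpha=i_{g_1^{-1}}\phi\,i_{g_2}$ and $\beta=i_{l_1^{-1}}\gamma\,i_{l_2}$ with $h=\lexp{g_2^{-1}}u=\lexp{l_1^{-1}}u$, where $g_1\in C_G(s)\cap N_G(P)$, $g_2,l_1\in L\langle u\rangle$, and $l_2\in C_{L\langle u\rangle}(u)$. Setting $c:=g_2l_1^{-1}\in C_{L\langle u\rangle}(u)$, the identity $\alpha\beta=\phi'$ then forces $\phi'=i_{g_1^{-1}}\phi\,i_c\gamma\,i_{l_2}$. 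Conjugating $\big(\Delta(P',\phi',L'),(s',u')\big)$ first by $(g_1,1)$ (valid since $g_1\in C_G(s)\cap N_G(P)$) and then by $(1,l_2)$ (valid since $l_2\in C_{L\langle u\rangle}(u)$ and $u'=\lexp{l_2^{-1}}u$) yields the pair $\big(\Delta(P,\phi\,i_c\gamma,L),(s,u)\big)$. Since $c\in C_{L\langle u\rangle}(u)$, Lemma~\ref{lem conjugatesofcomposition}(ii) then gives
$$\big(\Delta(P',\phi',L'),(s',u')\big)\sim_{G\times L\langle u\rangle}\big(\Delta(P,\phi\gamma,L),(s,u)\big).$$
Consequently the left-hand side is a scalar multiple of $F^{G\times L\langle u\rangle}_{\Delta(P,\phi\gamma,L),(s,u)}$.

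To identify the scalar, I would evaluate $\tau^{G\times L\langle u\rangle}_{\Delta(P,\phi\gamma,L),(s,u)}$ on the left-hand side and count the tuples $(\alpha,L,\beta,h)$ that contribute $1$. The main obstacle is precisely this combinatorial count: each admissible tuple is represented by multiple parameter quadruples $(g_1,g_2,l_1,l_2)$, and the redundancy needs to be reconciled with the normalizing factor $1/|L\langle u\rangle|$ from Corollary~\ref{cor brauercharacteroftensor}. The expected total is $|L\langle u\rangle|/|Z(L\langle u\rangle)|$, so that the resulting scalar is $1/|Z(L\langle u\rangle)|$; the appearance of $|Z(L\langle u\rangle)|=|C_{L\langle u\rangle}(L\langle u\rangle)|$ reflects the induction identity $\widetilde{F^{L\langle u\rangle}_{L,u}}=|Z(L\langle u\rangle)|\,F^{L\langle u\rangle\times L\langle u\rangle}_{\Delta L,(u,u)}$ (cf.~Remark~\ref{rem conjugatepairs} and \cite[Proposition~3.2]{BoucThevenaz2010}), and parallels the computation of the scalar $\lambda=1$ at the end of the proof of Lemma~\ref{lem scalarmultiple}.
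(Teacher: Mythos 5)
Your first step (showing the product is supported on the single conjugacy class of $\big(\Delta(P,\phi\gamma,L),(s,u)\big)$, via Corollary~\ref{cor brauercharacteroftensor} and Lemma~\ref{lem conjugatesofcomposition}(ii)) follows the paper's route and is sound in outline, although your parametrization is slightly off: conjugating $\big(\Delta(P',\alpha,V),(s',h)\big)$ to $\big(\Delta(P,\phi,L),(s,u)\big)$ only forces $\lexp{g_1}s'=s$ and $\lexp{g_1}P'=P$, not $g_1\in C_G(s)\cap N_G(P)$; this is repairable but should be stated correctly.

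The genuine gap is the scalar. You explicitly leave the combinatorial count unperformed ("the main obstacle is precisely this combinatorial count") and only assert the expected total $|L\langle u\rangle|/|Z(L\langle u\rangle)|$; but identifying the constant is the whole content of the lemma beyond the support statement, and neither of your suggested shortcuts closes it. The idempotency trick from the end of Lemma~\ref{lem scalarmultiple} does not apply here: the composite is not an idempotent (indeed the answer is $1/|Z(L\langle u\rangle)|\neq 1$), and the two factors are primitive idempotents of different $p$-permutation rings composed via $\otimes_{kL\langle u\rangle}$, so no relation of the form $\lambda^2=\lambda$ is available; likewise the identity $\widetilde{F^{L\langle u\rangle}_{L,u}}=|Z(L\langle u\rangle)|\,F^{L\langle u\rangle\times L\langle u\rangle}_{\Delta L,(u,u)}$ explains why the factor $|Z(L\langle u\rangle)|$ is plausible but proves nothing about this particular product. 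The paper settles the point by carrying out the count: the nonvanishing conditions force $g\in N_G(P,s)$, $l_3,\,l_2^{-1}l_1\in C_{L\langle u\rangle}(u)$, and $\phi\gamma=i_g\phi\gamma i_x$ with $x=\gamma^{-1}(l_1^{-1}l_2)l_3^{-1}$; the number of admissible quadruples $(g,l_1,l_2,l_3)$ is $|C_G(P,s)|\,|C_{L\langle u\rangle}(u)|^2\,|L\langle u\rangle|$, while each fixed datum $(\alpha,L,\beta,c)$ arises from $|C_{L\langle u\rangle}(u)|^2\,|Z(L\langle u\rangle)|\,|C_G(P,s)|$ of them, so after the prefactor $1/|L\langle u\rangle|$ the value of $\tau^{G\times L\langle u\rangle}_{\Delta(P,\phi\gamma,L),(s,u)}$ on the product is exactly $1/|Z(L\langle u\rangle)|$. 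Without this (or an equivalent) computation your argument establishes only that the left-hand side is \emph{some} scalar multiple of $F^{G\times L\langle u\rangle}_{\Delta(P,\phi\gamma,L),(s,u)}$, which is strictly weaker than the lemma.
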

\begin{proof}
Using Corollary \ref{cor brauercharacteroftensor} one shows that if $\big(\Delta(R,\sigma,L), (t,u^i)\big)$ is a pair of $G\times L\langle u\rangle$ with the property that
\begin{align*}
\tau^{G\times L\langle u\rangle}_{\Delta(R, \sigma,L),(t,u^i)}\big(F^{G\times L\langle u\rangle}_{\Delta(P,\phi, L),(s,u)}\otimes_{kL\langle u\rangle} F^{L\langle u\rangle\times L\langle u\rangle}_{\Delta(L,\gamma, L),(u,u)}\big)\neq 0
\end{align*}
then $\big(\Delta(R,\sigma,L), (t,u^i)\big)$ is conjugate to a pair of the form $\big(\Delta(P,\phi i_x\gamma,L), (s,u)\big)$ where $x\in C_{L\langle u\rangle}(u)$. Lemma \ref{lem conjugatesofcomposition} implies that in this case that the pair  $\big(\Delta(R,\sigma,L), (t,u^i)\big)$ is conjugate to $\big(\Delta(P,\phi \gamma,L), (s,u)\big)$.

Now by Corollary \ref{cor brauercharacteroftensor} again, we have
\begin{align*}
&\tau^{G \times L\langle u\rangle}_{\Delta(P,\phi\gamma, L),(s,u)}\big(F^{G\times L\langle u\rangle}_{\Delta(P,\phi, L),(s,u)}\otimes_{kL\langle u\rangle} F^{L\langle u\rangle\times L\langle u\rangle}_{\Delta(L,\gamma, L),(u,u)}\big)\\&
=\!\frac{1}{|L\langle u\rangle|}\!\!\!\!\!\sum_{\substack{\rule{0ex}{2ex}(\alpha,L,\beta)\in \Gamma_{L\langle u\rangle}(P,\phi\gamma,L)\\\rule{0ex}{1.6ex} c\in \langle u\rangle\\\rule{0ex}{1.6ex}(s,c)\in N_{G\times L\langle u\rangle}(\Delta(P,\alpha,L))\\\rule{0ex}{1.6ex}(c,u)\in N_{L\langle u\rangle\times L\langle u\rangle}(\Delta(L,\beta,L))}}\!\!\!\!\!\tau_{\Delta(P,\alpha,L),(s,c)}^{G\times L\langle u\rangle}\big(F^{G\times L\langle u\rangle}_{\Delta(P,\phi, L),(s,u)}\big)\,\tau_{\Delta(L,\beta,L),(c,u)}^{L\langle u\rangle\times L\langle u\rangle}\big(F^{L\langle u\rangle\times L\langle u\rangle}_{\Delta(L,\gamma, L),(u,u)}\big)\,.
\end{align*}
The product
\begin{align*}
\tau_{\Delta(P,\alpha,L),(s,c)}^{G\times L\langle u\rangle}\big(F^{G\times L\langle u\rangle}_{\Delta(P,\phi, L),(s,u)}\big)\,\tau_{\Delta(L,\beta,L),(c,u)}^{L\langle u\rangle\times L\langle u\rangle}\big(F^{L\langle u\rangle\times L\langle u\rangle}_{\Delta(L,\gamma, L),(u,u)}\big)
\end{align*}
is non-zero if and only if there exist $g\in G$ and $l_1,l_2,l_3\in L\langle u\rangle$ such that
\begin{align*}
\big(\Delta(P,\alpha,L),(s,c)\big)=\lexp{(g,l_1)}{\big(\Delta(P,\phi, L),(s,u)\big)}=\big(\Delta(\lexp{g}P, i_{g}\phi i_{l_1}^{-1}, L), (\lexp{g}s,\lexp{l_1}u)\big)
\end{align*}
and
\begin{align*}
\big(\Delta(L,\beta,L),(c,u)\big)=\lexp{(l_2,l_3)}{\big(\Delta(L,\gamma, L),(u,u)\big)}=\big(\Delta(L, i_{l_2}\gamma i_{l_3}^{-1}, L), (\lexp{l_2}u,\lexp{l_3}u)\big)
\end{align*}
hold. These conditions imply that 
\begin{align*}
g\in N_G(P,s)\,, \quad l_3, l_2^{-1}l_1 \in C_{L\langle u\rangle}(u)\,, \quad \alpha=i_{g}\phi i_{l_1}^{-1} \quad \text{and}\quad \beta=i_{l_2}\gamma i_{l_3}^{-1}\,.
\end{align*}
Moreover, the condition that $\phi\gamma=\alpha\beta$ implies that
\begin{align*}
\phi\gamma=i_g\phi\gamma i_x
\end{align*}
as maps from $L$ to $P$, where $x=\gamma^{-1}(l_1^{-1}l_2)l_3^{-1}$. 

The number of quadruples $(g, l_1, l_2, l_3)$ that satisfy these conditions is
\begin{align*}
|C_G(P,s)||C_{L\langle u\rangle}(u)|^2 |L\langle u\rangle|\,,
\end{align*}
where $C_G(P,s):= N_G(P,s) \cap C_G(P)$.
However, when $(\alpha, L,\beta)\in \Gamma_{L\langle u\rangle}$ and $c\in\langle u\rangle$ are fixed there are $|C_{L\langle u\rangle}(u)|^2 |Z(L\langle u\rangle)| |C_G(P,s)|$ quadruples with these properties. Therefore, we have
\begin{align*}
\tau^{G \times L\langle u\rangle}_{\Delta(P,\phi\gamma, L),(s,u)}\big(F^{G\times L\langle u\rangle}_{\Delta(P,\phi, L),(s,u)}\otimes_{kL\langle u\rangle} F^{L\langle u\rangle\times L\langle u\rangle}_{\Delta(L,\gamma, L),(u,u)}\big)&=\frac{|C_G(P,s)||C_{L\langle u\rangle}(u)|^2 |L\langle u\rangle|}{|L\langle u\rangle||C_{L\langle u\rangle}(u)|^2 |Z(L\langle u\rangle)| |C_G(P,s)|}\\&
=\frac{1}{|Z(L\langle u\rangle)|} \,.
\end{align*}
This completes the proof.
\end{proof}

\begin{proposition}\label{prop algebragenerated}
Let $(L,u)$ be a $\DD$-pair. Then $\widetilde{F^{L\langle u\rangle}_{L, u}}\FFTD(L\langle u\rangle, L\langle u\rangle)\widetilde{F^{L\langle u\rangle}_{L, u}}$ is equal to the $\FF$-algebra generated by the elements of the form $F^{L\langle u\rangle\times L\langle u\rangle}_{\Delta(L,\gamma, L),(u,u)}$. In particular, the $\FF$-dimension of $\widetilde{F^{L\langle u\rangle}_{L, u}}\FFTD(L\langle u\rangle, L\langle u\rangle)\widetilde{F^{L\langle u\rangle}_{L, u}}$ is equal to the cardinality of $\Out(L,u)$.
\end{proposition}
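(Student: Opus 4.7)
The plan is to identify an explicit $\FF$-basis of the algebra $\widetilde{F^{L\langle u\rangle}_{L,u}}\FFTD(L\langle u\rangle,L\langle u\rangle)\widetilde{F^{L\langle u\rangle}_{L,u}}$. Since $\FFTD(L\langle u\rangle,L\langle u\rangle)$ has an $\FF$-basis consisting of the primitive idempotents $F^{L\langle u\rangle\times L\langle u\rangle}_{\Delta(P,\phi,Q),(s,t)}$ indexed by $(L\langle u\rangle\times L\langle u\rangle)$-conjugacy classes of diagonal pairs, I would first determine which of them survive the sandwich
$$\widetilde{F^{L\langle u\rangle}_{L,u}}\otimes_{kL\langle u\rangle} F^{L\langle u\rangle\times L\langle u\rangle}_{\Delta(P,\phi,Q),(s,t)}\otimes_{kL\langle u\rangle}\widetilde{F^{L\langle u\rangle}_{L,u}}.$$
Applying Corollary~\ref{cor brauercharacteroftensor} twice exactly as in the proof of Lemma~\ref{lem scalarmultiple}, the left tensor picks up nonzero contributions only on pairs on which $\widetilde{F^{L\langle u\rangle}_{L,u}}$ has nonzero Brauer value; by Remark~\ref{rem conjugatepairs} these are precisely the pairs $(L\langle u\rangle\times L\langle u\rangle)$-conjugate to $(\Delta L,(u,u))$. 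The analogous analysis on the right tensor then forces both $(P,s)$ and $(Q,t)$ to be $L\langle u\rangle$-conjugate to $(L,u)$ for the sandwich to be nonzero.

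Once both pairs are conjugated to $(L,u)$, so that one may assume $P=Q=L$ and $s=t=u$, the condition that $(u,u)$ normalise $\Delta(L,\phi,L)$ forces $\phi(\lexp{u}l)=\lexp{u}\phi(l)$ for all $l\in L$, i.e., $\phi\in\Aut_u(L)$. Lemma~\ref{lem scalarmultiple} gives directly that the sandwich of the corresponding primitive idempotent is itself, showing both that each $F^{L\langle u\rangle\times L\langle u\rangle}_{\Delta(L,\phi,L),(u,u)}$ lies in the sandwiched algebra and that together these span it. By Remark~\ref{rem remarkonpairs}, two such idempotents coincide precisely when $\phi_1\phi_2^{-1}\in\Inn(C_{L\langle u\rangle}(u))$, so the number of distinct ones obtained is $|\Aut_u(L)/\Inn(C_{L\langle u\rangle}(u))|=|\Out(L,u)|$; linear independence comes from their being part of the primitive-idempotent basis of $\FFTD(L\langle u\rangle,L\langle u\rangle)$, which yields the dimension count.

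Finally, to upgrade ``span'' to the subalgebra generated by these elements, apply Lemma~\ref{lem tensorprodofidempotents} with $G=L\langle u\rangle$:
$$F^{L\langle u\rangle\times L\langle u\rangle}_{\Delta(L,\phi,L),(u,u)}\otimes_{kL\langle u\rangle}F^{L\langle u\rangle\times L\langle u\rangle}_{\Delta(L,\gamma,L),(u,u)}=\frac{1}{|Z(L\langle u\rangle)|}F^{L\langle u\rangle\times L\langle u\rangle}_{\Delta(L,\phi\gamma,L),(u,u)},$$
so the $\FF$-span is already closed under composition and hence coincides with the subalgebra generated by the $F^{L\langle u\rangle\times L\langle u\rangle}_{\Delta(L,\gamma,L),(u,u)}$. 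The main obstacle is the careful bookkeeping in the double application of Corollary~\ref{cor brauercharacteroftensor} needed to establish the vanishing criterion $(P,s)\sim_{L\langle u\rangle}(L,u)$ and $(Q,t)\sim_{L\langle u\rangle}(L,u)$; once that is in hand, the enumeration via Remark~\ref{rem remarkonpairs} and the multiplicative closure via Lemma~\ref{lem tensorprodofidempotents} are essentially formal.
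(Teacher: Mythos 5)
Your proposal is correct and follows essentially the same route as the paper: both arguments run the sandwich through the idempotent basis of $\FFTD(L\langle u\rangle,L\langle u\rangle)$, use Corollary~\ref{cor brauercharacteroftensor} together with Remark~\ref{rem conjugatepairs} to see that only idempotents attached to pairs conjugate to some $\big(\Delta(L,\gamma,L),(u,u)\big)$ with $\gamma\in\Aut_u(L)$ survive, and then count them via Remark~\ref{rem remarkonpairs}. Your shortcuts of quoting Lemma~\ref{lem scalarmultiple} (to get the scalar $1$) and Lemma~\ref{lem tensorprodofidempotents} (for multiplicative closure) are legitimate minor streamlinings of the same computation.
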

\begin{proof}
Let $\big(\Delta(L,\gamma, L),(u^i,u^j)\big)$ be a pair of $L\langle u\rangle\times L\langle u\rangle$ such that
\begin{align*}
\widetilde{F^{L\langle u\rangle}_{L, u}} F^{L\langle u\rangle\times L\langle u\rangle}_{\Delta(L,\gamma, L),(u^i,u^j)}\widetilde{F^{L\langle u\rangle}_{L, u}} \neq 0\,.
\end{align*}
Then there exists a pair $\big(\Delta(L, \varphi,L),(s,t)\big)$ of $L\langle u\rangle\times L\langle u\rangle$ such that
\begin{align*}
\tau^{L\langle u\rangle\times L\langle u\rangle}_{\Delta(L, \varphi,L),(s,t)}\big(\widetilde{F^{L\langle u\rangle}_{L, u}} F^{L\langle u\rangle\times L\langle u\rangle}_{\Delta(L,\gamma, L),(u^i,u^j)}\widetilde{F^{L\langle u\rangle}_{L, u}}\big)\neq 0\,.
\end{align*}
By Corollary \ref{cor brauercharacteroftensor}, there exists $(\alpha, L,\beta)\in\Gamma_{L\langle u\rangle}(L,\varphi, L)$ and $a\in \langle u\rangle$ such that $(s,a)\in N_{L\langle u\rangle\times L\langle u\rangle}(\Delta(L,\alpha,L)$, $(a,t)\in N_{L\langle u\rangle\times L\langle u\rangle}(\Delta(L,\beta,L)$ and
\begin{align*}
\tau^{L\langle u\rangle\times L\langle u\rangle}_{\Delta(L, \alpha,L),(s,a)}\big(\widetilde{F^{L\langle u\rangle}_{L, u}}\big)\cdot \tau^{L\langle u\rangle\times L\langle u\rangle}_{\Delta(L, \beta,L),(a,t)}\big(F^{L\langle u\rangle\times L\langle u\rangle}_{\Delta(L,\gamma, L),(u^i,u^j)}\widetilde{F^{L\langle u\rangle}_{L, u}}\big)\neq 0\,.
\end{align*}
This implies, in particular, that 
\begin{align}\label{eqn first}
\big(\Delta(L, \alpha,L),(s,a)\big)=_{L\langle u\rangle\times L\langle u\rangle} \big(\Delta L, (u,u)\big)\,.
\end{align}
Moreover, applying Corollary \ref{cor brauercharacteroftensor} again, there exists $(\phi, L,\psi)\in\Gamma_{L\langle u\rangle}(L,\beta, L)$ and $b\in \langle u\rangle$ such that $(a,b)\in N_{L\langle u\rangle\times L\langle u\rangle}(\Delta(L,\phi,L)$, $(b,t)\in N_{L\langle u\rangle\times L\langle u\rangle}(\Delta(L,\psi,L)$ and 
\begin{align*}
\tau^{L\langle u\rangle\times L\langle u\rangle}_{\Delta(L, \phi,L),(a,b)}\big(F^{L\langle u\rangle\times L\langle u\rangle}_{\Delta(L,\gamma, L),(u^i,u^j)}\big)\cdot \tau^{L\langle u\rangle\times L\langle u\rangle}_{\Delta(L, \psi,L),(b,t)}\big(\widetilde{F^{L\langle u\rangle}_{L, u}}\big)\neq 0\,.
\end{align*}
Therefore
\begin{align}\label{eqn second}
\big(\Delta(L, \phi,L),(a,b)\big)=_{L\langle u\rangle\times L\langle u\rangle}\big(\Delta(L,\gamma, L),(u^i,u^j)\big)
\end{align}
and
\begin{align}\label{eqn third}
\big(\Delta(L, \psi,L),(b,t)\big)=_{L\langle u\rangle\times L\langle u\rangle} \big(\Delta L,(u,u)\big)\,. 
\end{align}
Now using the conditions (\ref{eqn first}), (\ref{eqn second}) and (\ref{eqn third}) one shows that
\begin{align*}
\big(\Delta(L,\varphi,L),(s,t)\big)=_{L\langle u\rangle\times L\langle u\rangle} \big(\Delta(L, \gamma, L), (u^i,u^j)\big)\,.
\end{align*}
This means that the algebra maps $\tau^{L\langle u\rangle\times L\langle u\rangle}_{\Delta(L, \varphi,L),(s,t)}$ and $\tau^{L\langle u\rangle\times L\langle u\rangle}_{\Delta(L, \gamma, L),(u^i,u^j)}$ are equal and hence we can replace the pair $\big(\Delta(L, \varphi,L),(s,t)\big)$ by the pair $\big(\Delta(L, \gamma, L),(u^i,u^j)\big)$. But then again the conditions above implies that there exists $l_1, l_2\in L\langle u\rangle$ such that $u^i=\lexp{l_1}u$ and $u^j=\lexp{l_2}u$. Therefore we have
\begin{align*}
\lexp{(l_1^{-1}, l_2^{-1})}{\big(\Delta(L, \gamma, L),(u^i,u^j)\big)}=\big(\Delta(L, i_{l_1}^{-1}\gamma i_{l_2}, L),(u,u)\big)\,.
\end{align*}
This shows that $\widetilde{F^{L\langle u\rangle}_{L, u}} F^{L\langle u\rangle\times L\langle u\rangle}_{\Delta(L,\gamma, L),(u^i,u^j)}\widetilde{F^{L\langle u\rangle}_{L, u}}$ is non-zero if and only if the pair $\big(\Delta(L,\gamma, L),(u^i,u^j)\big)$ is conjugate to a pair of the form $\big(\Delta(L,\gamma', L),(u,u)\big)$, and in that case $\widetilde{F^{L\langle u\rangle}_{L, u}} F^{L\langle u\rangle\times L\langle u\rangle}_{\Delta(L,\gamma, L),(u^i,u^j)}\widetilde{F^{L\langle u\rangle}_{L, u}}$ is a scalar multiple of $F^{L\langle u\rangle\times L\langle u\rangle}_{\Delta(L,\gamma, L),(u^i,u^j)}$. This proves the first claim. The second claim follows now from Remark \ref{rem remarkonpairs}.
\end{proof}

\begin{corollary}\label{cor algebraisom}
The map 
\begin{align*}
\widetilde{F^{L\langle u\rangle}_{L, u}}\FFTD(L\langle u\rangle, L\langle u\rangle)\widetilde{F^{L\langle u\rangle}_{L, u}} &\to \FF \Out(L,u) \\
F^{L\langle u\rangle\times L\langle u\rangle}_{\Delta(L,\gamma, L),(u,u)} &\mapsto \frac{1}{|Z(L\langle u\rangle)|} \gamma
\end{align*}
is an algebra isomorphism.
\end{corollary}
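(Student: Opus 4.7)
My plan is to verify, in order, that the proposed map is a well-defined $\FF$-linear bijection, that it sends unit to unit, and that it is multiplicative; essentially all of the work has been done in the preceding lemmas, and it remains only to track the scalars.

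Write $z := |Z(L\langle u\rangle)|$ and $e_\gamma := F^{L\langle u\rangle\times L\langle u\rangle}_{\Delta(L,\gamma,L),(u,u)}$ for $\gamma \in \Aut_u(L)$. By Proposition \ref{prop algebragenerated} combined with Remark \ref{rem remarkonpairs}, the elements $e_\gamma$ form an $\FF$-basis of the source algebra when $\gamma$ runs over a transversal of $\Aut_u(L)/\Inn(C_{L\langle u\rangle}(u)) = \Out(L,u)$, and the basis element $e_\gamma$ depends on $\gamma$ only through its class in $\Out(L,u)$. Since the corresponding elements $\frac{1}{z}\gamma$ form an $\FF$-basis of $\FF\Out(L,u)$, the given assignment is a well-defined $\FF$-linear isomorphism.

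For the unit, Remark \ref{rem conjugatepairs} applied to $G = L\langle u\rangle$ with $P = L$ and $r = u$ (so that $C_G(P\langle r\rangle) = Z(L\langle u\rangle)$) yields $\widetilde{F^{L\langle u\rangle}_{L, u}} = z \cdot e_{\id}$; the left-hand side is the identity of the sandwich algebra, and its image under the map is $z \cdot \frac{1}{z}\id = \id$. For multiplicativity, Lemma \ref{lem tensorprodofidempotents} with $G = L\langle u\rangle$, $P = L$, $s = u$, $\phi = \gamma_1$, and $\gamma = \gamma_2$ gives
$$e_{\gamma_1} \otimes_{kL\langle u\rangle} e_{\gamma_2} = \frac{1}{z}\, e_{\gamma_1\gamma_2},$$
whose image $\frac{1}{z^2}\gamma_1\gamma_2$ equals the product $\frac{1}{z}\gamma_1 \cdot \frac{1}{z}\gamma_2$ in $\FF\Out(L,u)$. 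The only delicate point is pure bookkeeping: the normalization $\frac{1}{z}$ is chosen precisely so that the scalar $z$ in Remark \ref{rem conjugatepairs} and the scalar $\frac{1}{z}$ in Lemma \ref{lem tensorprodofidempotents} cancel correctly to reproduce the usual group-algebra multiplication.
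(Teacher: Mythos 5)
Your proof is correct and follows essentially the same route as the paper, whose proof of Corollary~\ref{cor algebraisom} consists precisely of invoking Lemma~\ref{lem tensorprodofidempotents} (for multiplicativity, with $G=L\langle u\rangle$, $P=L$, $s=u$) and Proposition~\ref{prop algebragenerated} (for the spanning/dimension count, via Remark~\ref{rem remarkonpairs}). Your additional bookkeeping — the identification $\widetilde{F^{L\langle u\rangle}_{L,u}}=|Z(L\langle u\rangle)|\,F^{L\langle u\rangle\times L\langle u\rangle}_{\Delta L,(u,u)}$ from Remark~\ref{rem conjugatepairs} for the unit, and the well-definedness on $\Out(L,u)$-classes — is exactly what the paper leaves implicit (only note that calling this algebra the ``source algebra'' is a terminological slip, since that term is reserved for $ikGi$).
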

\begin{proof}
This follows from Lemma \ref{lem tensorprodofidempotents} and Proposition \ref{prop algebragenerated}. 
\end{proof}

\begin{corollary}\label{Fppk semisimple}
The category $\Fppk{\FF}$ is semisimple. Moreover, the simple diagonal $p$-permutation functors, up to isomorphism, are parametrized by the isomorphism classes of triples $(L,u,V)$ where $(L,u)$ is a $\DD$-pair, and $V$ is a simple $\FF\Out(L,u)$-module. 
\end{corollary}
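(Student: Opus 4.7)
The plan is to read off both semisimplicity and the parametrization of simples directly from the equivalence of categories in Corollary~\ref{cor equivalenceofcats}, combined with the structural results about $\calP$ (equivalently $\calD^\Delta$) established in Theorem~\ref{thm keypoint} and Corollary~\ref{cor algebraisom}. By Corollary~\ref{cor equivalenceofcats}, the functor $X\mapsto \Hom_{\Fppk{\FF}}(-,X)$ induces an equivalence
\[
\Fppk{\FF} \simeq \Fun_\FF(\calP^{\mathrm{op}}, \lMod{\FF}) \simeq \Fun_\FF(\calD^\Delta, \lMod{\FF}),
\]
so it suffices to analyse the right-hand side.

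The key structural observation is that $\calD^\Delta$ is essentially a \emph{disjoint union of one-object categories}. Indeed, Theorem~\ref{thm keypoint} applied with $G=L\langle u\rangle$ and $K=M\langle v\rangle$, together with the fact that $(\tilde L,\tilde u)=(L,u)$ for any $\DD$-pair, shows that $\Hom_{\calD^\Delta}\bigl((L,u),(M,v)\bigr)=0$ whenever $(L,u)\not\cong (M,v)$. Moreover, by Corollary~\ref{cor algebraisom}, the endomorphism algebra of $(L,u)$ in $\calD^\Delta$ is isomorphic to $\FF\Out(L,u)$. Hence an $\FF$-linear functor $\calD^\Delta\to\lMod{\FF}$ amounts to the datum of a family, indexed by isomorphism classes of $\DD$-pairs, of modules over $\FF\Out(L,u)$ (using the canonical anti-isomorphism $\FF\Out(L,u)^{\mathrm{op}}\cong \FF\Out(L,u)$ via $g\mapsto g^{-1}$ to absorb the $\mathrm{op}$). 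This yields an equivalence
\[
\Fun_\FF(\calD^\Delta,\lMod{\FF}) \;\simeq\; \prod_{[(L,u)]} \lMod{\FF\Out(L,u)},
\]
the product running over isomorphism classes of $\DD$-pairs.

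To conclude, since each $\Out(L,u)$ is a finite group and $\FF$ has characteristic $0$, Maschke's theorem shows that each $\FF\Out(L,u)$ is a semisimple $\FF$-algebra. Therefore every module in each factor category decomposes as a direct sum of simple modules, and the product category is semisimple. The simple objects of the product are precisely those functors supported on a single iso class $[(L,u)]$ and equal there to a simple $\FF\Out(L,u)$-module $V$, which gives the required parametrization by triples $(L,u,V)$. There is no serious obstacle beyond the bookkeeping of the identifications $\calP^{\mathrm{op}}\cong\calD^\Delta$ and $\FF\Out(L,u)^{\mathrm{op}}\cong\FF\Out(L,u)$; the substantial content has already been supplied by Theorem~\ref{thm keypoint} (vanishing of Hom between non-isomorphic $\DD$-pairs) and Corollary~\ref{cor algebraisom} (identification of the endomorphism algebra).
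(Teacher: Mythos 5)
Your proposal is correct and follows essentially the same route as the paper: reduce via Corollary~\ref{cor equivalenceofcats} to $\Fun_{\FF}(\calD^{\Delta},\lMod{\FF})$, use Theorem~\ref{thm keypoint} to split $\calD^{\Delta}$ into one-object blocks indexed by isomorphism classes of $\DD$-pairs, and use Corollary~\ref{cor algebraisom} together with semisimplicity of $\FF\Out(L,u)$ in characteristic $0$ to conclude. Your extra remarks (that $(\tilde L,\tilde u)=(L,u)$ for a $\DD$-pair, and the handling of the $\mathrm{op}$ via $g\mapsto g^{-1}$) are just the bookkeeping the paper leaves implicit.
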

\begin{proof}
By Corollary \ref{cor equivalenceofcats}, the category $\Fppk{\FF}$ is equivalent to $\Fun_{\FF}(\calD^{\Delta}, \lMod{\FF})$. By Theorem \ref{thm keypoint} the category $\calD^{\Delta}$ is a product of the categories $\calD^{\Delta}(L,u)$ where $\calD^{\Delta}(L,u)$ is a category with one object, a $\DD$-pair $(L,u)$ up to isomorphism, and hom set $\widetilde{F^{L\langle u\rangle}_{L, u}}\FFTD(L\langle u\rangle, L\langle u\rangle)\widetilde{F^{L\langle u\rangle}_{L, u}}$. The result now follows from Corollary \ref{cor algebraisom}.
\end{proof}

\section{More on simple functors}\label{sec simplefunctors}

Let $(L,u)$ be a $\DD$-pair and let $V$ be a simple $\FF \Out(L,u)$-module. Let $E_{L,u}:=\widetilde{F^{L\langle u\rangle}_{L, u}}\FFTD(L\langle u\rangle, L\langle u\rangle)\widetilde{F^{L\langle u\rangle}_{L, u}}$. We can consider $V$ as an $E_{L,u}$-module via the isomorphism in Corollary \ref{cor algebraisom}. Let $e_V$ denote a primitive idempotent of $\FF \Out(L,u)$ such that $V$ is isomorphic to $\FF \Out(L,u)e_V$. Then the simple diagonal $p$-permutation functor $S_{L,u,V}$ that correspond to the triple $(L,u,V)$ is $S_{L,u,V}=\FFTD_{L,u}e_V$. More precisely, for any finite group $G$, we have
\begin{align*}
S_{L,u,V}(G)=\FFTD(G, L\langle u\rangle)e_V=\{X\otimes_{kL\langle u\rangle} e_V | X\in \FFTD(G, L\langle u\rangle)\}\,.
\end{align*}

Our aim is to give a more precise description for the evaluation $S_{L,u,V}(G)$. 

\begin{nothing}
Let $M:=\FFTD(G,L\langle u\rangle)\widetilde{F^{L\langle u\rangle}_{L, u}}$. Note that $M$ is an $(\FFTD(G,G), E_{L,u})$-bimodule. Hence via the isomorphism in Corollary \ref{cor algebraisom}, $M$ can be viewed as an $(\FFTD(G,G), \FF\Out(L,u))$-bimodule. Note that
\begin{align*}
\sum_{(P,s)\in [\mathcal{Q}^{\Delta}_{G,p}]} F^G_{P,s}=[k] \in \FFT(G)
\end{align*}
implies that the sum
\begin{align*}
\sum_{(P,s)\in [\mathcal{Q}^{\Delta}_{G,p}]} \widetilde{F^G_{P,s}}=[kG] \in \FFTD(G,G)
\end{align*}
is equal to the identity element of $\FFTD(G,G)$. This means that we have the decomposition 
\begin{align*}
M=\bigoplus_{(P,s)\in [\mathcal{Q}^{\Delta}_{G,p}]} \widetilde{F^G_{P,s}} M =\bigoplus_{(P,s)\in [\mathcal{Q}^{\Delta}_{G,p}]} \widetilde{F^G_{P,s}} \FFTD(G,L\langle u\rangle)\widetilde{F^{L\langle u\rangle}_{L, u}}
\end{align*}
as $\FF$-vector spaces. Note that Theorem \ref{thm keypoint} implies that $\widetilde{F^G_{P,s}} M=0$ unless $(\tilde{P},\tilde{s})\cong (L,u)$. Hence as $\FF$-vector spaces, we have
\begin{align*}
M=\sum_{\substack{(P,s)\in [\mathcal{Q}^{\Delta}_{G,p}]\\ (\tilde{P},\tilde{s})\cong (L,u)}}  \widetilde{F^G_{P,s}} \FFTD(G,L\langle u\rangle)\widetilde{F^{L\langle u\rangle}_{L, u}}\,.
\end{align*} 
\end{nothing} 

\begin{nothing}
{\rm (a)} Let $(P,s)$ be a pair of $G$ with the property that $(\tilde{P},\tilde{s})\cong (L,u)$. Using Corollary \ref{cor brauercharacteroftensor} one shows that if $\big(\Delta(R,\gamma,L),(t,u^{i})\big)$ is a pair of $G\times L\langle u\rangle$ with the property that
\begin{align*}
\widetilde{F^G_{P,s}} F^{G\times L\langle u\rangle}_{\Delta(R,\gamma,L),(t,u^{i})}\widetilde{F^{L\langle u\rangle}_{L, u}} \neq 0
\end{align*}
then the pair $\big(\Delta(R,\gamma,L),(t,u^{i})\big)$ is conjugate to a pair of the form $\big(\Delta(P,\phi, L),(s,u)\big)$. But then Lemma \ref{lem scalarmultiple} further implies that $\widetilde{F^G_{P,s}} \FFTD(G,L\langle u\rangle)\widetilde{F^{L\langle u\rangle}_{L, u}}$ is generated by the elements of the form $F^{G\times L\langle u\rangle}_{\Delta(P,\phi,L),(s,u)}$.

\smallskip
{\rm (b)} Fix an isomorphism $\phi_{P,s}:L\to P$ satisfying $\phi_{P,s}(\lexp{u}l)=\lexp{s}\phi_{P,s}(l)$ for all $l\in L$.  Note that the existence of such an isomorphism follows from Lemma \ref{lem pairsofdirectproduct}. For any $g\in N_G(P,s)$, the map $\phi_{P,s}^{-1}\circ i_g\circ \phi_{P,s}$ belongs to $\Aut_{u}(L)$. Therefore we have a group homomorphism
\begin{align}\label{eqn homfromnormaliser}
N_G(P,s)\to \Out(L,u)
\end{align}
that sends $g\in N_G(P,s)$ to the image of $\phi_{P,s}^{-1}\circ i_g\circ \phi_{P,s}$ in $ \Out(L,u)$. This allows us to define an $\FF N_G(P,s)$-module structure on any $\FF \Out(L,u)$-module. Let $N$ denote the image of $N_G(P,s)$ under this homomorphism and set
\begin{align*}
e_{P,s}:=\frac{1}{|N|}\sum_{n\in N}n\,.
\end{align*}
\end{nothing}

\begin{proposition}\label{prop isomofvectorspaces}
Let $G$ be a finite group and let $(L,u)$ be a $\DD$-pair. The map
\begin{align*}
\Psi: \widetilde{F^G_{P,s}} \FFTD(G,L\langle u\rangle)\widetilde{F^{L\langle u\rangle}_{L, u}}&\to e_{P,s} \FF \Out(L,u) \\
F^{G\times L\langle u\rangle}_{\Delta(P,\phi,L),(s,u)} &\mapsto e_{P,s} \overline{\phi_{P,s}^{-1}\phi}
\end{align*}
is an isomorphism of right $\FF\Out(L,u)$-modules where $\overline{\cdot}$ denotes the image in $\Out(L,u)$. 
\end{proposition}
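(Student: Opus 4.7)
The plan is to verify that $\Psi$ is well-defined on the spanning set described in part~(a), that it identifies explicit bases of the two sides, and that it is compatible with the right action of $\FF\Out(L,u)$. The key input is a conjugacy analysis of the pairs $\big(\Delta(P,\phi,L),(s,u)\big)$ appearing on the left-hand side.

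First I would parametrise the isomorphisms $\phi:L\to P$ compatible with $(s,u)$: every such $\phi$ is uniquely of the form $\phi=\phi_{P,s}\circ f$ with $f\in\Aut_u(L)$. A direct computation of the action of $(g,l)\in G\times L\langle u\rangle$ on $\Delta(P,\phi,L)$ and on $(s,u)$ shows that two such pairs $\big(\Delta(P,\phi_i,L),(s,u)\big)$, $i=1,2$, are $(G\times L\langle u\rangle)$-conjugate if and only if there exist $g\in N_G(P,s)$ and $l\in C_{L\langle u\rangle}(u)$ with $\phi_2=i_g\circ\phi_1\circ i_{l^{-1}}$. Writing $\phi_i=\phi_{P,s}\circ f_i$ and observing that $i_{l^{-1}}\in\Inn(C_{L\langle u\rangle}(u))$ when $l\in C_{L\langle u\rangle}(u)$, this translates via the homomorphism~(\ref{eqn homfromnormaliser}) into the condition $\overline{f_2}\in N\cdot\overline{f_1}$ in $\Out(L,u)$. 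Hence the $(G\times L\langle u\rangle)$-conjugacy classes of such pairs are in bijection with the set of left cosets $N\backslash\Out(L,u)$.

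By part~(a) together with Lemma~\ref{lem scalarmultiple}, the idempotents $F^{G\times L\langle u\rangle}_{\Delta(P,\phi,L),(s,u)}$ indexed by these conjugacy classes form an $\FF$-basis of $\widetilde{F^G_{P,s}}\FFTD(G,L\langle u\rangle)\widetilde{F^{L\langle u\rangle}_{L,u}}$, so this space has dimension $|N\backslash\Out(L,u)|$. A straightforward verification shows that the elements $e_{P,s}\bar{g}$, as $\bar{g}$ runs over a set of representatives of $N\backslash\Out(L,u)$, form a basis of $e_{P,s}\FF\Out(L,u)$ of the same dimension. Well-definedness of $\Psi$ on basis elements is immediate: if $\phi_1,\phi_2$ represent the same conjugacy class, then $\overline{\phi_{P,s}^{-1}\phi_2}=n\cdot\overline{\phi_{P,s}^{-1}\phi_1}$ for some $n\in N$, and $e_{P,s}n=e_{P,s}$ gives the desired equality. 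The map $\Psi$ thus sends one basis bijectively to the other and is an isomorphism of $\FF$-vector spaces.

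It remains to check right $\FF\Out(L,u)$-linearity. Under Corollary~\ref{cor algebraisom}, an element $\gamma\in\Out(L,u)$ corresponds to $|Z(L\langle u\rangle)|\cdot F^{L\langle u\rangle\times L\langle u\rangle}_{\Delta(L,\gamma,L),(u,u)}$ in $E_{L,u}$. Applying Lemma~\ref{lem tensorprodofidempotents} yields
\[
F^{G\times L\langle u\rangle}_{\Delta(P,\phi,L),(s,u)}\otimes_{kL\langle u\rangle}\bigl(|Z(L\langle u\rangle)|\cdot F^{L\langle u\rangle\times L\langle u\rangle}_{\Delta(L,\gamma,L),(u,u)}\bigr)=F^{G\times L\langle u\rangle}_{\Delta(P,\phi\gamma,L),(s,u)},
\]
whose $\Psi$-image is $e_{P,s}\overline{\phi_{P,s}^{-1}\phi\gamma}=(e_{P,s}\overline{\phi_{P,s}^{-1}\phi})\cdot\gamma$, i.e., $\Psi\bigl(F^{G\times L\langle u\rangle}_{\Delta(P,\phi,L),(s,u)}\bigr)\cdot\gamma$, as required. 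The main obstacle is the conjugacy bookkeeping in the first step; once carried out, all remaining verifications reduce immediately to Lemma~\ref{lem tensorprodofidempotents} and Corollary~\ref{cor algebraisom}.
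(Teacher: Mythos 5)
Your proposal is correct and follows essentially the same route as the paper: well-definedness via the same conjugacy analysis of pairs $\big(\Delta(P,\phi,L),(s,u)\big)$ (using $g\in N_G(P,s)$, $l\in C_{L\langle u\rangle}(u)$ and $e_{P,s}n=e_{P,s}$), bijectivity from the spanning statement together with Lemma~\ref{lem scalarmultiple}, and right $\FF\Out(L,u)$-linearity from Lemma~\ref{lem tensorprodofidempotents} and Corollary~\ref{cor algebraisom}. Your repackaging of injectivity and surjectivity as a bijection between bases indexed by the cosets $N\backslash\Out(L,u)$ is just a mild reorganization of the paper's argument, which instead checks injectivity directly by showing the group elements $n\,\overline{\phi_{P,s}^{-1}\phi}$ are pairwise distinct.
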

\begin{proof}
First we show that the map $\Psi$ is well-defined. Let $\left(\Delta(P,\phi,L),(s,u)\right)$ and $\left(\Delta(P,\sigma,L),(s,u)\right)$ be two conjugate pairs of $G\times L\langle u\rangle$. Then there exists $(g,l)\in G\times L\langle u\rangle$ such that
\begin{align*}
\left(\Delta(P,\phi,L),(s,u)\right)=\lexp{(g,l)}{\big(\Delta(P,\sigma,L),(s,u)\big)} \,.
\end{align*}
This means that $g\in N_G(P,s)$, $l\in C_{L\langle u\rangle}(u)$ and $\phi=i_g\sigma i_l^{-1}$. Therefore we have
\begin{align*}
\Psi(F^{G\times L\langle u\rangle}_{\Delta(P,\phi,L),(s,u)})&=e_{P,s} \overline{\phi_{P,s}^{-1}\phi}=e_{P,s} \overline{\phi_{P,s}^{-1}i_g\sigma i_l^{-1}}\\&
=\frac{1}{|N|}\sum_{n\in N} n \overline{\phi_{P,s}^{-1}i_g\sigma i_l^{-1}}\\&
=\frac{1}{|N|}\sum_{n\in N} n \overline{\phi_{P,s}^{-1}i_g\phi_{P,s}}\cdot \overline{\phi_{P,s}^{-1}\sigma i_l^{-1}}\\&
=\frac{1}{|N|}\sum_{n\in N} n \overline{\phi_{P,s}^{-1}\sigma}\\&
=\Psi(F^{G\times L\langle u\rangle}_{\Delta(P,\sigma,L),(s,u)})\,.
\end{align*}
This proves the well-definedness. The map $\Psi$ is clearly surjective. For the injectivity, assume that
\begin{align}\label{eqn imagezero}
\Psi\left(\sum_\phi \lambda_\phi F^{G\times L\langle u\rangle}_{\Delta(P,\phi,L),(s,u)} \right) = 0
\end{align}
where $\lambda_\phi \in \FF$ and where the sum runs over a set of isomorphisms $\phi: L\to P$ such that the idempotents $F^{G\times L\langle u\rangle}_{\Delta(P,\phi,L),(s,u)}$ are all distinct. This implies that
\begin{align}\label{eqn imagezero2}
\sum_\phi \lambda_\phi \left(\sum_{n\in N} n \overline{\phi_{P,s}^{-1} \phi} \right) = 0 \,.
\end{align}
Now if $\phi$ and $\sigma$ are isomorphisms $L\to P$ that appear in (\ref{eqn imagezero2}), and if $m,n\in N$ such that
\begin{align*}
n\overline{\phi_{P,s}^{-1}\phi}=m\overline{\phi_{P,s}^{-1}\sigma}\,,
\end{align*}
then there exists $g\in N_G(P,s)$ and $l\in C_{L\langle u\rangle}(u)$ such that
\begin{align*}
\phi_{P,s}^{-1}\circ i_g \circ \phi_{P,s}\circ \phi_{P,s}^{-1} \circ \phi=\phi_{P,s}^{-1}\circ \sigma\circ i_l
\end{align*}
which implies that $i_g\circ\phi\circ i_l^{-1}=\sigma$. Therefore the pairs $\left(\Delta(P,\phi,L),(s,u)\right)$ and $\left(\Delta(P,\sigma,L),(s,u)\right)$ are conjugate and hence $F^{G\times L\langle u\rangle}_{\Delta(P,\phi,L),(s,u)}=F^{G\times L\langle u\rangle}_{\Delta(P,\sigma,L),(s,u)}$. Since the idempotents in (\ref{eqn imagezero}) are chosen to be distinct, this implies that $\phi=\sigma$ and so $n=m$. This shows that the elements $n \overline{\phi_{P,s}^{-1} \phi} \in \Out(L,u)$ in (\ref{eqn imagezero2}) are distinct.  Therefore we have $\lambda_\phi =0$ for any $\phi$ in the sum.  This shows that the map $\Psi$ is injective and hence an isomorphism of $\FF$-vector spaces. The right $\Out(L,u)$-module structure on $\widetilde{F^G_{P,s}} \FFTD(G,L\langle u\rangle)\widetilde{F^{L\langle u\rangle}_{L, u}}$ is given via the algebra isomorphism in Corollary \ref{cor algebraisom} and hence Lemma \ref{lem tensorprodofidempotents} implies that the map $\Phi$ is also an $\FF\Out(L,u)$-module homomorphism. 
\end{proof}

We define an $\FF N_G(P,s)$-module structure on the simple $\FF\Out(L,u)$-module $V$ via the homomorphism in (\ref{eqn homfromnormaliser}). Proposition \ref{prop isomofvectorspaces} implies that the vector space $ \widetilde{F^G_{P,s}} \FFTD(G,L\langle u\rangle)\widetilde{F^{L\langle u\rangle}_{L, u}}e_V$ is isomorphic to the space of $N_G(P,s)$-fixed points $V^{N_G(P,s)}$ of $V$. We proved the following.

\begin{corollary}\label{evaluation simple}
For any finite group $G$, we have
\begin{align*}
S_{L,u,V}(G)\cong \bigoplus_{\substack{(P,s)\in[\mathcal{Q}^{\Delta}_{G,p}]\\ (\tilde{P},\tilde{s})\cong (L,u)}} V^{N_G(P,s)}\,.
\end{align*}
\end{corollary}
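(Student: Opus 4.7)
The plan is to read off $S_{L,u,V}(G)$ directly from the decomposition of $M:=\FFTD(G,L\langle u\rangle)\widetilde{F^{L\langle u\rangle}_{L,u}}$ set up just before the statement, and then to apply Proposition~\ref{prop isomofvectorspaces} summand by summand. Since $e_V\in E_{L,u}=\widetilde{F^{L\langle u\rangle}_{L,u}}\FFTD(L\langle u\rangle,L\langle u\rangle)\widetilde{F^{L\langle u\rangle}_{L,u}}$, one has $\widetilde{F^{L\langle u\rangle}_{L,u}}\,e_V=e_V$, and therefore $S_{L,u,V}(G)=\FFTD(G,L\langle u\rangle)\,e_V=M\,e_V$. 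Combined with the orthogonal idempotent decomposition $M=\bigoplus_{(P,s)}\widetilde{F^G_{P,s}}\,M$ and Theorem~\ref{thm keypoint}, this produces
\[
S_{L,u,V}(G)\;=\;\bigoplus_{\substack{(P,s)\in[\calQ^{\Delta}_{G,p}]\\ (\tilde P,\tilde s)\cong (L,u)}}\widetilde{F^G_{P,s}}\,M\,e_V.
\]

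For each pair $(P,s)$ appearing in this sum, Proposition~\ref{prop isomofvectorspaces} supplies an isomorphism of right $\FF\Out(L,u)$-modules $\widetilde{F^G_{P,s}}M\liso e_{P,s}\,\FF\Out(L,u)$. Right-multiplying by $e_V$ and using the standard identification $\FF\Out(L,u)\,e_V\cong V$ as a left $\FF\Out(L,u)$-module yields $\FF$-linear isomorphisms
\[
\widetilde{F^G_{P,s}}\,M\,e_V\;\cong\;e_{P,s}\,\FF\Out(L,u)\,e_V\;\cong\;e_{P,s}V,
\]
where $e_{P,s}$ now acts on $V$ through its $\FF\Out(L,u)$-module structure.

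It then remains to identify $e_{P,s}V$ with $V^{N_G(P,s)}$. By its very construction, $e_{P,s}=\frac{1}{|N|}\sum_{n\in N}n$ is the symmetrizing idempotent of the subgroup $N\le\Out(L,u)$ defined as the image of the group homomorphism $N_G(P,s)\to\Out(L,u)$ introduced just before Proposition~\ref{prop isomofvectorspaces}. Since the $\FF N_G(P,s)$-module structure on $V$ is pulled back along this homomorphism, the image $e_{P,s}V$ is exactly $V^{N}=V^{N_G(P,s)}$. Summing over the relevant $G$-conjugacy classes of pairs delivers the asserted isomorphism. The computation is essentially a bookkeeping assembly of results already in place; the only mildly delicate point is to verify that the isomorphism of Proposition~\ref{prop isomofvectorspaces} is compatible with right multiplication by $e_V$, which is automatic from the fact that it is right $\FF\Out(L,u)$-linear.
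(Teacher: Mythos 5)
Your proposal is correct and follows essentially the same route as the paper: the paper likewise writes $S_{L,u,V}(G)=M e_V$ with $M=\FFTD(G,L\langle u\rangle)\widetilde{F^{L\langle u\rangle}_{L,u}}$, decomposes $M=\bigoplus_{(P,s)}\widetilde{F^G_{P,s}}M$, discards the summands with $(\tilde P,\tilde s)\not\cong(L,u)$ by Theorem~\ref{thm keypoint}, and then uses Proposition~\ref{prop isomofvectorspaces} together with the averaging idempotent $e_{P,s}$ to identify $\widetilde{F^G_{P,s}}Me_V$ with $V^{N_G(P,s)}$. No differences of substance.
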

\begin{Remark} For $p$-groups $G$ and $H$, the $\FF$-vector space $\FFTD(G,H)$ is canonically isomorphic to $\FF B^\Delta(G,H)$, the $\FF$-linear extension of the Burnside group of bifree $(G,H)$-bisets. Moreover, through this isomorphism, the tensor product of bimodules becomes the composition of bisets. Hence diagonal $p$-permutation functors restricted to $p$-groups are precisely global Mackey functors restricted to $p$-groups (see \cite{Webb1993} for more information on global Mackey functors).\par
For a simple diagonal $p$-permutation functor $S_{L,1,V}$ and a $p$-group $G$, the isomorphism in Corollary~\ref{evaluation simple} becomes
$$S_{L,1,V}(G)\cong \bigoplus_{P\cong L}V^{N_G(P)}$$
where $P$ runs through the subgroups of $G$ up to conjugation, and we recover the formula for the evaluations of simple global Mackey functors (see \cite{Webb1993}, Theorem 2.6(ii)]).
\end{Remark}
\section{Blocks as functors}\label{sec Blocksasfunctors}

In this section, $G$ denotes a finite group and $b$ a block idempotent of $kG$. For an arbitrary commutative ring of coefficients $R$, we define the {\em the block diagonal $p$-permutation functor} $\RTD_{G,b}$ as
\begin{align*}
\RTD_{G,b}: Rpp_k^{\Delta}&\to \lMod{R}\\
H&\mapsto RT^\Delta(H,G)\otimes_{kG} kGb\,.
\end{align*}
Our aim in this section is to describe the functor $\FFTD_{G,b}$ in terms of the simple functors $S_{L,u,V}$. We first make a remark for the case of an arbitrary ring $R$.
\begin{Remark} 
Let $D$ be a defect group of $b$ and let $i\in (kGb)^D$ be a source idempotent of $b$. The source algebra $ikGi$ of $b$ is an interior $D$-algebra and for any finite group $H$ we denote by $\RTD(H, ikGi)$ the Grothendieck group of $(kH, ikGi)$-bimodules whose restriction to $H\times D$ lies in $\RTD(H,D)$. 

By \cite{Puig1981} the map sending a $kGb$-module $M$ to the $ikGi$-module $iM$ induces a Morita equivalence between $\lmod{kGb}$ and $\lmod{ikGi}$.  Hence we have a Morita equivalence between $\lmod{kH}_{kGb}$ and $\lmod{kH}_{ikGi}$ given by a $p$-permutation bimodule in $\RTD(kH, kG)$.  It follows that the functor $\RTD_{G,b}$ is isomorphic to the diagonal $p$-permutation functor $\RTD(-, ikGi)$.  This means in particular that the functor $\RTD_{G,b}$ depends only on the source algebra of $b$. By \cite{Puig1986}, the source algebra of $b$ determines the local points on $kGb$ and one of our aims in the rest of this section is to give a description of the multiplicities of the simple functors in $\FFTD_{G,b}$ in terms of the local points on $kGb$ (see Theorem~\ref{thm multiplicityformulas}).
\end{Remark}
\begin{nothing}\label{noth multiplicityofsimple}
Let $(L,u)$ be a $\DD$-pair and $V$ a simple $\FF\Out(L,u)$-module. We set
\begin{align*}
\mathrm{Mult}(G,b,L,u,V):=kGb\otimes_{kG}\FFTD(G, L\langle u\rangle)\otimes_{kL\langle u\rangle} \widetilde{F^{L\langle u\rangle}_{L,u}}e_V\,,
\end{align*}
where $e_V$ is an idempotent of $\FF\Out(L,u)$ such that $V\cong \FF\Out(L,u)e_V$. 

By the Yoneda lemma we have
\begin{align*}
\Hom_{\mathcal{F}^{\Delta}_{pp_k}}(\FFTD_G, S_{L,u,V})\cong S_{L,u,V}(G)\,.
\end{align*}
Therefore Schur's lemma implies that the multiplicity of the simple functor $S_{L,u,V}$ in the representable functor $\FFTD_G$ is equal to
\begin{align*}
\dim_{\FF}\left(S_{L,u,V}(G)\right)=\dim_{\FF}\left(\FFTD(G, L\langle u\rangle)\otimes_{kL\langle u\rangle} \widetilde{F^{L\langle u\rangle}_{L,u}}e_V\right)\,.
\end{align*} 
This implies that the multiplicity of  $S_{L,u,V}$ in the functor $\FFTD_{G,b}$ is equal to
 \begin{align*}
 \dim_{\FF}\left(\mathrm{Mult}(G,b,L,u,V)\right)\,.
 \end{align*}
 Our aim in this section is to give a description of $\mathrm{Mult}(G,b,L,u,V)$. First, we give a description of $kGb\otimes_{kG}\FFTD(G, L\langle u\rangle)\otimes_{kL\langle u\rangle} \widetilde{F^{L\langle u\rangle}_{L,u}}$. 
\end{nothing}

\begin{nothing} 
Let $M$ be a $(kG, kL\langle u\rangle)$-bimodule and let $\Delta(P, \pi,L)$ be a twisted diagonal subgroup of $G\times L\langle u\rangle$. The Brauer quotient of $kGb\otimes_{kG}M\cong bM$ at $\Delta(P,\pi,L)$ is isomorphic to $\Br_P(b)M\left[\Delta(P,\pi,L)\right]$. By \cite{Broue1985}, this implies, in particular, that any element of $b\FFTD(G, L\langle u\rangle) \widetilde{F^{L\langle u\rangle}_{L,u}}$ is a linear combination of the elements of the form 
 \begin{align*}
m_{P,\pi,E}:= M\left(\Delta(P,\pi,L), E\right)\widetilde{F^{L\langle u\rangle}_{L,u}}\,,
 \end{align*}
 where $\pi: L\to P\le G$ is a group isomorphism, $E$ is a projective indecomposable $kN_{G\times L\langle u\rangle}\left(\Delta(P,\pi,L)\right)/\Delta(P,\pi,L)$-module, and $ M\left(\Delta(P,\pi,L), E\right)$ is the unique, up to isomorphism, indecomposable $p$-permutation $(kG, kL\langle u\rangle)$-bimodule whose Brauer quotient at $\Delta(P,\pi,L)$ is isomorphic to $E$. \\
\end{nothing}

\begin{nothing}\label{noth introductionofcalP}
{\rm (a)} Let $\calP(G,L,u)$ denote the set of pairs $(P, \pi)$ where $P\le G$ is a $p$-subgroup and $\pi:L\to P$ is a group isomorphism for which there exists $s\in G$ such that $\left(\Delta(P,\pi,L), (s,u)\right)$ is a pair of $G\times L\langle u\rangle$. The group $G\times L\langle u\rangle$ acts on $\calP(G,L,u)$ via
\begin{align*}
(g,t)\cdot (P,\pi)=(\lexp{g}P, i_g\circ \pi\circ i_t^{-1})
\end{align*}
for $g\in G$, $t\in L\langle u\rangle$ and $(P,\pi)\in\calP(G,L,u)$. Two elements $(P,\pi)$ and $(Q,\rho)$ of $\calP(G,L,u)$ lie in the same $G\times L\langle u\rangle$-orbit if and only if the subgroups $\Delta(P,\pi,L)$ and $\Delta(Q,\rho,L)$ of $G\times L\langle u\rangle$ are conjugate.  

\smallskip
{\rm (b)} The group $\Aut(L,u)$ also acts on $\calP(G,L,u)$ via
\begin{align*}
(P,\pi)\cdot \gamma = (P,\pi\gamma)
\end{align*}
for $(P,\pi)\in\calP(G,L,u)$ and $\gamma\in\Aut(L,u)$. If also $g\in G$ and $t\in L\langle u\rangle$, we have
\begin{align*}
\Big(\left(g,t\right)\cdot (P,\pi)\Big)\cdot \gamma = (g,\gamma^{-1}(t))\cdot\Big((P,\pi)\cdot \gamma\Big)
\end{align*}

\smallskip
{\rm (c)} Let $[(G\times L\langle u\rangle)\backslash \calP(G,L,u)]$ denote a set of representatives of $G\times L\langle u\rangle$-orbits of $\calP(G,L,u)$. The group $\Out(L,u)$ acts on $[(G\times L\langle u\rangle)\backslash \calP(G,L,u)]$ via
\begin{align*}
[(P,\pi)]\cdot \overline{\gamma} = [(P, \pi\gamma)]
\end{align*}
for $[(P,\pi)] \in [(G\times L\langle u\rangle)\backslash \calP(G,L,u)]$ and $\overline{\gamma}\in \Out(L,u)$, where $\gamma\in \Aut(L,u)$ is an automorphism with image $\overline{\gamma}$ in $\Out(L,u)$. Note that the class $[(P, \pi\gamma)]$ does not depend on the choice of $\gamma$. 

\smallskip
{\rm (d)} Let $\pi:L\to P$ be a group isomorphism and let $s\in G$ be an element with $\pi(\lexp{u}l)=\lexp{s}\pi(l)$ for all $l\in L$.  The $p$-part of the order of $s$ is coprime to the order of $u$. Hence there are integers $a$ and $b$ with
\begin{align*}
a\cdot |u| + b\cdot |s|_p=1\,.
\end{align*}
Now setting $s':= s^{b|s|_p}$ and $s'':=s^{a|u|}$ we get that
\begin{align*}
i_s\circ \pi= i_{s'}\circ i_{s''}\circ \pi =i_{s'}\circ \pi \circ i_{u^{a|u|}}=i_{s'}\circ\pi\,,
\end{align*}
since $u^{a|u|}=1$.  Hence the $p'$-element $s'$ satisfies $i_{s'}\circ \pi=\pi\circ i_u$ which implies that $\pi\in\calP(G,L,u)$. 
\end{nothing}

\begin{nothing}
Let $(P,\pi)\in\calP(G,L,u)$ be a pair and let $\gamma\in \Aut(L,u)$. 

\smallskip
(a) Set $\overline{N}_{P,\pi}:=N_{G\times L\langle u\rangle}\left(\Delta(P,\pi,L)\right)/\Delta(P,\pi,L)$. Since $(s,u)\in N_{G\times L\langle u\rangle}\left(\Delta\left(P,\pi,L\right)\right)$ for some $s\in G$, the group homomorphism
\begin{align*}
\Phi: \overline{N}_{P,\pi}&\to \langle u\rangle \\
\overline{(a,lu^i)}&\mapsto u^i
\end{align*}
is surjective. Also the map
\begin{align*}
\iota: C_G(P)&\to \overline{N}_{P,\pi} \\
x&\mapsto \overline{(x,1)}
\end{align*}
is an injective group homomorphism. One also shows that the kernel of $\Phi$ is equal to the image of~$\iota$. Therefore we have a short exact sequence
\begin{align*}
1\to C_G(P)\to \overline{N}_{P,\pi}\to \langle u\rangle \to 1
\end{align*}
of groups.

\smallskip
(b) Similarly, let $\overline{N}_\gamma=N_{L\langle u\rangle\times L\langle u\rangle}\left(\Delta(L,\gamma,L)\right)/(\Delta(L,\gamma,L)$. One shows that the map
\begin{align*}
\Phi: \overline{N}_\gamma &\to \langle u\rangle \\
\overline{(a,lu^i)}&\mapsto u^i
\end{align*}
is well-defined and surjective. Also the map
\begin{align*}
\iota: Z(L)&\to \overline{N}_\gamma \\
x&\mapsto \overline{(x,1)}
\end{align*}
is an injective group homomorphism. One also shows that the kernel of $\Phi$ is equal to the image of~$\iota$. Therefore we have a short exact sequence
\begin{align*}
1\to Z(L)\to \overline{N}_\gamma \to \langle u\rangle \to 1
\end{align*}
of groups which is split since $\langle u\rangle$ is a $p'$-group and $Z(L)$ is a $p$-group. Therefore we have $\overline{N}_\gamma\cong Z(L)\langle u\rangle$. 

\smallskip
{\rm(c)} We have a group isomorphism 
\begin{align*}
N_{G\times L\langle u\rangle}\left(\Delta(P,\pi\gamma,L)\right) &\to N_{G\times L\langle u\rangle}\left(\Delta(P,\pi,L)\right)\\
(s,t) &\mapsto (s,\gamma(t))
\end{align*}
which maps $\Delta(P,\pi\gamma,L)$ to $\Delta(P,\pi,L)$. Hence we have a group isomorphism
\begin{align} \label{eqn isomofnormalizers}
\overline{N}_{\pi\gamma} \to \overline{N}_\pi\,\,,  \quad \overline{(s,t)} &\mapsto \overline{(s,\gamma(t))} \,.
\end{align}
\end{nothing}

\begin{lemma}\label{lem isomofmodules}
Let $(P,\pi)\in\calP(G,L,u)$ and let $\gamma\in\Aut(L,u)$. Let also $V$ be a  $k\overline{N}_{P,\pi}$-module and let $W$ be a $k\langle u\rangle$-module. Consider the $k\overline{N}_{P,\pi\gamma}$-module $V\otimes_{kZ(L)}\left(\Ind^{\overline{N}_\gamma}_{\langle u\rangle} W \right)$ with the action 
\begin{align*}
\overline{(s,t)}\cdot (v\otimes w):=\overline{(s,\gamma(t))}v\otimes \overline{(\gamma(t),t)} w
\end{align*}
for $\overline{(s,t)}\in \overline{N}_{P,\pi\gamma}$, $v\in V$ and $w\in \Ind^{\overline{N}\gamma}_{\langle u\rangle} W$. Consider also  the $k\overline{N}_{P,\pi\gamma}$-module $V\otimes_{k}W$ with the action
\begin{align*}
\overline{(s,t)}\cdot (v\otimes w):=\overline{(s,\gamma(t))}v\otimes u^i w
\end{align*}
for $\overline{(s,t)}\in \overline{N}_{P,\pi\gamma}$, $v\otimes w\in V\otimes_k W$ where $t=lu^i$. Then the map
\begin{align*}
\Phi: V\otimes_{kZ(L)} \left(k\overline{N}_\gamma\otimes_{k\langle u\rangle}W\right)&\to V\otimes_k W\\
v\otimes (lu^i\otimes w)&\mapsto v\gamma(l)\otimes u^iw
\end{align*}
is an isomorphism of $k\overline{N}_{P,\pi\gamma}$-modules, where we use the isomorphism $\overline{N}_\gamma \cong Z(L)\langle u\rangle$. 
\end{lemma}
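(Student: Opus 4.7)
The plan is to prove the lemma in three steps: verify that $\Phi$ descends to a well-defined linear map on the balanced tensor products, check that it is $k\overline{N}_{P,\pi\gamma}$-equivariant, and exhibit an explicit two-sided inverse. Each step is a direct computation, so the real challenge is bookkeeping: one must carefully track the right $kZ(L)$-action on $V$ inherited from its $k\overline{N}_{P,\pi}$-structure via the embedding $\iota$, the left $kZ(L)$-action on $\Ind_{\langle u\rangle}^{\overline{N}_\gamma}W$ coming from the splitting $\overline{N}_\gamma\cong Z(L)\langle u\rangle$, and the two $k\overline{N}_{P,\pi\gamma}$-actions defined in the statement.

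For well-definedness, the $k\langle u\rangle$-balance inside the induced module is immediate: substituting $lu^{i+j}\otimes w = lu^i\otimes u^jw$ into $\Phi$ yields $v\gamma(l)\otimes u^{i+j}w$ either way. The $kZ(L)$-balance is the delicate part: one computes $\Phi(v\cdot z\otimes(lu^i\otimes w))$ and $\Phi(v\otimes(zlu^i\otimes w))$ for $z\in Z(L)$, and verifies equality using the fact that $\gamma$ restricts to an automorphism of $Z(L)$ (since $\gamma\in\Aut(L,u)$ preserves $L$ and hence its center) and that the embedding $Z(L)\to\overline{N}_{P,\pi}$ is compatible with its pullback along the isomorphism (\ref{eqn isomofnormalizers}).

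For $k\overline{N}_{P,\pi\gamma}$-linearity, take a general element $\overline{(s,t)}\in\overline{N}_{P,\pi\gamma}$, write $t=l'u^j$ with $l'\in L$, and expand
$$\overline{(s,t)}\cdot\bigl(v\otimes(lu^i\otimes w)\bigr) = \overline{(s,\gamma(t))}v \otimes\overline{(\gamma(t),t)}(lu^i\otimes w).$$
Expanding the second factor using the product in $\overline{N}_\gamma$ under the identification $\overline{N}_\gamma\cong Z(L)\langle u\rangle$ and then applying $\Phi$ produces an element of the form $\overline{(s,\gamma(t))}v\otimes u^{i+j}w$, which matches $\overline{(s,t)}\cdot\Phi(v\otimes(lu^i\otimes w))$ computed directly from the action formula on $V\otimes_kW$.

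For bijectivity, define the candidate inverse $\Psi\colon V\otimes_kW \to V\otimes_{kZ(L)}(k\overline{N}_\gamma\otimes_{k\langle u\rangle}W)$ by $v\otimes w\mapsto v\otimes(1\otimes w)$. The composite $\Phi\circ\Psi$ is plainly the identity of $V\otimes_kW$. For $\Psi\circ\Phi$, one starts with $v\otimes(lu^i\otimes w)$, applies $\Phi$ to obtain $v\gamma(l)\otimes u^iw$, then $\Psi$ to obtain $v\gamma(l)\otimes(1\otimes u^iw)$; moving $\gamma(l)\in Z(L)$ across the $kZ(L)$-balance and recombining inside the induced module via the $k\langle u\rangle$-balance recovers $v\otimes(lu^i\otimes w)$. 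The main obstacle throughout will be reconciling the two presentations of $\overline{N}_\gamma$ (as the quotient $N_{L\langle u\rangle\times L\langle u\rangle}(\Delta(L,\gamma,L))/\Delta(L,\gamma,L)$ and as the semidirect product $Z(L)\langle u\rangle$), together with ensuring that the $\gamma$-twist appearing in $\Phi$ is consistent with the $\gamma$-twists built into the $k\overline{N}_{P,\pi\gamma}$-actions on both sides.
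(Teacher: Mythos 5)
Your route is sound, and on the central step it coincides with the paper's: the equivariance computation you outline is exactly the one carried out there (write $t=l'u^j$, note that $\overline{(\gamma(t),t)}$ has image $\overline{(\gamma(u^j),u^j)}$ in $\overline{N}_\gamma$, and absorb the central element created when $u^j$ is pushed past $l$). Where you genuinely differ is at the two ends: the paper declares well-definedness and surjectivity to be clear and deduces bijectivity from equality of $k$-dimensions (both sides have dimension $\dim_k V\cdot\dim_k W$, since $k\overline{N}_\gamma\otimes_{k\langle u\rangle}W$ is a free $kZ(L)$-module of rank $\dim_k W$), whereas you spell out the balance relations and exhibit the explicit inverse $v\otimes w\mapsto v\otimes(1\otimes w)$. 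Your finish is valid and arguably more informative than the dimension count, and it costs nothing extra once well-definedness is in place.

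One concrete warning about the bookkeeping you yourself flag as delicate. In $V\otimes_{kZ(L)}\bigl(k\overline{N}_\gamma\otimes_{k\langle u\rangle}W\bigr)$ the right $Z(L)$-action on $V$ is $v\cdot z=\overline{(1,z^{-1})}v$ and the left action on the induced module goes through $\iota(z)=\overline{(z,1)}$. Since $(\gamma(x),x)\in\Delta(L,\gamma,L)$, one has $\overline{(z,1)}=\overline{(1,\gamma^{-1}(z)^{-1})}$; consequently, under the identification $\overline{N}_\gamma\cong Z(L)\langle u\rangle$ that makes the formula for $\Phi$ (and the relation $u^jl=l_0u^j$ with the natural action of $u$ on $Z(L)$) come out right, the element $\overline{(z,1)}$ corresponds to $\gamma^{-1}(z)$, not to $z$. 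If you compare, as literally written in your sketch, $\Phi\bigl(v\cdot z\otimes(lu^i\otimes w)\bigr)$ with $\Phi\bigl(v\otimes(zlu^i\otimes w)\bigr)$, you obtain $v\cdot\bigl(z\gamma(l)\bigr)\otimes u^iw$ versus $v\cdot\bigl(\gamma(z)\gamma(l)\bigr)\otimes u^iw$, which do not agree in general; the correct comparison is with $\Phi\bigl(v\otimes(\gamma^{-1}(z)\,lu^i\otimes w)\bigr)$, and then the balance holds. The same twist is what closes your $\Psi\circ\Phi$ computation: moving $\gamma(l)$ across the $kZ(L)$-balance produces left multiplication by $\overline{(\gamma(l),1)}$, which in the $Z(L)\langle u\rangle$-coordinates is the element denoted $l$, so you do recover $v\otimes(lu^i\otimes w)$. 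With this identification tracked consistently (the cited compatibility with the isomorphism $\overline{N}_{\pi\gamma}\cong\overline{N}_\pi$ is not the relevant one; the twist lives inside $\overline{N}_\gamma$), your proof goes through.
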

\begin{proof}
Clearly, the map $\Phi$ is well-defined and surjective. Let $v\otimes (lu^i\otimes w)\in V\otimes_{kZ(L)} \left(k\overline{N}\otimes_{k\langle u\rangle}W\right)$ and $\overline{(s,t)}\in \overline{N}_{P,\pi\gamma}$. Write $t=l'u^j$ and note that the image of $(\gamma(t),t)$ in $\overline{N}_\gamma$ is $\overline{(\gamma(u^j),u^j)}$. Let also $l_0\in Z(L)$ be the element with the property that $u^jl=l_0 u^j$.  We have
\begin{align*}
\Phi\left(\overline{(s,t)}\cdot\left(v\otimes (lu^i\otimes w)\right)\right)&=\Phi\left(\overline{(s,\gamma(t))}v\otimes\left(\overline{(\gamma(t),t)}\cdot (lu^i\otimes w)\right) \right)\\&
=\Phi\left(\overline{(s,\gamma(t))}v\otimes\left(l_0u^{i+j}\otimes w\right) \right)\\&
=(\overline{(s,\gamma(t))}v)\gamma(l_0)\otimes u^{i+j} w\\&
=\overline{(1,\gamma(l_0^{-1})})\overline{(s,\gamma(t))}v\otimes u^{i+j}w\\&
=\overline{(s,\gamma(l_0^{-1}t))}v\otimes u^{i+j}w\\&
=\overline{(s,\gamma(tl^{-1}))}v\otimes u^{i+j}w\\&
=\overline{(s,\gamma(t))} \overline{(1,\gamma(l^{-1}))} v \otimes u^{i}w\\&
=\overline{(s,\gamma(t))} v\gamma(l)\otimes u^{i}w\\&
=\overline{(s,t)}\cdot \Phi\left(v\otimes (lu^i\otimes w)\right)\,.
\end{align*} 
This shows that $\Phi$ is a $k\overline{N}_{P,\pi\gamma}$-module homomorphism. Since both sides have the same $k$-dimension, it follows that $\Phi$ is an isomorphism.
\end{proof}

\begin{lemma}\label{lem brauerconstructionofprimitiveidemp}
Let $(L,u)$ be a $\DD$-pair and let $\gamma\in \Aut(L,u)$. We have
\begin{align*}
F^{L\langle u\rangle \times L\langle u\rangle}_{\Delta(L,\gamma,L), (u,u)} [\Delta(L,\gamma,L)] = \frac{1}{| Z(L\langle u\rangle)|} \Ind_{\langle u\rangle}^{\overline{N}_\gamma} \left( F^{\langle u\rangle}_{1,u}\right)
\end{align*}
in $\FFT(\overline{N}_\gamma)$. 
\end{lemma}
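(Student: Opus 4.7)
The plan is to show that both sides of the claimed equality are explicit scalar multiples of the same primitive idempotent $F^{\overline{N}_\gamma}_{1,\bar u}$ of $\FFT(\overline{N}_\gamma)$, where $\bar u:=\overline{(u,u)}\in \overline{N}_\gamma$ is the image of $u$ under the section $\langle u\rangle\hookrightarrow \overline{N}_\gamma$ furnished by the split short exact sequence $1\to Z(L)\to \overline{N}_\gamma\to \langle u\rangle\to 1$ isolated just before the statement. That splitting also identifies $\overline{N}_\gamma$ with $Z(L)\rtimes \langle u\rangle$, with $u$ acting on $Z(L)$ by conjugation; this description will be needed for the centralizer computations.

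First I would pin down the left-hand side using the standard compatibility of the Brauer quotient with Brauer character values: for every $p$-subgroup $A\supseteq R:=\Delta(L,\gamma,L)$ of $S:=L\langle u\rangle\times L\langle u\rangle$ and every $p'$-lift $b\in N_S(A)\cap N_S(R)$ of a $p'$-element of $N_{\overline{N}_\gamma}(A/R)$, the iterated Brauer-quotient identity $X[A]\cong (X[R])[A/R]$ yields $\tau^{\overline{N}_\gamma}_{A/R,\,bR}(X[R])=\tau^S_{A,b}(X)$. Applied to $X=F^S_{R,(u,u)}$, this value is $1$ exactly when $(A,b)=_S(R,(u,u))$. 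Since conjugation preserves cardinality, $A\supseteq R$ forces $A=R$ and then $b$ must be $N_S(R)$-conjugate to $(u,u)$. Consequently $F^S_{R,(u,u)}[R]$ has the same $\tau$-values as $F^{\overline{N}_\gamma}_{1,\bar u}$ and hence coincides with it.

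Next I would handle the right-hand side with the induction formula for primitive idempotents of \cite[Proposition~3.2]{BoucThevenaz2010} (already exploited in Remark~\ref{rem conjugatepairs}), namely
$$\Ind^G_H F^H_{P,r}=[C_G(P\langle r\rangle):C_H(P\langle r\rangle)]\,F^G_{P,r}$$
for $H\le G$ and a pair $(P,r)$ of $H$. Specialising to $G=\overline{N}_\gamma$, $H=\langle u\rangle$, and $(P,r)=(1,u)$, together with the identification $C_{Z(L)\rtimes \langle u\rangle}(\bar u)=C_{Z(L)}(u)\cdot \langle u\rangle$ coming from the semidirect product description, gives $\Ind^{\overline{N}_\gamma}_{\langle u\rangle}F^{\langle u\rangle}_{1,u}=|C_{Z(L)}(u)|\,F^{\overline{N}_\gamma}_{1,\bar u}$.

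The final (and most delicate) step will be the scalar matching, that is, the arithmetic identity $|Z(L\langle u\rangle)|=|C_{Z(L)}(u)|$, which is precisely where the $\DD$-pair hypothesis intervenes. The inclusion $C_{Z(L)}(u)\subseteq Z(L\langle u\rangle)$ is automatic, and since $Z(L\langle u\rangle)=C_{L\langle u\rangle}(L)\cap C_L(u)$ it is enough to prove $C_{L\langle u\rangle}(L)=Z(L)$. Given $x=lu^i\in C_{L\langle u\rangle}(L)$, the relation $xl=lx$ forces $l$ and $u^i$ to commute, so the $p$- and $p'$-parts of $x$ are $l$ and $u^i$ respectively, and being powers of $x$ they both lie in $C_{L\langle u\rangle}(L)$; then $u^i\in \langle u\rangle\cap C_{L\langle u\rangle}(L)=C_{\langle u\rangle}(L)=1$ by the $\DD$-pair hypothesis, forcing $u^i=1$ and $x\in Z(L)$. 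Chaining the three ingredients delivers the asserted formula. The one point that requires real care is this last scalar coincidence: without the $\DD$-pair assumption one would generally have $|Z(L\langle u\rangle)|>|C_{Z(L)}(u)|$ and the identity in the lemma would fail.
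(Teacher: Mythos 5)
Your proof is correct, but it takes a genuinely different route from the paper's. The paper attacks the left-hand side directly: it expands $F^{S}_{\Delta(L,\gamma,L),(u,u)}$ (with $S=L\langle u\rangle\times L\langle u\rangle$) via Ducellier's explicit primitive-idempotent formula as a combination of modules induced from $\langle \Delta(L,\gamma,L)(u,u)\rangle$, discards all terms indexed by proper subgroups of $\Delta(L,\gamma,L)$ by a vertex argument, applies the classical formula for the Brauer construction of an induced module, and resums over the characters $\lambda$ of $\langle u\rangle$ to recognize $\frac{1}{|Z(L\langle u\rangle)|}\Ind_{\langle u\rangle}^{\overline{N}_\gamma}(F^{\langle u\rangle}_{1,u})$, the scalar coming from the coefficient $|C_{\Delta(L,\gamma,L)}(u,u)|/|C_{N_\gamma}(u,u)|$ in that formula. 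You instead compare species: transitivity of the Brauer construction identifies the left-hand side as the primitive idempotent $F^{\overline{N}_\gamma}_{1,\bar{u}}$ itself (a clean reformulation the paper never makes explicit), the induction formula for idempotents evaluates the right-hand side as $\frac{|C_{Z(L)}(u)|}{|Z(L\langle u\rangle)|}F^{\overline{N}_\gamma}_{1,\bar{u}}$, and the $\DD$-pair hypothesis enters exactly once, through $Z(L\langle u\rangle)=C_{Z(L)}(u)$; this makes the role of the hypothesis more transparent than in the paper, at the price of not producing the induced-module form directly, which is the shape actually consumed in Lemma~\ref{lem Brauerconstructionisom}. One caveat: the general induction formula you quote, $\Ind^G_H F^H_{P,r}=[C_G(P\langle r\rangle):C_H(P\langle r\rangle)]\,F^G_{P,r}$, is not correct as stated; the coefficient is $[N_G(P)\cap C_G(r):N_H(P)\cap C_H(r)]$ (for instance, with $p=3$, $G=S_3$, $H=P=C_3$, $r=1$ the coefficient is $2$, not $1$). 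Since you apply it only with $P=1$, where the two indices coincide and both equal $|C_{Z(L)}(u)|$ in your situation, the computation and the final scalar matching are unaffected, so the argument stands.
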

\begin{proof}
Let $X\le \Delta(L,\gamma,L)$ be a subgroup with the property that $X^{(u,u)}=X$. Let also $\lambda:\langle u\rangle\to k^\times$ be a group homomorphism.	The indecomposable direct summands of 
\begin{align*}
\Ind^{L\langle u\rangle \times L\langle u\rangle}_{\langle X(u,u)\rangle}\left( k^{\langle \Delta(L,\gamma,L)(u,u)\rangle}_{\langle X(u,u)\rangle, \lambda}\right)
\end{align*}
have vertices contained in $\langle X(u,u)\rangle \cap \lexp{s}\Delta(L,\gamma,L)$ for some $s$. Therefore the Brauer construction of such an induced module at $\Delta(L,\gamma,L)$ is zero if $X$ is strictly contained in $\Delta(L,\gamma,L)$. It follows by the primitive idempotent formula (see \cite[Proposition~2.7.8]{Ducellier2015}) that
\begin{align*}
&F^{L\langle u\rangle \times L\langle u\rangle}_{\Delta(L,\gamma,L), (u,u)} [\Delta(L,\gamma,L)]=\\&
\frac{|C_{\Delta(L,\gamma,L)}(u,u)|}{|C_{N_\gamma}(u,u)|}\sum_{\lambda:\langle u\rangle\to k^\times} \tilde{\lambda}(u^{-1})\left(\Ind^{L\langle u\rangle \times L\langle u\rangle}_{\langle \Delta(L,\gamma,L)(u,u)\rangle}\left(\Inf^{\langle \Delta(L,\gamma,L)(u,u)\rangle}_{\langle u\rangle} k_\lambda \right)\right)[\Delta(L,\gamma,L)]\,,
\end{align*}
where $\tilde{\lambda}$ is the Brauer character of $k_\lambda$. The classical formula for the Brauer construction of an induced module implies that
\begin{align*}
\left(\Ind^{L\langle u\rangle \times L\langle u\rangle}_{\langle \Delta(L,\gamma,L)(u,u)\rangle}\left(\Inf^{\langle \Delta(L,\gamma,L)(u,u)\rangle}_{\langle u\rangle} k_\lambda \right)\right)[\Delta(L,\gamma,L)]= \Ind^{N_\gamma}_{\langle \Delta(L,\gamma,L)(u,u)\rangle}\left(\Inf^{\langle \Delta(L,\gamma,L)(u,u)\rangle}_{\langle u\rangle} k_\lambda\right) 
\end{align*}
as $kN_\gamma$-modules. Therefore,
\begin{align*}
F^{L\langle u\rangle \times L\langle u\rangle}_{\Delta(L,\gamma,L), (u,u)} [\Delta(L,\gamma,L)]&=\frac{1}{|\langle u\rangle||Z(L\langle u\rangle)|}\sum_{\lambda:\langle u\rangle\to k^\times} \tilde{\lambda}(u^{-1}) \Ind^{\overline{N}_\gamma}_{\langle u\rangle}\left(k_\lambda \right)\\&
=\frac{1}{|Z(L\langle u\rangle)|} \Ind^{\overline{N}_\gamma}_{\langle u\rangle}\left(F^{\langle u\rangle}_{1,u} \right)\,,
\end{align*}
as desired. 
\end{proof}
 
\begin{lemma}\label{lem Brauerconstructionisom}
Let $\Delta(P,\pi,L)$ be a twisted diagonal subgroup of $G\times L\langle u\rangle$ and let $m_{P,\pi,E}\in b\FFTD(G, L\langle u\rangle) \widetilde{F^{L\langle u\rangle}_{L,u}}$.
\smallskip

{\rm (i)} Let $\Delta(Q,\rho,L)$ be a twisted diagonal subgroup of $G\times L\langle u\rangle$. We have
\begin{align*}
m_{P,\pi,E}[\Delta(Q,\rho,L)]=0
\end{align*}
unless $\Delta(Q,\rho,L)$ is conjugate to $\Delta(P,\pi,L)$.

\smallskip
{\rm (ii)} We have
\begin{align*}
m_{P,\pi,E}\left[\Delta(P,\pi,L)\right] = E\otimes_{k}F^{\langle u\rangle}_{1,u}=\frac{1}{|\langle u\rangle|}\sum_{\lambda:\langle u\rangle\to k^\times}\tilde{\lambda}(u^{-1}) E\otimes_k k_\lambda
\end{align*}
in $\FF\Proj(k\overline{N}_{P,\pi})$. In particular, if $m_{P,\pi,E}\left[\Delta(P,\pi,L)\right]$ is non-zero, then $(P,\pi)\in \calP(G,L,u)$.
\end{lemma}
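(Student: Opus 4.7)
The approach is to apply Proposition~\ref{Brauer quotient} to the defining tensor product $M(\Delta(P,\pi,L),E)\otimes_{kL\langle u\rangle}\widetilde{F^{L\langle u\rangle}_{L,u}}$, after first rewriting $\widetilde{F^{L\langle u\rangle}_{L,u}}=|Z(L\langle u\rangle)|\,F^{L\langle u\rangle\times L\langle u\rangle}_{\Delta L,(u,u)}$ via Remark~\ref{rem conjugatepairs}. The key observation driving both parts is that in a triple $(\alpha,V,\beta)\in \Gamma_{L\langle u\rangle}(Q,\rho,L)$ contributing a nonzero summand, the vertex condition on $M(\Delta(P,\pi,L),E)$ forces $\Delta(Q,\alpha,V)$ to be subconjugate to $\Delta(P,\pi,L)$, while the characterization of the primitive idempotent $F^{L\langle u\rangle\times L\langle u\rangle}_{\Delta L,(u,u)}$ from Remark~\ref{rem conjugatepairs} forces $V=L$ and $\beta=i_z|_L$ for some $z\in L\langle u\rangle$. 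Part~(i) follows immediately: the equality $|\Delta(Q,\alpha,L)|=|L|=|\Delta(P,\pi,L)|$ promotes the subconjugacy to an actual conjugacy, and the identity $\Delta(Q,\alpha\circ i_z,L)=\lexp{(1,z^{-1})}\Delta(Q,\alpha,L)$ yields $\Delta(Q,\rho,L)\sim_{G\times L\langle u\rangle}\Delta(P,\pi,L)$.

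For part~(ii), taking $(Q,\rho)=(P,\pi)$, the same analysis restricts the sum of Proposition~\ref{Brauer quotient} to triples $(\pi\circ i_{z^{-1}},L,i_z)$ with $z\in L\langle u\rangle$, and a short orbit computation shows these all lie in the orbit of the representative $(\pi,L,\id)$ under $N_{G\times L\langle u\rangle}(\Delta(P,\pi,L))\times L\langle u\rangle$. I next observe that the star-subgroup $N_{G\times L\langle u\rangle}(\Delta(P,\pi,L))*N_{L\langle u\rangle\times L\langle u\rangle}(\Delta L)$ equals $N_{G\times L\langle u\rangle}(\Delta(P,\pi,L))$ itself, because $\{1\}\times Z(L)\subseteq N_{G\times L\langle u\rangle}(\Delta(P,\pi,L))$ (since $i_z=\id_L$ for $z\in Z(L)$). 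The induction in Proposition~\ref{Brauer quotient} therefore becomes trivial, and the Brauer quotient reduces to $E\otimes_{kZ(L)}\widetilde{F^{L\langle u\rangle}_{L,u}}[\Delta L]$. Combining Remark~\ref{rem conjugatepairs} with Lemma~\ref{lem brauerconstructionofprimitiveidemp} applied at $\gamma=\id$, the two factors of $|Z(L\langle u\rangle)|$ cancel exactly, giving $\widetilde{F^{L\langle u\rangle}_{L,u}}[\Delta L]=\Ind_{\langle u\rangle}^{Z(L)\langle u\rangle}F^{\langle u\rangle}_{1,u}$; Lemma~\ref{lem isomofmodules} at $\gamma=\id$ then identifies $E\otimes_{kZ(L)}\Ind_{\langle u\rangle}^{Z(L)\langle u\rangle}F^{\langle u\rangle}_{1,u}$ with $E\otimes_k F^{\langle u\rangle}_{1,u}$, and the second displayed equality in~(ii) is just the standard expansion of $F^{\langle u\rangle}_{1,u}$.

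For the ``in particular'' clause, I expand $E\otimes_k F^{\langle u\rangle}_{1,u}=\tfrac{1}{|\langle u\rangle|}\sum_\lambda \tilde\lambda(u^{-1})\,E\otimes_k k_\lambda$, where each $k_\lambda$ is inflated to $\overline{N}_{P,\pi}$ via the canonical map $\Phi\colon \overline{N}_{P,\pi}\to \langle u\rangle$. Were $\Phi$ not surjective, say $\im\Phi=\langle u^d\rangle$ with $d>1$, grouping characters by their restriction to $\langle u^d\rangle$ would turn the coefficient of each isomorphism class of inflated $k_\lambda$ into a sum of all $d$-th roots of unity, hence zero; so non-vanishing of the Brauer quotient forces $\Phi$ surjective, equivalently $(P,\pi)\in\calP(G,L,u)$. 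The main bookkeeping challenge throughout is to verify the exact cancellation of the $|Z(L\langle u\rangle)|$ scalars between Remark~\ref{rem conjugatepairs} and Lemma~\ref{lem brauerconstructionofprimitiveidemp}, and to confirm that the star-subgroup collapses to the full normalizer, which is what makes the induction drop out.
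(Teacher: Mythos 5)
Your proposal is correct and follows essentially the same route as the paper: apply Proposition~\ref{Brauer quotient}, use Remark~\ref{rem conjugatepairs} to kill all triples except those in the orbit of $(\pi,L,\id)$, identify the star subgroup with the full normalizer $N_{G\times L\langle u\rangle}(\Delta(P,\pi,L))$ so the induction is trivial, and then combine Lemma~\ref{lem brauerconstructionofprimitiveidemp} and Lemma~\ref{lem isomofmodules} with the cancellation of the $|Z(L\langle u\rangle)|$ factors. The only (harmless) deviation is the ``in particular'' clause, where you argue via vanishing character sums that non-surjectivity of $\Phi\colon\overline{N}_{P,\pi}\to\langle u\rangle$ would force the Brauer quotient to vanish, whereas the paper evaluates the Brauer character at a $p'$-element $(s,t)$ and uses that the Brauer character of $F^{\langle u\rangle}_{1,u}$ is supported at $u$; the two arguments are equivalent, and yours mirrors the computation the paper itself carries out just before Lemma~\ref{lem isotypic}.
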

\begin{proof}
{\rm (i)} By Proposition \ref{Brauer quotient}, the Brauer quotient of the element
\begin{align*}
m_{P,\pi,E}\in b\FFTD(G, L\langle u\rangle) \widetilde{F^{L\langle u\rangle}_{L,u}}
\end{align*}  
at $\Delta(Q,\rho,L)$ is equal to
\begin{align*}
\bigoplus_{\theta:=(\alpha, V,\beta)}\Ind_{X(\theta) \ast Y(\theta)} ^{N_{G\times L\langle u\rangle}(\Delta(Q,\rho, L))} \left( M\left(\Delta(P,\pi,L), E\right)[\Delta(Q,\alpha, V)]\otimes_{kZ(L)} \widetilde{F^{L\langle u\rangle}_{L,u}}[\Delta(V,\beta, L)]\right)\,,
\end{align*}
in $\FFT\left(N_{G\times L\langle u\rangle}(\Delta(Q,\rho, L))\right)$, where $(\alpha, V,\beta)\in \tilde{\Gamma}_{L\langle u\rangle}(Q,\rho,L)$, $X(\theta)=N_{G\times L\langle u\rangle}(\Delta(Q,\alpha,V))$ and $Y(\theta)=N_{L\langle u\rangle\times L\langle u\rangle}(\Delta(V,\beta,L))$.  By Remark \ref{rem conjugatepairs}, we have $ \widetilde{F^{L\langle u\rangle}_{L,u}}=|Z(L\langle u\rangle|\cdot F^{L\langle u\rangle\times L\langle u\rangle}_{\Delta L, (u,u)}$ which implies that the Brauer quotient $\widetilde{F^{L\langle u\rangle}_{L,u}}[\Delta(V,\beta, L)]$ is zero unless the group $\Delta(V,\beta, L)$ is $(L\langle u\rangle\times L\langle u\rangle)$-conjugate to $\Delta L$. This implies that $V=L$ and the map $\beta$ is an inner automorphism of~$L$. Therefore, up to the action of $N_{G\times L\langle u\rangle}\left(\Delta(Q,\rho,L)\right)\times L\langle u\rangle$, we can assume that $\beta$ is equal to identity and hence $\alpha=\rho$. This shows that
\begin{align*}
m_{P,\pi,E}[\Delta(Q,\rho,L)]=0
\end{align*}
unless $\Delta(Q,\rho,L)$ is conjugate to $\Delta(P,\pi,L)$.

\smallskip
{\rm (ii)} We use the calculations in the proof of part (i). One shows that for $\theta=(\pi, L,\id)$, we have
\begin{align*}
N_{G\times L\langle u\rangle}\left(\Delta(P,\pi,L)\right)=X(\theta)\ast Y(\theta)\,.
\end{align*}  
Therefore, the calculations above, together with Lemma \ref{lem isomofmodules} and Lemma \ref{lem brauerconstructionofprimitiveidemp}, imply that
\begin{align*}
m_{P,\pi,E}[\Delta(P,\pi,L)]&= M\left(\Delta(P,\pi,L), E\right)[\Delta(P,\pi,L)]\otimes_{kZ(L)} \widetilde{F^{L\langle u\rangle}_{L,u}}[\Delta(L)]\\&
= E\otimes_{kZ(L)} \Ind_{\langle u\rangle}^{\overline{N}} \left( F^{\langle u\rangle}_{1,u}\right)\\&
=E\otimes_{k}F^{\langle u\rangle}_{1,u} 
\end{align*}
in $\FFT(\overline{N}_\pi)$. For the second assertion, note that if the Brauer construction $m_{P,\pi,E}[\Delta(P,\pi,L)]$ is non-zero, then its Brauer character is non-zero at some $p'$-element $(s,t)\in N_{G\times L\langle u\rangle}(\Delta(P,\pi, L))$. This implies that the Brauer character of the module $F^{\langle u\rangle}_{1, u}$ is non-zero at $t\in \langle u\rangle$ which in turn implies that $t=u$. This shows that $\left(\Delta(P,\pi,L), (s,u)\right)$ is a pair of $G\times L\langle u\rangle$. 
\end{proof}

\begin{nothing}
(a) Let  $\FF \Proj\left(k\Br_P(b)\overline{N}_{P,\pi}, u\right)$ denote the subgroup consisting of elements $\omega$ of the group $\FF\Proj(k\overline{N}_{P,\pi})$ of projective $k\overline{N}_{P,\pi}$-modules such that

\begin{equation*}
 \left \{
  \begin{aligned}
    &\Br_P(b)w=w \\
    & \omega_\lambda = \tilde{\lambda}(u)\omega ~\text{for any} ~\lambda:\langle u\rangle\to k^\times\,.
  \end{aligned} \right.
\end{equation*}

\smallskip
(b) Let $\FF \Proj\left(k\Br_P(b)\overline{N}_{P,\pi}, u\right)^\sharp$ denote the subgroup of $\FF \Proj\left(k\Br_P(b)\overline{N}_{P,\pi}, u\right)$ consisting of linear combinations of projective indecomposable $k\overline{N}_{P,\pi}$-modules $E$ with isotypic restriction to $C_G(P)$, i.e., $\Res^{\overline{N}_{P,\pi}}_{C_G(P)} E$ has an indecomposable direct summand with the inertia group $\overline{N}_{P,\pi}$.  By Theorem \ref{thm cliffordthm} this is equivalent to requiring that $\Res^{\overline{N}_{P,\pi}}_{C_G(P)} E$ is indecomposable. 
\end{nothing}

\begin{lemma}\label{lem spanofsums}
Elements of $\FF \Proj\left(k\Br_P(b)\overline{N}_{P,\pi}, u\right)$ are linear combinations of sums of the type
\begin{align*}
S_E:= \sum_{\lambda:\langle u\rangle\to k^\times}\tilde{\lambda}(u^{-1}) E_\lambda
\end{align*}
where $E$ is a projective indecomposable $k\overline{N}_{P,\pi}$-module.
\end{lemma}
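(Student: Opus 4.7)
The plan is to expand a general $\omega\in\FF\Proj(k\overline{N}_{P,\pi})$ in the canonical basis of projective indecomposable $k\overline{N}_{P,\pi}$-modules, translate each of the two defining conditions into linear constraints on the coefficients, and then regroup the sum according to the orbits of the action $F\mapsto F_\lambda$ of the character group $\langle u\rangle^{\ast}:=\Hom(\langle u\rangle,k^{\times})$ on PIMs.

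Concretely, I would first write $\omega=\sum_{F}a_{F}F$, with $F$ running through the isomorphism classes of PIMs of $k\overline{N}_{P,\pi}$ and $a_{F}\in\FF$. Since $\Br_P(b)$ is a central idempotent acting on PIMs through the canonical map $\overline{N}_{P,\pi}\to \overline{N}_{P,\pi}/C_G(P)$... more precisely through its image in $kC_G(P)$, the first defining condition $\Br_P(b)\omega=\omega$ simply forces $a_{F}=0$ whenever $\Br_P(b)F\neq F$, so one may restrict attention to the PIMs lying in the block $\Br_P(b)$.

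For the twist condition, I would reindex by substituting $G=F_\lambda$: this gives $\omega_{\lambda}=\sum_{F}a_{F}F_{\lambda}=\sum_{G}a_{G_{\lambda^{-1}}}G$. The condition $\omega_{\lambda}=\tilde\lambda(u)\omega$ therefore translates into the recursion
$$a_{F_{\lambda}}=\tilde\lambda(u^{-1})\,a_{F}\qquad\text{for every PIM }F\text{ and every }\lambda\in\langle u\rangle^{\ast}.$$
The key observation is then the following cancellation: if some nontrivial $\lambda$ stabilises $F$ (i.e.\ $F_\lambda\cong F$), then $a_{F}=\tilde\lambda(u^{-1})a_{F}$, and since $\lambda\neq 1$ forces $\tilde\lambda(u^{-1})\neq 1$ (as $u$ generates $\langle u\rangle$ and $\tilde\lambda(u)=\lambda(u)$ is a nontrivial root of unity in $k$), this gives $a_{F}=0$. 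Hence $\omega$ is supported on PIMs with trivial $\langle u\rangle^{\ast}$-stabiliser.

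Finally I would pick a set $\mathcal R$ of representatives of the free $\langle u\rangle^{\ast}$-orbits of PIMs in the block $\Br_P(b)$, and gather terms orbit by orbit: using the recursion,
$$\omega=\sum_{E\in\mathcal R}\sum_{\lambda\in\langle u\rangle^{\ast}}a_{E_{\lambda}}E_{\lambda}=\sum_{E\in\mathcal R}a_{E}\sum_{\lambda\in\langle u\rangle^{\ast}}\tilde\lambda(u^{-1})E_{\lambda}=\sum_{E\in\mathcal R}a_{E}\,S_{E},$$
which is the desired expression. There is no real obstacle here: the substantive step is the single-line cancellation argument for stabilised PIMs, and the remainder is bookkeeping for the orbit decomposition. (As a sanity check, one observes that for an $E$ with nontrivial stabiliser the sum $S_E$ itself vanishes, which is consistent with the free-orbit support.)
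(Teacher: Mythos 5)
Your proposal is correct and follows essentially the same route as the paper: expand in the basis of projective indecomposables and use the relation $\omega_\lambda=\tilde{\lambda}(u)\omega$ to see that the coefficient of $E_\lambda$ is $\tilde{\lambda}(u^{-1})$ times that of $E$, so the support decomposes into the sums $S_E$. Your extra observations (vanishing of coefficients on PIMs with nontrivial $\widehat{\langle u\rangle}$-stabiliser, and the block condition restricting the support) are correct refinements that the paper leaves implicit, but they do not change the argument.
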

\begin{proof}
Let $v\in \FF \Proj\left(k\Br_P(b)\overline{N}_{P,\pi}, u\right)$ be an arbitrary element and let $E$ be a projective indecomposable $k\overline{N}_{P,\pi}$-module appearing in $v$ with a nonzero coefficient $x$. Since $v_\lambda=\tilde{\lambda}(u)v$ for any $\lambda:\langle u\rangle\to k^\times$, it follows that the coefficient of $E_\lambda$ in $v$ is $\tilde{\lambda}(u^{-1}) \cdot x$. Hence the sum $S_E$ appears in~$v$ with the coefficient $x$. 
\end{proof}

Let $(P,\pi)\in \calP(G,L,u)$ and $m_{P,\pi,E}\in b\FFTD(G, L\langle u\rangle) \widetilde{F^{L\langle u\rangle}_{L,u}}$. Since $bm_{P,\pi,E}=m_{P,\pi,E}$, it follows that $\Br_P(b)$ acts as the identity on $E$, where we identify $C_G(P)$ with its image in $\overline{N}_{P,\pi}$. Moreover, by Lemma \ref{lem Brauerconstructionisom}, we have
\begin{align*}
m_{P,\pi,E}[\Delta(P,\pi,L)] =  \frac{1}{|\langle u\rangle|}\sum_{\lambda:\langle u\rangle\to k^\times}\tilde{\lambda}(u^{-1}) E_\lambda \,.
\end{align*} 
This implies that $m_{P,\pi,E}[\Delta(P,\pi,L)]$ lies in $\FF \Proj\left(k\Br_P(b)\overline{N}_{P,\pi}, u\right)$.

Let $F$ be an indecomposable direct summand of the restriction of $E$ to $C_G(P)$, and let $T$ be its inertial subgroup in $\overline{N}_{P,\pi}$. By Clifford theory, we have $E\cong \Ind_{T}^{\overline{N}_{P,\pi}} W$ for some indecomposable direct summand $W$ of $\Ind_{C_G(P)}^T F$. The isomorphism
\begin{align*}
(\Ind_{T}^{\overline{N}_{P,\pi}} W)\otimes_k k_{\lambda} &\to \Ind_{T}^{\overline{N}_{P,\pi}} (W\otimes_k k_{\lambda}) \\
(n\otimes w)\otimes k_0 &\mapsto n\otimes (w\otimes n^{-1}\cdot k_0)
\end{align*}
implies that we have
\begin{align*}
\sum_{\lambda:\langle u\rangle\to k^\times}\tilde{\lambda}(u^{-1}) E\otimes_k k_\lambda=\Ind_{T}^{\overline{N}_{P,\pi}}\left(\sum_{\lambda:\langle u\rangle\to k^\times}\tilde{\lambda}(u^{-1}) W\otimes_k k_\lambda\right)\,.
\end{align*}
The module $W\otimes_k k_\lambda$ depends only on the restriction of $\lambda$ to the image $\langle u^i\rangle$ of $T$ in $\langle u\rangle$. Let $\lambda_0:\langle u\rangle\to k^\times$ be given. If $\langle u^i\rangle$ is a proper subgroup of $\langle u\rangle$, then the element $u^{-1} \langle u^i\rangle \in \langle u\rangle /\langle u^i\rangle$ is not identity and hence the sum
\begin{align*}
\sum_{\substack{\lambda:\langle u\rangle\to k^\times \\ \lambda |_{\langle u^i\rangle}=\lambda_0 |_{\langle u^i\rangle}}} \tilde{\lambda}(u^{-1}) &= \sum_{\substack{\lambda:\langle u\rangle\to k^\times \\ \langle u^i\rangle \le \ker(\lambda \lambda_0^{-1})}} \tilde{\lambda}(u^{-1}) = \sum_{\substack{\lambda:\langle u\rangle\to k^\times \\ \langle u^i\rangle \le \ker(\lambda \lambda_0^{-1})}} \tilde{\lambda_0}(u^{-1}) \widetilde{\lambda \lambda_0^{-1}}(u^{-1})\\&
=\sum_{\overline{\lambda}:\langle u\rangle /\langle u^i\rangle\to k^\times} \tilde{\lambda_0}(u^{-1}) \Inf_{\langle u\rangle /\langle u^i\rangle}^{\langle u\rangle} \overline{\lambda}(u^{-1})\\&
=\sum_{\overline{\lambda}:\langle u\rangle /\langle u^i\rangle\to k^\times} \tilde{\lambda_0}(u^{-1}) \overline{\lambda}(u^{-1}\langle u^i\rangle)
\end{align*}
is zero. Therefore, $m_{P,\pi,E}[\Delta(P,\pi,L)]=0$ unless $T=\overline{N}_{P,\pi}$. This proves the following.

\begin{lemma}\label{lem isotypic}
Let $(P,\pi)\in \calP(G,L,u)$ and $m_{P,\pi,E}\in b\FFTD(G, L\langle u\rangle) \widetilde{F^{L\langle u\rangle}_{L,u}}$. We have $m_{P,\pi,E}[\Delta(P,\pi,L)]=0$ unless the restriction of $E$ to $C_G(P)$ is isotypic. In particular, $m_{P,\pi,E}[\Delta(P,\pi,L)]$ lies in $\FF \Proj\left(k\Br_P(b)\overline{N}_{P,\pi}, u\right)^\sharp$.
\end{lemma}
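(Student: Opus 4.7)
The plan is to invoke the explicit Brauer-quotient formula from Lemma~\ref{lem Brauerconstructionisom}(ii) and to extract, via Clifford theory applied to the extension $C_G(P) \trianglelefteq \overline{N}_{P,\pi}$ with cyclic $p'$-quotient $\langle u\rangle$, precisely the isotypic condition. This puts us in the exact setting of Section~\ref{sec CliffordTheory}, so Theorem~\ref{thm cliffordthm} will be available to upgrade ``isotypic'' to ``indecomposable'' at the end.

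Concretely, I would start from the identity
\[
m_{P,\pi,E}[\Delta(P,\pi,L)] \;=\; \frac{1}{|\langle u\rangle|}\sum_{\lambda:\langle u\rangle\to k^\times}\tilde{\lambda}(u^{-1})\, E\otimes_k k_\lambda
\]
given by Lemma~\ref{lem Brauerconstructionisom}(ii). Choosing an indecomposable summand $F$ of $\Res^{\overline{N}_{P,\pi}}_{C_G(P)} E$ with inertial subgroup $T$, Clifford theory yields $E \cong \Ind_T^{\overline{N}_{P,\pi}} W$ for some summand $W$ of $\Ind_{C_G(P)}^T F$. Let $\langle u^i\rangle$ be the image of $T$ under the quotient $\overline{N}_{P,\pi} \twoheadrightarrow \langle u\rangle$. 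The natural isomorphism $(\Ind_T^{\overline{N}_{P,\pi}} W)\otimes_k k_\lambda \cong \Ind_T^{\overline{N}_{P,\pi}}(W\otimes_k k_\lambda)$ implies that $E\otimes_k k_\lambda$ depends only on the restriction $\lambda|_{\langle u^i\rangle}$. Regrouping the sum by these restrictions, the inner $\lambda$-sum for a fixed $\lambda_0:\langle u^i\rangle \to k^\times$ equals $\tilde{\lambda}_0(u^{-1})\sum_{\overline{\lambda}:\langle u\rangle/\langle u^i\rangle \to k^\times}\overline{\lambda}(u^{-1}\langle u^i\rangle)$, which vanishes by orthogonality of characters unless $u^{-1}\langle u^i\rangle$ is trivial in the quotient, equivalently $T = \overline{N}_{P,\pi}$.

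In that case, $F$ is $\overline{N}_{P,\pi}$-invariant, so Theorem~\ref{thm cliffordthm}(ii) gives that $\Res^{\overline{N}_{P,\pi}}_{C_G(P)}E$ is indecomposable, hence in particular isotypic. For the ``in particular'' assertion, the relation $b\,m_{P,\pi,E} = m_{P,\pi,E}$ forces $\Br_P(b)$ to act as the identity on $E$, and the displayed formula shows that $m_{P,\pi,E}[\Delta(P,\pi,L)]$ is $\tfrac{1}{|\langle u\rangle|}$ times the sum $S_E$ of Lemma~\ref{lem spanofsums}; combined with the isotypic conclusion, this places it in $\FF\Proj(k\Br_P(b)\overline{N}_{P,\pi},u)^\sharp$. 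The only genuinely delicate step is the orthogonality-based vanishing of the inner character sum; the remainder is a mechanical assembly of the Clifford theory prepared in Section~\ref{sec CliffordTheory} together with the Brauer-quotient identity.
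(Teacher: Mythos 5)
Your proposal is correct and is essentially the paper's own argument: the paper likewise starts from the formula of Lemma~\ref{lem Brauerconstructionisom}(ii), writes $E\cong\Ind_T^{\overline{N}_{P,\pi}}W$ for the inertial subgroup $T$ of a summand $F$ of $\Res^{\overline{N}_{P,\pi}}_{C_G(P)}E$, observes that $E\otimes_k k_\lambda$ depends only on $\lambda|_{\langle u^i\rangle}$, and kills the regrouped character sum unless $T=\overline{N}_{P,\pi}$, then uses Theorem~\ref{thm cliffordthm} and the identity $bm_{P,\pi,E}=m_{P,\pi,E}$ for the membership in $\FF\Proj\left(k\Br_P(b)\overline{N}_{P,\pi},u\right)^\sharp$. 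No gaps; your equivalence ``image of $T$ is all of $\langle u\rangle$ iff $T=\overline{N}_{P,\pi}$'' is justified since $T\supseteq C_G(P)$.
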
 

\begin{theorem}\label{thm isomprojective}
The map
\begin{align*}
\Phi: b\FFTD(G, L\langle u\rangle) \widetilde{F^{L\langle u\rangle}_{L,u}} \to \bigoplus_{(P,\pi)\in [(G\times L\langle u\rangle)\backslash \calP(G,L,u)]} \FF \Proj\left(k\Br_P(b)\overline{N}_{P,\pi}, u\right)^\sharp
\end{align*}
sending an element $v$ to the sequence of its Brauer quotients $v[\Delta(P,\pi,L)]$, for $(P,\pi)$ in a set of representatives of $G\times L\langle u\rangle$-orbits of $\calP(G,L,u)$, is an isomorphism of $\FF$-vector spaces.  
\end{theorem}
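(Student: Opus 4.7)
The plan is to prove $\Phi$ is an isomorphism in three steps: well-definedness, surjectivity, and injectivity.

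Well-definedness follows immediately from the preceding lemmas. The source is spanned by elements of the form $m_{P,\pi,E}$ (as explained in the discussion preceding Lemma~\ref{lem spanofsums}), so it suffices to verify the claim on these. Lemma~\ref{lem Brauerconstructionisom}(ii) gives an explicit formula for $m_{P,\pi,E}[\Delta(P,\pi,L)]$, and Lemma~\ref{lem isotypic} combined with the condition $bv=v$ (which forces $\Br_P(b)$ to act as the identity on any surviving $E$) shows that each Brauer quotient lies in the required subspace $\FF\Proj(k\Br_P(b)\overline{N}_{P,\pi},u)^\sharp$.

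For surjectivity, given a family $(\omega_{P,\pi})$ in the target, I construct a preimage component by component. Lemma~\ref{lem spanofsums} writes each $\omega_{P,\pi}$ as a linear combination $\sum_E c_E S_E$, where $E$ runs over projective indecomposable $k\overline{N}_{P,\pi}$-modules with $\Br_P(b)E=E$ and isotypic restriction to $C_G(P)$. By Lemma~\ref{lem Brauerconstructionisom}(ii), the element $\sum_E |\langle u\rangle|\cdot c_E\, m_{P,\pi,E}$ has Brauer quotient $\omega_{P,\pi}$ at $\Delta(P,\pi,L)$, while Lemma~\ref{lem Brauerconstructionisom}(i) ensures its Brauer quotient vanishes at $\Delta(P',\pi',L)$ for every other representative $(P',\pi')$, since distinct orbit representatives give non-conjugate twisted diagonal subgroups. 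Summing over all representatives yields the desired preimage.

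For injectivity, suppose $\Phi(v)=0$. I will show that every Brauer quotient of $v$ at every twisted diagonal subgroup of $G\times L\langle u\rangle$ vanishes, which forces $v=0$ since an element of $\FFTD(G,L\langle u\rangle)$ is detected by its Brauer characters at twisted diagonal pairs. Three cases arise for a twisted diagonal subgroup $\Delta(Q,\rho,L')$. First, the factor $\widetilde{F^{L\langle u\rangle}_{L,u}}$ on the right forces the Brauer quotient to vanish unless $L'$ is $L\langle u\rangle$-conjugate to $L$, by the argument of Lemma~\ref{lem Brauerconstructionisom}(i). Second, for $(Q,\rho)\in\calP(G,L,u)$, the $G\times L\langle u\rangle$-equivariance of the Brauer construction reduces vanishing to vanishing at the orbit representative, which is precisely the hypothesis. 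Third, for $(Q,\rho)\notin\calP(G,L,u)$, I express $v$ as a linear combination of spanning elements $m_{P,\pi,E}$ with $(P,\pi)\in\calP(G,L,u)$ and $E$ of isotypic restriction (the remaining $m_{P,\pi,E}$ being zero in the source by Lemmas~\ref{lem Brauerconstructionisom} and~\ref{lem isotypic}); by Lemma~\ref{lem Brauerconstructionisom}(i), each such contribution vanishes at $\Delta(Q,\rho,L)$, because otherwise conjugacy would place $(Q,\rho)$ in the orbit of some $(P,\pi)\in\calP(G,L,u)$, contradicting orbit-closedness of $\calP(G,L,u)$.

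The main obstacle will be checking the third case, namely pruning the spanning set of the source to $(P,\pi)\in\calP(G,L,u)$ with $E$ of isotypic restriction and confirming that $\calP(G,L,u)$ is stable under the $G\times L\langle u\rangle$-action. The first follows from the vanishing criterion of Lemma~\ref{lem Brauerconstructionisom}(ii) plus Lemma~\ref{lem isotypic}, while the second is implicit in the construction of \ref{noth introductionofcalP}(d), where a witnessing $p$-regular element is extracted from the $p$-part of any normalizing element.
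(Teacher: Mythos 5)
Your proof is correct and essentially coincides with the paper's: surjectivity by realizing each $S_E$ (for $E$ with $\Br_P(b)E=E$ and isotypic restriction to $C_G(P)$) as a nonzero scalar multiple of the image of $m_{P,\pi,E}$ via Lemmas \ref{lem spanofsums}, \ref{lem Brauerconstructionisom} and \ref{lem isotypic}, and injectivity because a kernel element has all its Brauer quotients equal to zero and is therefore zero — your three-case analysis simply makes explicit what the paper states in one line. The only loose phrase is the claim that the $m_{P,\pi,E}$ with $(P,\pi)\notin\calP(G,L,u)$ or with non-isotypic $E$ are ``zero in the source by Lemmas \ref{lem Brauerconstructionisom} and \ref{lem isotypic}'': those lemmas only give vanishing of the relevant Brauer quotients, and concluding that the elements themselves vanish requires the same detection-by-Brauer-characters principle you already invoke (or the pruning can be bypassed altogether, since by Lemma \ref{lem Brauerconstructionisom} such terms contribute nothing at $\Delta(Q,\rho,L)$), so this is a matter of wording rather than a gap.
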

\begin{proof}
An element in the kernel of $\Phi$ has all its Brauer quotients equal to zero, so it is zero. Hence $\Phi$ is injective. By Lemma \ref{lem spanofsums}, an element of $\FF \Proj\left(k\Br_P(b)\overline{N}_{P,\pi}, u\right)^\sharp$ is a linear combination of the elements $S_E$, where $E$ is a projective indecomposable $k\overline{N}_{P,\pi}$-module with isotypic restriction to $C_G(P)$. The Brauer quotient $m_{P,\pi,E}[\Delta(Q,\rho,L)]$ is equal to zero if $\Delta(Q,\rho,L)$ is not conjugate to $\Delta(P,\pi,L)$, and $m_{P,\pi,E}[\Delta(P,\pi,L)]$ is equal to a nonzero scalar multiple of $S_E$. Hence $\Phi$ is also surjective. 
\end{proof}

\begin{nothing}\label{thesetZ}
We define $\calZ=\calZ(G,L,u)$ as the set of triples $(P,\pi,E)$ where
\begin{itemize}
\item $P$ is a $p$-subgroup of $G$.
\item $\pi:L\to P$ is a group isomorphism such that there exists a $p'$-element $s\in G$ with $\pi(\lexp{u}l)=\lexp{s}\pi(l)$ for all $l\in L$.  
\item $E$ is a projective indecomposable $k\Br_P(b)\overline{N}_{P,\pi}$-module such that $\Res^{\overline{N}_{P,\pi}}_{C_G(P)} E$ is indecomposable. 
\end{itemize}
With the notation above this means that $(P,\pi)\in\calP(G,L,u)$.

\smallskip
{\rm (b)} The group $G\times L\langle u\rangle$ acts on $\calZ$ by
\begin{align*}
(g,t)\cdot (P,\pi,E):=(\lexp{g}P, i_g\pi i_{t^{-1}}, \lexp{(g,t)}E)
\end{align*}
for $(g,t)\in G\times L\langle u\rangle$ and $(P,\pi,E)\in\calZ$. Here $\lexp{(g,t)}E$ is the $k\overline{N}_{\lexp{g}P,i_g\pi i_{t^{-1}}}$-module equal to $E$ as a $k$-vector space and on which $\overline{(a,b)}\in \overline{N}_{\lexp{g}P,i_g\pi i_{t^{-1}}}$ acts by
\begin{align*}
\overline{(a,b)}\cdot \lexp{(g,t)}e:= \overline{(a^g,b^t)}e \,.
\end{align*}
To show that the action is well-defined, we first show that there exists a $p'$-element $a\in G$ such that $(i_g\pi i_{t^{-1}})(\lexp{u}l)=\lexp{a}{\left((i_g\pi i_{t^{-1}})(l)\right)}$ for all $l\in L$.  By \ref{thesetZ}{(a)}, it suffices to show that there exists $a\in G$ with this property.  In other words, we need to show that $\overline{(a,u)}\in \overline{N}_{\lexp{g}P,i_g\pi i_{t^{-1}}}$ for some $a\in G$.  This is equivalent to the condition that $\overline{(a^g,u^t)}\in \overline{N}_{P,\pi}$.  Now one shows that $u^t=l_0\cdot u$ for some $l_0\in L$. Therefore for any $l\in L$ we have
\begin{align*}
\pi(\lexp{u^t}l)=\pi(l_0)\pi(\lexp{u}l)\pi(l_0^{-1})=\pi(l_0) s \pi(l) s^{-1} \pi(l_0^{-1})\,.
\end{align*}
Hence the element $a= g\pi(l_0)sg^{-1} \in G$ satisfies the desired condition.  

Also since the action on $\lexp{(g,t)}E$ is induced from the action on $E$, it follows that $\lexp{(g,t)}E$ is a projective indecomposable $k\Br_{\lexp{g}P}(b) \overline{N}_{\lexp{g}P,i_g\pi i_{t^{-1}}}$-module whose restriction to $kC_G(\lexp{g}P)$ is also indecomposable. 

\smallskip
{\rm (c)} The group $\Aut(L,u)$ acts on $\calZ$ by
\begin{align*}
\varphi\cdot(P,\pi,E):=(P,\pi\varphi^{-1}, \lexp{\varphi}E)
\end{align*}
for $\varphi\in\Aut(L,u)$ and $(P,\pi,E)\in\calZ$, where $\lexp{\varphi}E$ is the $k\overline{N}_{P,\pi\varphi^{-1}}$-module equal to $E$ as the $k$-vector space on which $\overline{(a,b)}\in \overline{N}_{P,\pi\varphi^{-1}}$ acts via
\begin{align*}
\overline{(a,b)}\cdot \lexp{\varphi}e:= \overline{(a,\varphi^{-1}(b))}e \,.
\end{align*}
To show that this action is well-defined, we first need to show that $\overline{(a,u)}\in \overline{N}_{P,\pi\varphi^{-1}}$ for some $a\in G$.  Since $\varphi$ maps $u$ to a conjugate of $u$, the existence of $a$ is similar to Part(b).  

Also, since $\Res^{\overline{N}_{P,\pi\varphi^{-1}}}_{C_G(P)} \left( \lexp{\varphi}E\right) = \Res^{\overline{N}_{P,\pi}}_{C_G(P)} E$, it follows that the action is well-defined. 

\smallskip
{\rm (d)} The group $\widehat{\langle u\rangle}=\Hom(\langle u\rangle, k^\times)$ acts $\calZ$ on the right by
\begin{align*}
(P,\pi,E)\cdot \lambda := (P,\pi,E_\lambda)
\end{align*}
where $E_\lambda$ is the $k\overline{N}_{P,\pi}$-module equal to $E$ as the $k$-vector space on which $\overline{(a,b)}\in \overline{N}_{P,\pi}$ acts by
\begin{align*}
\overline{(a,b)}\cdot e_\lambda := \tilde{\lambda}(b)\overline{(a,b)}e\,.
\end{align*}
Here $\tilde{\lambda}:L\langle u\rangle\to \langle u\rangle\to k^\times$, $l\cdot u \mapsto \lambda(u)$ is the composition map. 

Since $E_\lambda$ is projective indecomposable $k\Br_P(b)\overline{N}_{P,\pi}$-module and since $\Res^{\overline{N}_{P,\pi}}_{C_G(P)} E_\lambda =\Res^{\overline{N}_{P,\pi}}_{C_G(P)}E$ is indecomposable, it follows that this action is well-defined. 

\smallskip
{\rm (e)} The group $\Aut(L,u)$ acts on $G\times L\langle u\rangle$ by $\varphi\cdot (g,t):=(g,\varphi(t))$ for $\varphi\in\Aut(L,u)$ and $(g,t)\in G\times L\langle u\rangle$. We set $S:= (G\times L\langle u\rangle)\rtimes \Aut(L,u)$ using this action. Let $\left((g,t),\varphi\right)\in (G\times L\langle u\rangle)\rtimes \Aut(L,u)$ and $(P,\pi,E)\in\calZ$.  Then for any $l\in L$, we have
\begin{align*}
(i_g\pi i_{t^{-1}} \varphi^{-1})(l)&=i_g\pi \left(t^{-1}\varphi^{-1}(l)t\right)= i_g\pi\left(\varphi^{-1}(\varphi(t^{-1})) \varphi^{-1}(l)\varphi^{-1}(\varphi(t))\right)\\&
=i_g\pi\left(\varphi^{-1}(\varphi(t^{-1}) l \varphi(t))\right)=\left(i_g\pi\varphi^{-1}i_{\varphi(t^{-1})} \right)(l)\,.
\end{align*}
Moreover, for $\overline{(a,b)}\in\overline{N}_{\lexp{g}P,i_g\pi i_{t^{-1}} \varphi^{-1}} = \overline{N}_{\lexp{g}P,i_g\pi \varphi^{-1} i_{\varphi(t^{-1})} }$ and $e\in E$, we have
\begin{align*}
\overline{(a,b)}\cdot \lexp{\varphi}{\left(\lexp{(g,t)} e \right)} &= \overline{(a,\varphi^{-1}(b))}\cdot \lexp{(g,t)} e = \overline{(a^g,\varphi^{-1}(b)^t)}  e\\&
=\overline{(a^g,\varphi^{-1}(b^{\varphi(t)}))}  e = \overline{(a^g,b^{\varphi(t)})} \cdot \lexp{\varphi} e \\&
= \overline{(a,b)}\cdot \lexp{(g,\varphi(t))}{\left(\lexp{\varphi} e \right)} \,.
\end{align*}
These show that
\begin{align*}
\varphi\cdot\left((g,t)\cdot (P,\pi,E)\right) = (g,\varphi(t))\cdot \left(\varphi\cdot (P,\pi,E)\right)\,.
\end{align*}
Therefore the group $S$ acts on $\calZ$.

\smallskip
{\rm (f)} Let $(g,t)\in G\times L\langle u\rangle$, $\varphi\in \Aut(L,u)$, $\lambda\in\widehat{\langle u\rangle}$ and $(P,\pi,E)\in\calZ$.  

For any $b=l_1 u^j\in L\langle u\rangle$, we have $b^t=l_2 u^j$ for some $l_2\in L$, and hence $\tilde{\lambda}(b)=\tilde{\lambda}(b^t)$. Therefore for any $\overline{(a,b)}\in \overline{N}_{\lexp{g}P, i_g\pi i_{t^{-1}}}$ and $e\in E$, we have
\begin{align*}
\overline{(a,b)} \cdot (\lexp{(g,t)} e)_\lambda &= \tilde{\lambda}(b)\overline{(a,b)}\cdot \lexp{(g,t)}e = \tilde{\lambda}(b)\overline{(a^g,b^t)}e = \tilde{\lambda}(b^t)\overline{(a^g,b^t)}e\\&
= \overline{(a^g,b^t)}\cdot e_\lambda = \overline{(a,b)}\cdot \lexp{(g,t)}(e_\lambda)\,.
\end{align*}
This means that 
\begin{align*}
\left((g,t)\cdot (P,\pi,E)\right)\cdot \lambda = (g,t)\cdot \left((P,\pi,E)\cdot\lambda\right)\,,
\end{align*}
i.e., the actions of $G\times L\langle u\rangle$ and $\widehat{\langle u\rangle}$ on $\calZ$ commute. 

Since $\varphi$ maps $u$ to a conjugate of $u$, one has that $\tilde{\lambda}(b)=\tilde{\lambda}(\varphi^{-1}(b))$ for any $b\in L\langle u\rangle$. Therefore calculations similar to above show that
\begin{align*}
\left(\varphi\cdot (P,\pi,E)\right)\cdot \lambda = \varphi\cdot \left((P,\pi,E)\cdot\lambda\right)\,,
\end{align*}
i.e., the actions of $\Aut(L,u)$ and $\widehat{\langle u\rangle}$ on $\calZ$ commute.  These imply that $\calZ$ is an $(S,\widehat{\langle u\rangle})$-biset.  Note that since $E_\lambda = E$ if and only if $\lambda=1$, it follows that $\calZ$ is free on the right.

\smallskip
{\rm (g)}
We have a map from $\calZ$ to $b\FFTD(G,L\langle u\rangle)\widetilde{F^{L\langle u\rangle}_{L,u}}$ sending $(P,\pi,E)$ to $m_{P,\pi,E}$.  This extends to a linear map
\begin{align*}
\Theta: \FF \calZ\to b\FFTD(G,L\langle u\rangle)\widetilde{F^{L\langle u\rangle}_{L,u}} \,.
\end{align*}
\end{nothing}

\begin{lemma}\label{lem propofTheta}
Let $(g,t)\in G\times L\langle u\rangle$, $\varphi\in \Aut(L,u)$, $\lambda\in\widehat{\langle u\rangle}$ and $z\in\calZ$.  Then

\smallskip
{\rm (i)} $\Theta((g,t)z)=\Theta(z)$.

\smallskip
{\rm (ii)} $\Theta(z\lambda)=\lambda(u) \Theta(z)$.

\smallskip
{\rm (iii)} $\Theta(\varphi z)=\Theta(z)\varphi^{-1}$.
\end{lemma}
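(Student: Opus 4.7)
My plan is to prove all three parts simultaneously by invoking Theorem \ref{thm isomprojective}: an element of $b\FFTD(G,L\langle u\rangle)\widetilde{F^{L\langle u\rangle}_{L,u}}$ is uniquely determined by the collection of its Brauer quotients at twisted diagonals $\Delta(P,\pi,L)$ for $(P,\pi)$ in a set of orbit representatives of $\calP(G,L,u)$. By Lemma \ref{lem Brauerconstructionisom}(i), for each $m_{P,\pi,E}$ only the Brauer quotient at (the class of) $\Delta(P,\pi,L)$ is nonzero, so each identity reduces to a single check.

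For \emph{part} (i), the bimodule $M(\Delta(P,\pi,L),E)$ is by construction the unique indecomposable $p$-permutation $(kG,kL\langle u\rangle)$-bimodule with the prescribed vertex and source, so it depends only on the $(G\times L\langle u\rangle)$-conjugacy class of the pair $(\Delta(P,\pi,L),E)$. The definition of the $(G\times L\langle u\rangle)$-action in \ref{thesetZ}(b) says precisely that $(\Delta(\lexp{g}P,i_g\pi i_{t^{-1}},L),\lexp{(g,t)}E)$ is this conjugate, hence $M(\Delta(\lexp{g}P,i_g\pi i_{t^{-1}},L),\lexp{(g,t)}E)\cong M(\Delta(P,\pi,L),E)$ as bimodules. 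Right-multiplying by $\widetilde{F^{L\langle u\rangle}_{L,u}}$ therefore yields the same element, proving (i).

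For \emph{part} (ii), Lemma \ref{lem Brauerconstructionisom}(ii) gives
\[
m_{P,\pi,E_\lambda}[\Delta(P,\pi,L)]=\frac{1}{|\langle u\rangle|}\sum_{\mu}\tilde{\mu}(u^{-1})(E_\lambda)_\mu=\frac{1}{|\langle u\rangle|}\sum_{\mu}\tilde{\mu}(u^{-1})E_{\lambda\mu},
\]
using $(E_\lambda)_\mu=E_{\lambda\mu}$. The substitution $\nu=\lambda\mu$ turns this into $\tilde{\lambda}(u)\cdot\frac{1}{|\langle u\rangle|}\sum_{\nu}\tilde{\nu}(u^{-1})E_\nu=\lambda(u)\,m_{P,\pi,E}[\Delta(P,\pi,L)]$, and Theorem \ref{thm isomprojective} finishes the argument.

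\emph{Part} (iii) is the main obstacle. Under the isomorphism of Corollary \ref{cor algebraisom}, right multiplication by $\varphi^{-1}\in\Out(L,u)$ amounts to tensoring on the right with $|Z(L\langle u\rangle)|\cdot F^{L\langle u\rangle\times L\langle u\rangle}_{\Delta(L,\varphi^{-1},L),(u,u)}$. Using Proposition \ref{Brauer quotient} to compute Brauer quotients of the tensor product, the only nonvanishing twisted diagonal (up to conjugation) is $\Delta(P,\pi\varphi^{-1},L)$, matching the vertex of $\Theta(\varphi z)=m_{P,\pi\varphi^{-1},\lexp{\varphi}E}$. I then evaluate both sides at $\Delta(P,\pi\varphi^{-1},L)$: for the right-hand side I combine Lemma \ref{lem Brauerconstructionisom}(ii) applied to $m_{P,\pi,E}$ with Lemma \ref{lem brauerconstructionofprimitiveidemp}, which provides the Brauer quotient of the idempotent $F^{L\langle u\rangle\times L\langle u\rangle}_{\Delta(L,\varphi^{-1},L),(u,u)}$, and I translate the normalizer along the canonical isomorphism $(\ref{eqn isomofnormalizers})$, which is exactly the move that turns the $k\overline{N}_{P,\pi}$-action on $E$ into the $k\overline{N}_{P,\pi\varphi^{-1}}$-action on $\lexp{\varphi}E$. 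A mass-counting computation analogous to the one in the proof of Lemma \ref{lem tensorprodofidempotents} shows that the scalar $|Z(L\langle u\rangle)|$ inserted above cancels the $\frac{1}{|Z(L\langle u\rangle)|}$ from that lemma, producing $\lexp{\varphi}E\otimes_k F^{\langle u\rangle}_{1,u}=m_{P,\pi\varphi^{-1},\lexp{\varphi}E}[\Delta(P,\pi\varphi^{-1},L)]$. The hardest part is tracking the compatibility between the twisting action defining $\lexp{\varphi}E$ and the change of variables provided by $(\ref{eqn isomofnormalizers})$, but once this is identified the equality of Brauer quotients is forced and Theorem \ref{thm isomprojective} concludes.
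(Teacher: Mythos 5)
Your proposal is correct and follows essentially the same route as the paper: the paper's own proof simply asserts the identities $m_{\lexp{g}P,i_g\pi i_{t^{-1}},\lexp{(g,t)}E}=m_{P,\pi,E}$, $m_{P,\pi,E_\lambda}=\lambda(u)m_{P,\pi,E}$ and $m_{P,\pi\varphi^{-1},\lexp{\varphi}E}=\Theta(P,\pi,E)\varphi^{-1}$, and the justification you spell out — comparing Brauer quotients via Lemma~\ref{lem Brauerconstructionisom}, Lemma~\ref{lem brauerconstructionofprimitiveidemp}, Lemma~\ref{lem isomofmodules} (equivalently the normalizer isomorphism (\ref{eqn isomofnormalizers})) and concluding by the injectivity in Theorem~\ref{thm isomprojective} — is exactly the machinery the paper develops for this purpose. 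Your write-up is just more explicit than the paper's one-line computations, and the identification step in (iii) is carried precisely by Lemma~\ref{lem isomofmodules} as you indicate.
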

\begin{proof}
{\rm (i)} Let $z=(P,\pi,E)\in\calZ$. We have
\begin{align*}
\Theta\left((g,t)(P,\pi,E)\right)=\Theta((\lexp{g}P,i_g\pi i_{t^{-1}}, \lexp{(g,t)}E))=m_{\lexp{g}P,i_g\pi i_{t^{-1}}, \lexp{(g,t)}E} = m_{P,\pi,E}\in \FFTD(G,L\langle u\rangle)\,.
\end{align*} 
Hence {\rm (i)} follows.

\smallskip
{\rm (ii)} We have
\begin{align*}
\Theta((P,\pi,E)\lambda)=\Theta(P,\pi,E_\lambda)=m_{P,\pi,E_\lambda} = \lambda(u)m_{P,\pi,E}=\lambda(u)\Theta(P,\pi,E)\,. 
\end{align*}

\smallskip
{\rm (iii)} Finally, we have
\begin{align*}
\Theta(\varphi (P,\pi,E))=\Theta(P,\pi\varphi^{-1},\lexp{\varphi}E)=m_{P,\pi\varphi^{-1},\lexp{\varphi}E}= m_{P,\pi,E} \varphi = \Theta(P,\pi,E)\varphi\,.
\end{align*}
\end{proof}

\begin{nothing}\label{noth permutationsigma}
Let $\Omega = [\calZ / \widehat{\langle u\rangle}]$ be a set of representatives of the right orbits of $\widehat{\langle u\rangle}$ on $\calZ$. As $\calZ$ is an $(S,\widehat{\langle u\rangle})$-biset, we can choose $\Omega$ to be left invariant by the action of $S$. Then, since $G\times L\langle u\rangle$ is a normal subgroup of $S$, the set $\Sigma=(G\times L\langle u\rangle)\backslash \Omega$ is a left $\Aut(L,u)$-set.  Now by Lemma \ref{lem propofTheta}{\rm (i)}, the map $\Theta$ induces a map
\begin{align*}
\overline{\Theta}: \FF\Sigma \to b\FFTD(G,L\langle u\rangle)\widetilde{F^{L\langle u\rangle}_{L,u}}
\end{align*}
sending the orbit $(G\times L\langle u\rangle) (P,\pi,E)$, for $(P,\pi,E)\in\Omega$, to $m_{P,\pi,E}$. 

\smallskip
{\rm (a)} Every element in $b\FFTD(G,L\langle u\rangle)\widetilde{F^{L\langle u\rangle}_{L,u}}$ is a linear combination of the elements $m_{P,\pi,E}$. Moreover, if $(P,\pi,E)$ and $(P',\pi',E')$ are in the same $((G\times L\langle u\rangle)\times \widehat{\langle u\rangle})$-orbit, then the elements $m_{P,\pi,E}$ and $m_{P',\pi',E'}$ differ by a constant. Therefore, the map $\overline{\Theta}$ is surjective. 

Now assume that $\alpha_1 m_{P_1,\pi_1,E_1} +\cdots +\alpha_l m_{P_l,\pi_l,E_l} =0$ for pairwise distinct elements $(P_i,\pi_i,E_i)$ of $\Sigma$ and $\alpha_i\in\FF$.  Fix $i\in\{1,\cdots,l\}$.  Then the $(P_i,\pi_i)$-component of the image of the sum $\alpha_1 m_{P_1,\pi_1,E_1} +\cdots +\alpha_l m_{P_l,\pi_l,E_l}$ under the isomorphism in Theorem \ref{thm isomprojective} is also zero.  This component is equal to
\begin{equation}\label{eqn image sum}
\sum_j \alpha_j S_{E_j}
\end{equation}
up to a non-zero scalar, where the sum runs over $j\in \{1,\cdots, l\}$ with the property that $(P_j,\pi_j)=(P_i,\pi_i)$. Now if $\alpha_i$ is non-zero, then since the sum in (\ref{eqn image sum}) is zero there exists an index $j\neq i$ such that $E_i = (E_j)_\lambda$ for some $\lambda\in\widehat{\langle u\rangle}$. But this implies that $(P_i,\pi_i,E_i)$ and $(P_j,\pi_j,E_j)$ are in the same $(G\times L\langle u\rangle)\times \widehat{\langle u\rangle}$-orbit. This is a contradiction.  Therefore we conclude that the map $\overline{\Theta}$ is injective and hence an isomorphism.

\smallskip
{\rm (b)} By Lemma \ref{lem propofTheta}{\rm (iii)}, we have $\overline{\Theta}(\varphi f)=\overline{\Theta}(f)\varphi^{-1}$ for any $\varphi\in\Aut(L,u)$ and $f\in \FF\Sigma$.  Together with Part{\rm (a)}, this implies that the map $\overline{\Theta}:\FF\Sigma\to b\FFTD(G,L\langle u\rangle)\widetilde{F^{L\langle u\rangle}_{L,u}}$ is an isomorphism of right $\FF\Aut(L,u)$-modules. 
\end{nothing}

\begin{nothing}
Let $\calY=\calY(G,L,u)$ be the set of triples $(P,\pi,F)$ where 
\begin{itemize}
\item $P$ is a $p$-subgroup of $G$.
\item $\pi:L\to P$ is a group isomorphism such that there exists a $p'$-element $s\in G$ with $\pi(\lexp{u}l)=\lexp{s}\pi(l)$ for all $l\in L$.  
\item $F$ is an $u$-invariant projective indecomposable $k\Br_P(b)C_G(P)$-module.
\end{itemize}

\smallskip
{\rm (a)} The group $G\times L\langle u\rangle$ acts on $\calY$ by
\begin{align*}
(g,t)\cdot (P,\pi,F):=(\lexp{g}P, i_g\pi i_{t^{-1}}, \lexp{g}F)
\end{align*}
for $(g,t)\in G\times L\langle u\rangle$ and $(P,\pi,F)\in\calY$. Here $\lexp{g}F$ is the $kC_G(\lexp{g}P)$-module equal to $F$ as a $k$-vector space and on which $c\in C_G(\lexp{g}P)$ acts by
\begin{align*}
c\cdot \lexp{g}f:= c^g f \,.
\end{align*}
To show that this action is well-defined, we only need to show that $\lexp{g}F$ is $u$-invariant projective indecomposable $k\Br_{\lexp{g}P}(b)C_G(\lexp{g}P)$-module.  Since $F$ is projective indecomposable, it follows that $\lexp{g}F$ is also projective indecomposable.  Moreover, by Theorem~\ref{thm cliffordthm} the module $F$ extends to a projective indecomposable $k\overline{N}_{P,\pi}$-module $E$. By \ref{thesetZ}{\rm (b)} the module $\lexp{(g,t)}E$ is a projective indecomposable $k\overline{N}_{\lexp{g}P,i_g\pi i_{t^{-1}}}$-module whose restriction to $C_G(\lexp{g}P)$ is indecomposable.  But the restriction of $\lexp{(g,t)}E$ to $C_G(\lexp{g}P)$ is $\lexp{g}F$ and hence it is $u$-invariant. 

\smallskip
{\rm (b)} The group $\Aut(L,u)$ acts on $\calY$ by
\begin{align*}
\varphi\cdot (P,\pi,F):=(P,\pi\varphi^{-1}, F)
\end{align*}
for $\varphi\in\Aut(L,u)$ and $(P,\pi,F)\in\calY$.  The well-definedness of this action is proved similar to Part{\rm(a)}, using \ref{thesetZ}{\rm(c)}. 

\smallskip
{\rm (c)}  Let $\left((g,t),\varphi\right)\in S=(G\times L\langle u\rangle)\rtimes \Aut(L,u)$ and $(P,\pi,E)\in\calZ$.  Then, as before, for any $l\in L$, we have
\begin{align*}
(i_g\pi i_{t^{-1}} \varphi^{-1})(l)=\left(i_g\pi\varphi^{-1}i_{\varphi(t^{-1})} \right)(l)\,.
\end{align*}
Moreover, for $c\in C_G(\lexp{g}P)$ and $f\in F$, we have
\begin{align*}
c\cdot \lexp{\varphi}{\left(\lexp{(g,t)} f \right)} & = c^g f = c\cdot \lexp{(g,\varphi(t))}{\left(\lexp{\varphi} f \right)}\,.
\end{align*}
These show that
\begin{align*}
\varphi\cdot\left((g,t)\cdot (P,\pi,F)\right) = (g,\varphi(t))\cdot \left(\varphi\cdot (P,\pi,F)\right)\,.
\end{align*}
Therefore the group $S$ acts on $\calY$.
\end{nothing}

\begin{nothing}

\smallskip
{\rm (a)} We view $\calY$ as an $(S,\widehat{\langle u\rangle})$-biset with trivial right action.  Consider the map
\begin{align*}
\Psi:\calZ&\to\calY\\
(P,\pi,E)&\mapsto (P,\pi,\Res^{\overline{N}_{P,\pi}}_{C_G(P)}E) \,.
\end{align*}
Note that the map is well-defined.  Now let $\left((g,t),\varphi\right)\in S$, $\lambda\in\widehat{\langle u\rangle}$ and $(P,\pi,E)\in\calZ$. Then we have
\begin{align*}
\Psi\left( \left(\left(g,t\right),\varphi\right)\cdot \left(P,\pi,E\right)\right)&=\Psi\left(\lexp{g}P, i_g\pi\varphi^{-1}i_{t^{-1}}, \lexp{(g,t)}{(\lexp{\varphi}E)} \right)\\&
=\left(\lexp{g}P, i_g\pi\varphi^{-1}i_{t^{-1}}, \Res^{\overline{N}_{\lexp{g}P, i_g\pi\varphi^{-1}i_{t^{-1}}}}_{C_G(\lexp{g}P)}\left( \lexp{(g,t)}{(\lexp{\varphi}E)}\right)\right)\\&
=\left(\lexp{g}P, i_g\pi\varphi^{-1}i_{t^{-1}},  \lexp{(g,t)}{\left( \Res^{\overline{N}_{P, \pi\varphi^{-1}}}_{C_G(P)} (\lexp{\varphi}E)\right)}\right)\\&
=\left((g,t),\varphi \right)\cdot \left(P,\pi,  \Res^{\overline{N}_{P,\pi}}_{C_G(P)} E \right)\\&
=\left((g,t),\varphi \right)\cdot \Psi(P,\pi, E)\,.
\end{align*}
Hence $\Psi$ is a map of $S$-sets.  Moreover,
\begin{align*}
\Psi\left((P,\pi,E)\cdot \lambda\right)&=\Psi(P,\pi,E_\lambda)=(P,\pi,\Res^{\overline{N}_{P,\pi}}_{C_G(P)} E_\lambda)=(P,\pi,\Res^{\overline{N}_{P,\pi}}_{C_G(P)})=\Psi\left(P,\pi,E\right)\\&=\Psi\left(P,\pi,E\right)\cdot \lambda\,.
\end{align*}
Hence $\Psi$ is also a map of right $\widehat{\langle u\rangle}$-sets and therefore a map of $(S,\widehat{\langle u\rangle})$-bisets. 

\smallskip
{\rm (b)} The map $\Psi$ induces a map $\overline{\Psi}:\Omega = [\calZ /\widehat{\langle u\rangle}]\to \calY$ of $S$-sets. The map $\overline{\Psi}$ is an isomorphism by Theorem~\ref{thm cliffordthm}. Indeed,  given $(P,\pi)\in \calP(G,L,u)$, any $\langle u\rangle$-invariant projective indecomposable $kC_G(P)$-module $F$ extends to a projective indecomposable $k\overline{N}_{P,\pi}$-module $E$ and any such extension differ by an element $\lambda$ of $\widehat{\langle u\rangle}$. 

Now $\overline{\Psi}$ induces an isomorphism of left $\Aut(L,u)$-sets
\begin{align*}
\psi: \left(G\times L\langle u\rangle\right)\setminus \Omega \to \Xi:= \left(G\times L\langle u\rangle\right)\setminus \calY
\end{align*}
and hence \ref{noth permutationsigma}{\rm (b)} implies that
\begin{align}\label{eqn permxi}
b\FFTD(G,L\langle u\rangle)\widetilde{F^{L\langle u\rangle}_{L,u}}\cong \FF\Xi
\end{align}
as right $\FF\Aut(L,u)$-modules where now $\Aut(L,u)$ acts on the right on $\Xi$. 

\smallskip
{\rm (c)} Let $\calU$ be a set of representatives of $\Aut(L,u)$-orbits on $\Xi$. For $(P,\pi,F)\in\calY$, we write $\overline{(P,\pi,F)}:= \left(G\times L\langle u\rangle\right) (P,\pi,F)$. Also for $\overline{(P,\pi,F)}\in\Xi$, we denote by $\Aut(L,u)_{\overline{(P,\pi,F)}}$ the stabilizer of $\overline{(P,\pi,F)}$ in $\Aut(L,u)$.  The isomorphism in (\ref{eqn permxi}) can be written as 
\begin{align}\label{eqn isomsigmainduction}
b\FFTD(G,L\langle u\rangle)\widetilde{F^{L\langle u\rangle}_{L,u}}\cong \bigoplus_{\overline{(P,\pi,F)}\in\calU} \Ind_{\Aut(L,u)_{\overline{(P,\pi,F)}}}^{\Aut(L,u)} \FF\,.
\end{align}
Note that $\overline{(P,\pi,F)}$ and $\overline{(P',\pi',F')}$ lie in the same $\Aut(L,u)$-orbit if and only if there exist $\varphi\in\Aut(L,u)$ and $(g,t)\in G\times L\langle u\rangle$ such that 
\begin{align*}
(P',\pi'\varphi,F')=(\lexp{g}P,i_g\pi i_{t^{-1}}, \lexp{g}F)\,.
\end{align*}
Furthermore, the element $\varphi\in\Aut(L,u)$ belongs to $\Aut(L,u)_{\overline{(P,\pi,F)}}$ if and only if there exists $(g,t)\in G\times L\langle u\rangle$ such that 
\begin{align*}
(P,\pi\varphi,F)=(\lexp{g}P,i_g\pi i_{t^{-1}}, \lexp{g}F)\,,
\end{align*}
i.e., $g\in N_G(P)$, $\varphi\pi=i_g\pi i_{t^{-1}}$ and $\lexp{g}F=F$. 
\end{nothing}

\begin{nothing}\label{noth isomwithD}
{\rm (a)} Let $\calV$ be a set of representatives of $\Aut(L,u)$-orbits on $(G\times L\langle u\rangle)\setminus \calP(G,L,u)$. For $(P,\pi)\in \calP(G,L,u)$, let $\overline{(P,\pi)}$ denote the orbit $(G\times L\langle u\rangle)(P,\pi)$ and let $\Aut(L,u)_{\overline{(P,\pi)}}$ denote its stabilizer in $\Aut(L,u)$. Then for $(P,\pi,F)\in\calY$, the stabilizer $\Aut(L,u)_{\overline{(P,\pi,F)}}$ is a subgroup $\Aut(L,u)_{\overline{(P,\pi)}}$, and we can rewrite (\ref{eqn isomsigmainduction}) as
\begin{align}
b\FFTD(G,L\langle u\rangle)\widetilde{F^{L\langle u\rangle}_{L,u}}\cong \bigoplus_{\overline{(P,\pi)}\in\calV} \Ind_{\Aut(L,u)_{\overline{(P,\pi)}}}^{\Aut(L,u)} \left( \bigoplus_{F\in \calW_{\overline{(P,\pi)}}} \Ind_{\Aut(L,u)_{\overline{(P,\pi,F)}}}^{\Aut(L,u)_{\overline{(P,\pi)}}} \FF\right)\,,
\end{align}
where $\calW_{\overline{(P,\pi)}}$ is a set of representatives of $\Aut(L,u)_{\overline{(P,\pi)}}$-orbits on the set of $u$-invariant projective indecomposable $k\Br_P(b)C_G(P)$-modules. Note that we have
\begin{align*}
\bigoplus_{F\in \calW_{\overline{(P,\pi)}}} \Ind_{\Aut(L,u)_{\overline{(P,\pi,F)}}}^{\Aut(L,u)_{\overline{(P,\pi)}}} \FF \cong \FF\Proj(k\Br_P(b)C_G(P),u)
\end{align*}
as right $\FF\Aut(L,u)_{\overline{(P,\pi)}}$-modules where the action of $\Aut(L,u)_{\overline{(P,\pi)}}$ is given as follows:
Let $F\in \calW_{\overline{(P,\pi)}}$ and let $\varphi\in \Aut(L,u)_{\overline{(P,\pi)}}$. Then there exists $(x,t)\in G\times L\langle u\rangle$ such that 
\begin{align*}
(\lexp{x}P,i_x\pi i_{t^{-1}})=(P,\pi\varphi)\,.
\end{align*}
This is equivalent to the existence of an element $g\in G$ such that 
\begin{align*}
(\lexp{g}P,i_g\pi)=(P,\pi\varphi)\,.
\end{align*}
Indeed, since $\pi\in\calP(G,L,u)$, there exists $s\in G$ such that $\pi(\lexp{u}l)=\lexp{s}\pi(l)$ for all $l\in L$. Now if $t=l_0\cdot u^i$, then $g:=xs^{-i}\pi(l_0^{-1})$ satisfies the desired equation. Moreover, one can show that the element $g$ is well-defined up to multiplication by an element of $C_G(P)$. Now since we have
\begin{align*}
(P,\pi,F)\cdot\varphi = (P,\pi\varphi,F)=(\lexp{g}P,i_g\pi, F)= g\cdot (P,\pi, \lexp{g^{-1}}F)\,,
\end{align*}
the element $\varphi$ maps $F$ to $\lexp{g^{-1}}F$.  

\smallskip
{\rm (b)} All these imply that we have an isomorphism 
\begin{align}\label{eqn isomwithV}
b\FFTD(G,L\langle u\rangle)\widetilde{F^{L\langle u\rangle}_{L,u}}\cong \bigoplus_{\overline{(P,\pi)}\in\calV} \Ind_{\Aut(L,u)_{\overline{(P,\pi)}}}^{\Aut(L,u)} \FF\Proj(k\Br_P(b)C_G(P),u) \,,
\end{align}
of right $\FF\Aut(L,u)$-modules. Arguments in Part{\rm (a)} implies that the orbits of $G$ and of $G\times L\langle u\rangle$ on the set $\calP(G,L,u)$ are the same.  Hence we have a bijection
\begin{align*}
\calD:=[G\setminus \calP(G,L,u)/ \Aut(L,u)]\to \calV=[(G\times L\langle u\rangle)\setminus \calP(G,L,u)/ \Aut(L,u)]
\end{align*}
and the isomorphism in (\ref{eqn isomwithV}) can be written as 
\begin{align}\label{eqn isomwithD}
b\FFTD(G,L\langle u\rangle)\widetilde{F^{L\langle u\rangle}_{L,u}}\cong \bigoplus_{\overline{(P,\pi)}\in\calD} \Ind_{\Aut(L,u)_{\overline{(P,\pi)}}}^{\Aut(L,u)} \FF\Proj(k\Br_P(b)C_G(P),u) \,.
\end{align}
\end{nothing}

\begin{nothing}
Let $\hat{\calF}_b$ denote the category whose objects are $b$-Brauer pairs $(P,e)$ and whose morphisms from $(P,e)$ to $(Q,f)$ are group homomorphisms $\psi: P\to Q$ for which there exists $g\in G$ such that $\psi(x)=\lexp{g}x$ for any $x\in P$ and such that $\lexp{g}{(P,e)}\le (Q,f)$ (see, for instance, \cite[Section~6.3]{linckelmann2018} for more details on Brauer pairs). 

Let $\hat{\calP}_b(G,L,u)$ be the set of triples $(P,e,\pi)$ where
\begin{itemize}
\item $(P,e)\in\hat{\calF}_b$
\item $\pi:L\to P$ is a group isomorphism such that $\pi i_u \pi^{-1} \in \Aut_{\hat{\calF}_b}(P,e)$.
\end{itemize}

\smallskip
{\rm (a)} The set $\hat{\calP}_b(G,L,u)$ is a $(G,\Aut(L,u))$-biset via
\begin{align*}
g\cdot (P,e,\pi)\cdot\varphi = (\lexp{g}P,\lexp{g}e, i_g\pi\varphi)
\end{align*}
for $g\in G$, $(P,e,\pi)\in\hat{\calP}_b(G,L,u)$ and $\varphi\in\Aut(L,u)$.  Indeed, we have $(\lexp{g}P,\lexp{g}e)\in\hat{\calP}_b(G,L,u)$ and one shows that
\begin{align*}
i_g\pi\varphi i_u \varphi^{-1}\pi^{-1} i_{g^{-1}} = i_{s'}
\end{align*}
where $s'=g\pi(l_0)s\in N_G(\lexp{g}P,\lexp{g}e)$, $l_0\in L$ with $\pi(u)=l_0\cdot u$ and $s\in N_G(P,e)$ such that $\pi i_u\pi^{-1}=i_s$. 

\smallskip
{\rm (b)} Let $\overline{(P,\pi)} \in \calD$. We have 
\begin{align*}
\FF\Proj(k\Br_P(b)C_G(P),u)\cong \bigoplus_{\substack{e\in\mathrm{Bl}(C_G(P))\\ \Br_P(b)e=e}} \FF\Proj(keC_G(P),u)
\end{align*}
as right $\FF\Aut(L,u)_{\overline{(P,\pi)}}$-modules.  Note that the group $\Aut(L,u)_{\overline{(P,\pi)}}$ permutes the summands on the right-hand side as follows: Let $\varphi\in \Aut(L,u)_{\overline{(P,\pi)}}$ and let $e\in \mathrm{Bl}(C_G(P))$ with $\Br_P(b)e=e$. Then there exists $g\in N_G(P)$ such that $i_g\pi=\pi \varphi$ and by \ref{noth isomwithD}(a), $\varphi$ sends an $u$-invariant projective indecomposable $keC_G(P)$-module $S$ to $\lexp{g^{-1}}S$.  Hence $\varphi$ maps the $b$-Brauer pair $(P,e)$ to $(P,\lexp{g^{-1}}e)$. 

Let $\Bl(C_G(P),u)$ denote the set of block idempotents $e$ of $C_G(P)$ for which there exists an indecomposable projective $u$-invariant $keC_G(P)$-module. Let $[\Bl(C_G(P),u)]$ denote a set of representatives of $\Aut(L,u)_{\overline{(P,\pi)}}$-orbits of $\Bl(C_G(P),u)$. Then as right $\Aut(L,u)_{\overline{(P,\pi)}}$-modules, we have
\begin{align*}
\FF\Proj(k\Br_P(b)C_G(P),u)\cong \bigoplus_{e\in [\Bl(C_G(P),u)]} \Ind_{\Aut(L,u)_{\overline{(P,e,\pi)}}}^{\Aut(L,u)} \FF\Proj(keC_G(P),u)
\end{align*}
where $\Aut(L,u)_{\overline{(P,e,\pi)}}$ is the stabilizer in $\Aut(L,u)_{\overline{(P,\pi)}}$ of $e$. 

Note that $\varphi\in\Aut(L,u)_{\overline{(P,\pi)}}$ fixes $e$ if and only if there exists $g\in N_G(P)$ with $i_g\pi=\pi \varphi$ and $\lexp{g}e=e$.  Hence $\Aut(L,u)_{\overline{(P,e,\pi)}}$ is equal to the stabilizer $\Aut(L,u)_{P,e,\pi}$ of the $G$-orbit of $(P,e)$ in $\Aut(L,u)$.  These imply that we have 
\begin{align}\label{eqn isomwithgeneralizedfusion}
b\FFTD(G,L\langle u\rangle)\widetilde{F^{L\langle u\rangle}_{L,u}}\cong \bigoplus_{(P,e,\pi)\in [G \backslash \hat{\calP}_b(G,L,u) / \Aut(L,u) ]} \Ind_{\Aut(L,u)_{P,e,\pi}}^{\Aut(L,u)}  \FF\Proj(keC_G(P),u)
\end{align}
as right $\FF\Aut(L,u)$-modules. 
\end{nothing}

\begin{nothing}\label{noth isomwithfusion}
Let $\calF_b$ denote the fusion system of $kGb$ with respect to a maximal $b$-Brauer pair $(D,e_D)$.  For each subgroup $P\le D$ let $e_P$ denote the unique block of $kC_G(P)$ with $(P,e_P)\le (D,e_D)$.  Note that every $G$-orbit in $\hat{\calF}_b$ contains an element in $\calF_b$. 

For $(P,e_P)\in \calF_b$, let $\calP_{(P,e_P)}(L,u)$ denote the set of group isomorphisms $\pi:L\to P$ with $\pi i_u \pi^{-1}\in \Aut_{\calF_b}(P,e_P)$.  The set $\calP_{(P,e_P)}(L,u)$ is an $(N_G(P,e_P), \Aut(L,u))$-biset via
\begin{align*}
g\cdot \pi\cdot\varphi = i_g\pi\varphi
\end{align*}
for $g\in N_G(P,e_P)$, $\pi\in \calP_{(P,e_P)}(L,u)$ and $\varphi\in\Aut(L,u)$. Let $[\calP_{(P,e_P)}(L,u)]$ denote a set of representatives of $N_G(P,e_P)\times \Aut(L,u)$-orbits of $\calP_{(P,e_P)}(L,u)$.  Then the isomorphism in (\ref{eqn isomwithgeneralizedfusion}) can be written as 
\begin{align}\label{eqn isomwithfusion}
b\FFTD(G,L\langle u\rangle)\widetilde{F^{L\langle u\rangle}_{L,u}}\cong \bigoplus_{(P,e_P) \in [\calF_b]} \bigoplus_{\pi \in [\calP_{(P,e_P)}(L,u)]} \Ind_{\Aut(L,u)_{P,e,\pi}}^{\Aut(L,u)}  \FF\Proj(keC_G(P),u)\,.
\end{align}
\end{nothing}

\begin{nothing}\label{noth isomwithlocalpoints}
Let $\calL_b(G,L,u)$ denote the set of pairs $(P_\gamma, \pi)$ where
\begin{itemize}
\item $P_\gamma$ is a local pointed point group on $kGb$,
\item $\pi:L\to P$ is a group isomorphism such that $\pi i_u \pi^{-1} = \Res(i_s)$ for some $s\in N_G(P_\gamma)$. 
\end{itemize}
The set $\calL_b(G,L,u)$ is a $(G, \Aut(L,u))$-biset via
\begin{align*}
g\cdot (P_\gamma, \pi)\cdot\varphi = (\lexp{g}P_{\lexp{g}\gamma}, i_g\pi\varphi)
\end{align*}
for $g\in G$ and $\varphi\in\Aut(L,u)$.  For $(P_\gamma, \pi)\in \calL_b(G,L,u)$, we write $\Aut(L,u)_{(P_\gamma,\pi)}$ for the stabilizer of the $G$-orbit of $(P_\gamma, \pi)$ in $\Aut(L,u)$.

Let $\overline{(P,\pi)}\in\calD$. A projective indecomposable $k\Br_P(b)C_G(P)$-module $S$ determines a conjugacy class of a primitive idempotent of $k\Br_P(b)C_G(P)$ and hence via the Brauer morphism
\begin{align*}
\Br_P: (kGb)^P\to kC_G(P)
\end{align*}
a local point $\gamma$ of $P$ on $kGb$. This in fact induces a bijection between the sets of local points of $P$ on $kGb$ and projective indecomposable $k\Br_P(b)C_G(P)$-modules. See for instance \cite[Corollary 37.6]{Thevenaz1995} for more details.  This bijection induces an isomorphism of right $\Aut(L,u)_{\overline{(P,\pi)}}$-modules
\begin{align*}
\FF\Proj(k\Br_P(b)C_G(P),u)\cong \FF \calL_{(P,\pi)}
\end{align*}
where $\calL_{(P,\pi)}$ is the set of local points $P_\gamma$ with $\pi i_u\pi^{-1} = i_s$ for some $s\in N_G(P_\gamma)$.  The action of $\varphi\in\Aut(L,u)_{\overline{(P,\pi)}}$ on $\FF \calL_{(P,\pi)}$ is given as follows: There exists $g\in G$ such that $(\lexp{g}P, i_g\pi)=(P,\pi\varphi)$. Then $\varphi$ maps $P_\gamma\in \calL_{(P,\pi)}$ to $P_{\lexp{g^{-1}}\gamma}$.  Hence the isomorphism in (\ref{eqn isomwithD}) implies that 
\begin{align*}
b\FFTD(G,L\langle u\rangle)\widetilde{F^{L\langle u\rangle}_{L,u}}\cong \bigoplus_{\overline{(P,\pi)}\in\calD} \Ind_{\Aut(L,u)_{\overline{(P,\pi)}}}^{\Aut(L,u)} \left(\bigoplus_{P_\gamma\in [\calL_{(P,\pi)} / \Aut(L,u)_{\overline{(P,\pi)}}]} \Ind^{\Aut(L,u)_{\overline{(P,\pi)}}}_{\Aut(L,u)_{\overline{(P,\pi)}_\gamma}} \FF \right)
\end{align*}
as right $\FF\Aut(L,u)$-modules where $\Aut(L,u)_{\overline{(P,\pi)}_\gamma}$ is the stabilizer of $P_\gamma$ in $\Aut(L,u)_{\overline{(P,\pi)}}$.  But one shows that $\Aut(L,u)_{\overline{(P,\pi)}_\gamma}$ is equal to $\Aut(L,u)_{(P_\gamma,\pi)}$ and it follows that
\begin{align}\label{eqn isomwithlocalpoints}
b\FFTD(G,L\langle u\rangle)\widetilde{F^{L\langle u\rangle}_{L,u}}\cong \bigoplus_{(P_\gamma,\pi)\in [G \backslash \calL_b(G,L,u) / \Aut(L,u) ]} \Ind_{\Aut(L,u)_{(P_\gamma,\pi)}}^{\Aut(L,u)}  \FF
\end{align}
as right $\FF\Aut(L,u)$-modules. 
\end{nothing}

\begin{theorem}\label{thm multiplicityformulas}
Let $S_{L,u,V}$ be a simple diagonal $p$-permutation functor and let $b$ be a block idempotent of $kG$.  The multiplicity of $S_{L,u,V}$ in $\FFTD_{G,b}$ is equal to the $\FF$-dimensions of any of the following vector spaces.

\smallskip
{\rm (a)} $\bigoplus_{\overline{(P,\pi,F)}\in\calU} V^{\Aut(L,u)_{\overline{(P,\pi,F)}}}$

\smallskip
{\rm (b)} $\bigoplus_{(P,e_P) \in [\calF_b]} \bigoplus_{\pi \in [\calP_{(P,e_P)}(L,u)]} \FF\Proj(keC_G(P),u)\otimes_{\Aut(L,u)_{P,e,\pi}} V$

\smallskip
{\rm (c)} $\bigoplus_{(P_\gamma,\pi)\in [G \backslash \calL_b(G,L,u) / \Aut(L,u) ]} V^{\Aut(L,u)_{(P_\gamma,\pi)}}$
\end{theorem}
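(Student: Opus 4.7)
The starting point, from \ref{noth multiplicityofsimple}, is that the multiplicity of $S_{L,u,V}$ in $\FFTD_{G,b}$ equals $\dim_{\FF}\mathrm{Mult}(G,b,L,u,V) = \dim_\FF (M\cdot e_V)$, where
\[ M := b\FFTD(G,L\langle u\rangle)\widetilde{F^{L\langle u\rangle}_{L,u}}, \]
and $e_V$ is a primitive idempotent of $\FF\Out(L,u)$, identified via Corollary \ref{cor algebraisom} with an element of the endomorphism algebra $\widetilde{F^{L\langle u\rangle}_{L,u}}\FFTD(L\langle u\rangle,L\langle u\rangle)\widetilde{F^{L\langle u\rangle}_{L,u}}$, chosen so that $V \cong \FF\Out(L,u)e_V$ as left modules. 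Because the right action of this algebra on $M$ factors through the quotient $\FF\Out(L,u)$ (inner automorphisms of $L\langle u\rangle$ acting trivially), there is a canonical identification
\[ M\cdot e_V \;\cong\; M \otimes_{\FF\Out(L,u)} V. \]

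The plan is then to substitute, in turn, each of the three decompositions of $M$ already established as isomorphisms of right $\FF\Aut(L,u)$-modules in Section \ref{sec Blocksasfunctors}, namely (\ref{eqn isomsigmainduction}), (\ref{eqn isomwithfusion}), and (\ref{eqn isomwithlocalpoints}), and to simplify the resulting tensor product. For formulas (a) and (c), the summands of $M$ are induced modules of the form $\Ind_H^{\Aut(L,u)}\FF$ with trivial coefficients, where $H$ ranges over the stabilizers $\Aut(L,u)_{\overline{(P,\pi,F)}}$ and $\Aut(L,u)_{(P_\gamma,\pi)}$ respectively. Applying the standard identity $(\Ind_H^{\Aut(L,u)}\FF)\otimes_{\FF\Out(L,u)}V \cong \FF \otimes_{\FF H} V = V_H$, together with the isomorphism $V_H \cong V^H$ valid for finite groups in characteristic zero (the norm endomorphism being invertible), immediately yields the invariant-space expressions on the right-hand sides of (a) and (c). For formula (b), the summands are of the form $\Ind_H^{\Aut(L,u)}\FF\Proj(keC_G(P),u)$ with $H = \Aut(L,u)_{P,e,\pi}$, and the analogous identity $(\Ind_H^{\Aut(L,u)} W)\otimes_{\FF\Out(L,u)}V \cong W\otimes_{\FF H} V$ produces the tensor-product expression in (b) directly.

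Since the difficult work has already been done in establishing the three decompositions of $M$ in Section \ref{sec Blocksasfunctors} (which rely on the Brauer-quotient analysis of Theorem \ref{thm isomprojective} and the Clifford-theoretic Theorem \ref{thm cliffordthm}), the proof of the present theorem is essentially a formal manipulation and no genuine obstacle arises. The only subtlety worth spelling out is the verification that the right $\FF\Aut(L,u)$-module structures appearing in (\ref{eqn isomsigmainduction})--(\ref{eqn isomwithlocalpoints}) are compatible with the tensor product over $\FF\Out(L,u)$ -- that is, that $\Inn(L\langle u\rangle)$ acts trivially on all summands -- but this is automatic since the right action on $M$ is defined throughout via the algebra isomorphism of Corollary \ref{cor algebraisom}. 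The three formulas (a), (b), (c) are thereby all equal to the multiplicity, and hence to one another.
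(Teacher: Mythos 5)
Your proposal is correct and takes essentially the same route as the paper: the paper's proof likewise reduces the multiplicity to $\dim_\FF\big(b\FFTD(G,L\langle u\rangle)\widetilde{F^{L\langle u\rangle}_{L,u}}e_V\big)$ via \ref{noth multiplicityofsimple} and then invokes the decompositions (\ref{eqn isomsigmainduction}), (\ref{eqn isomwithfusion}) and (\ref{eqn isomwithlocalpoints}). Your explicit manipulations with $\Ind_H^{\Aut(L,u)}$, the identification $Me_V\cong M\otimes_{\FF\Out(L,u)}V$, and coinvariants $\cong$ invariants in characteristic zero are exactly the formal steps the paper leaves implicit.
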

\begin{proof}
Recall from \ref{noth multiplicityofsimple} that the multiplicity of $S_{L,u,V}$ in $\FFTD_{G,b}$ is equal to the $\FF$-dimension of $b\FFTD(G,L\langle u\rangle)\widetilde{F^{L\langle u\rangle}_{L,u}} e_V$ where $e_V$ is an idempotent of $\FF\Out(L,u)$ such that $V\cong \FF\Out(L,u)e_V$.  Hence part {\rm (a)} follows from the isomorphism in (\ref{eqn isomsigmainduction}), part {\rm (b)} follows from the isomorphism in (\ref{eqn isomwithfusion}), and part {\rm (c)} follows from the isomorphism in (\ref{eqn isomwithlocalpoints}).
\end{proof}

\begin{corollary}\label{cor multiplicityoftrivial and defect}
{\rm (i)} The multiplicity of the simple functor $S_{1,1,\FF}$ in the functor $\FFTD_{G,b}$ is equal to the number of isomorphism classes of simple $kGb$-modules.

\smallskip
{\rm (ii)} The multiplicity of the simple functor $S_{L,u,V}$ at $\FFTD_{G,b}$ is zero unless $L$ is isomorphic to a subgroup of a defect group of $b$. 
\end{corollary}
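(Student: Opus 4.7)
My plan is to deduce both assertions directly from Theorem~\ref{thm multiplicityformulas}, which gives three equivalent expressions for the multiplicity of $S_{L,u,V}$ in $\FFTD_{G,b}$. The cleanest choice for both parts is formula~(c), using the set $\calL_b(G,L,u)$ of ``$u$-invariant'' local pointed subgroups, because its emptiness and its concrete description for small $(L,u)$ are both transparent.

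For part~(i), I would specialize to $(L,u,V)=(1,1,\FF)$. Then $\Aut(L,u)$ is trivial, so formula~(c) reduces to
\begin{align*}
\dim_{\FF}\bigl(b\FFTD(G,\mathbf 1)\widetilde{F^{\mathbf 1}_{\mathbf 1,1}}\cdot e_{\FF}\bigr)=\#\bigl[G\backslash \calL_b(G,1,1)\bigr].
\end{align*}
A pair $(P_\gamma,\pi)\in \calL_b(G,1,1)$ forces $P=1$, and the only isomorphism $\pi\colon 1\to 1$ is the identity; so the set reduces to the set of local points of the trivial subgroup of $G$ on $kGb$. By the definition of the Brauer map at $P=1$ (which is the identity of $kGb$), these local points are exactly the $G$-conjugacy classes of primitive idempotents of $kGb$, and the $G$-action on them is inner, hence trivial on conjugacy classes. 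Via the standard bijection between conjugacy classes of primitive idempotents of $kGb$ and isomorphism classes of indecomposable projective (equivalently, simple) $kGb$-modules, the multiplicity equals the number of isomorphism classes of simple $kGb$-modules.

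For part~(ii), I would again use formula~(c): it suffices to show that if $L$ is not isomorphic to a subgroup of a defect group of $b$, then the indexing set $G\backslash\calL_b(G,L,u)/\Aut(L,u)$ is empty. By definition a local pointed subgroup $P_\gamma$ on $kGb$ has $P$ a $p$-subgroup of $G$ admitting a local point on $kGb$; by the standard theory of pointed groups (see e.g.\ \cite[\S 18 and \S 37]{Thevenaz1995}), every local pointed subgroup on $kGb$ is contained, up to $G$-conjugation, in a defect pointed subgroup $D_\delta$, so in particular $P$ is $G$-conjugate to a subgroup of a defect group $D$ of $b$. Thus the existence of any triple $(P_\gamma,\pi)\in \calL_b(G,L,u)$ already requires $L\cong P$ to embed into $D$, and in the contrary case the set is empty, giving multiplicity zero.

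Neither step should present any serious obstacle: part~(i) is a routine combination of the known bijection between local points of the trivial subgroup and simple modules of $kGb$ with Theorem~\ref{thm multiplicityformulas}(c), and part~(ii) only requires quoting the basic fact that every local pointed subgroup on $kGb$ lies (up to conjugation) in a defect pointed subgroup. The one point to be slightly careful about is the correct description of $\Br_1$ and the corresponding identification of local points of $\mathbf 1$; beyond that, both statements follow essentially immediately.
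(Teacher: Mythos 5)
Your proposal is correct and follows the same route as the paper, whose proof of this corollary consists precisely of the remark that both statements follow immediately from Theorem~\ref{thm multiplicityformulas}; you have simply filled in the details via formula~(c), identifying the local points of the trivial subgroup on $kGb$ with conjugacy classes of primitive idempotents (hence with simple $kGb$-modules) for part~(i), and using containment of local pointed subgroups in defect pointed subgroups for part~(ii). (Part~(ii) is equally immediate from formula~(b), where the $p$-subgroups $P$ range over Brauer pairs contained in a maximal one, but your argument via~(c) is equally valid.)
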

\begin{proof}
Both statements follow immediately from Theorem~\ref{thm multiplicityformulas}.
\end{proof}

\section{Nilpotent blocks}\label{sec nilpotentblocks}

Nilpotent blocks were introduced by Brou{\'e} and Puig in \cite{BrouePuig1980}. In this section we give a characterization of nilpotent blocks in terms of diagonal $p$-permutation functors.  Let $G$ be a finite group.

\begin{definition}[\cite{BrouePuig1980}]
A block idempotent $b$ of $kG$ is called {\em nilpotent} if for any $b$-Brauer pair $(P,e)$ the quotient $N_G(P,e)/C_G(P)$ is a $p$-group.
\end{definition}

\begin{theorem}\label{thm nilpotentcharacterization}
Let $b$ be a block idempotent of $kG$ with a defect group $D$. The following are equivalent.

\smallskip
{\rm (i)} The block $b$ is nilpotent.

\smallskip
{\rm (ii)} If $S_{L,u,V}$ is a simple summand of $\FFTD_{G,b}$, then $u=1$. 

\smallskip
{\rm (iii)} The functor $\FFTD_{G,b}$ is isomorphic to the functor $\FFTD_D$.
\end{theorem}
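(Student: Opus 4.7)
The plan is to prove the cycle $(i) \Rightarrow (iii) \Rightarrow (ii) \Rightarrow (i)$; the first two implications are short, while the bulk of the work lies in the third. For $(i) \Rightarrow (iii)$, I would invoke Puig's structure theorem for nilpotent blocks: if $b$ is nilpotent with defect group $D$, then the source algebra $ikGi$ of $b$ is isomorphic to $kD$ as an interior $D$-algebra. Combined with the Remark at the start of Section~\ref{sec Blocksasfunctors}, which identifies $\FFTD_{G,b}$ with $\FFTD(-,ikGi)$, this immediately gives $\FFTD_{G,b} \cong \FFTD(-,kD) = \FFTD_D$. For $(iii) \Rightarrow (ii)$, semisimplicity of $\Fppk{\FF}$ (Corollary~\ref{Fppk semisimple}) combined with Yoneda--Schur shows that the multiplicity of a simple summand $S_{L,u,V}$ in $\FFTD_D$ equals $\dim_{\FF} S_{L,u,V}(D)$; by Corollary~\ref{evaluation simple} this is a sum of $\dim V^{N_D(P,s)}$ over $(P,s) \in [\calQ^{\Delta}_{D,p}]$ with $(\tilde P,\tilde s) \cong (L,u)$, and since $D$ is a $p$-group the only $p'$-element is $s = 1$, forcing $u = 1$ whenever the multiplicity is nonzero.

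\smallskip

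For $(ii) \Rightarrow (i)$, I would argue by contrapositive. Assume $b$ is not nilpotent, so there is a $b$-Brauer pair $(P, e_P)$ with $\Aut_{\calF_b}(P, e_P) = N_G(P, e_P)/C_G(P)$ not a $p$-group. Pick a nontrivial element of prime $p'$-order $q$ in this quotient and lift it to a $p'$-element $s \in N_G(P, e_P)$ via a Sylow $q$-subgroup of the preimage of the cyclic subgroup it generates. Setting $L = P$ and letting $u$ act on $L$ as $\phi = i_s|_P$ produces a $\DD$-pair with $u \ne 1$ (since $\phi$ is nontrivial of prime order). Applying Theorem~\ref{thm multiplicityformulas}(b), or equivalently the isomorphism~(\ref{eqn isomwithfusion}), one sees that the summand indexed by $(P, e_P) \in [\calF_b]$ and $\pi = \id$ (which lies in $\calP_{(P, e_P)}(L, u)$ because $\pi i_u \pi^{-1} = \phi \in \Aut_{\calF_b}(P, e_P)$) contributes $\FF\Proj(ke_P C_G(P), u)$ induced up to $\Aut(L,u)$, showing that the $\FF$-dimension of the $\FF\Aut(L,u)$-module $b\FFTD(G, L\langle u\rangle) \widetilde{F^{L\langle u\rangle}_{L,u}}$ is bounded below by $\dim_{\FF} \FF\Proj(ke_P C_G(P), u)$. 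Consequently, producing any nonzero element in this latter space makes the module nonzero, so at least one simple $\FF\Out(L,u)$-constituent $V$ appears with positive multiplicity in the decomposition of $\FFTD_{G,b}$, yielding a simple summand $S_{L,u,V}$ with $u \ne 1$ and contradicting (ii).

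\smallskip

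The main obstacle is therefore to show $\FF\Proj(ke_P C_G(P), u) \ne 0$, i.e., that the $p'$-automorphism of the block $e_P k C_G(P)$ induced by conjugation by $s$ admits an invariant projective indecomposable (equivalently, an invariant simple) module. By Brauer's permutation lemma this reduces to finding an $s$-stable $p$-regular conjugacy class of $C_G(P)$ on which the central character $\omega_{e_P}$ is nonzero; I would expect to verify this through a standard block-theoretic argument on $p'$-actions on blocks, possibly in combination with the Clifford-theoretic extension result of Theorem~\ref{thm cliffordthm} to pass from such an invariant simple $e_P k C_G(P)$-module back to a $u$-invariant projective indecomposable.
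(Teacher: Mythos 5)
Your reduction of (iii)$\Rightarrow$(ii) via semisimplicity, Yoneda and Corollary~\ref{evaluation simple} is fine, but both of the other implications have problems. For (i)$\Rightarrow$(iii), Puig's theorem does \emph{not} say that the source algebra of a nilpotent block is $kD$: it says it is isomorphic, as an interior $D$-algebra, to $\End_k(V)\otimes_k kD$, where $V$ is an endo-permutation $kD$-module (the source of the unique simple $kGb$-module), and $V$ is non-trivial in general. In that case the bimodule realizing the Morita equivalence between $ikGi$ and $kD$ is not a $p$-permutation bimodule, so the Remark identifying $\FFTD_{G,b}$ with $\FFTD(-,ikGi)$ does not let you replace $ikGi$ by $kD$; indeed, whether $\FFTD_{G,b}\cong\FFTD_D$ holds is exactly the non-trivial content of (i)$\Rightarrow$(iii), which the paper obtains from the explicit decomposition~(\ref{eqn isomwithfusion}) (using $\calF_b=\calF_D$ and that each $e_P$ is nilpotent, so $\FF\Proj(ke_PC_G(P),1)\cong\FF$), after first proving (i)$\Rightarrow$(ii).

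The more serious gap is in (ii)$\Rightarrow$(i). You fix one Brauer pair $(P,e_P)$ witnessing non-nilpotency and then need $\FF\Proj(ke_PC_G(P),u)\neq 0$, i.e.\ an $s$-invariant simple $ke_PC_G(P)$-module; but this can fail at the chosen pair, for \emph{every} admissible $p'$-element $s$. For instance, take $p=7$, $H=C_7\rtimes C_9$ (kernel of the action $=Z(H)\cong C_3$), and let $\alpha$ be the automorphism fixing $C_7$ and sending a generator $y$ of $C_9$ to $y^4$: then $\alpha$ has order $3$, stabilizes each non-principal block $f_\theta$ of $kH$ ($\theta$ a non-trivial character of $Z(H)$) and permutes its three simple modules freely. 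Put $G=(P\times H)\rtimes\langle s\rangle$ with $P=C_7$ and $s$ of order $3$ acting faithfully on $P$ and by $\alpha$ on $H$; then $C_G(P)=P\times H$, the block $e=1\otimes f_\theta$ is $s$-stable, so the block $b$ of $kG$ determined by $(P,e)$ is not nilpotent, yet every $p'$-element of $N_G(P,e)$ inducing a non-trivial automorphism of $P$ acts on the simple $keC_G(P)$-modules through a non-trivial power of $\alpha$, hence fixes none of them. Note also that $\langle s\rangle$ need not act coprimely on $C_G(P)$, and Brauer's permutation lemma counts fixed Brauer characters of the whole group, not of a single block, so the tools you invoke cannot force the non-vanishing even when it holds. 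The paper's proof avoids choosing a witness altogether: it reformulates (ii) as the statement that \emph{no} pair admits an invariant simple module, shows this condition passes to the blocks $e$ of the centralizers $C_G(P)$, concludes by induction on $|G|$ that those blocks are nilpotent and therefore have a \emph{unique} (hence automatically invariant) simple module, and only then deduces that every inertial quotient $N_G(P,e)/C_G(P)$ is a $p$-group. Some device of this kind — induction together with the uniqueness of the simple module of a nilpotent block, or a much more careful choice of pair — is what is missing from your sketch.
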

\begin{proof}
{\rm (i)} $\Rightarrow$ {\rm (ii)}: 
Suppose that $b$ is nilpotent and that $S_{L,u,V}$ is a simple summand of $\FFTD_{G,b}$.  Then by Theorem~\ref{thm multiplicityformulas}(b),  there exists a triple $(P,e,\pi)\in\hat{\calP}_b(G,L,u)$ such that
\begin{align*}
\FF \Proj(keC_G(P),u) \neq 0\,.
\end{align*}
Let $s\in N_G(P,e)$ be an element with the property $\pi i_u\pi^{-1}=i_s:P\to P$. By \ref{noth introductionofcalP}(d), there exists a $p'$-element $s'\in N_G(P,e)$ with $\pi i_u\pi^{-1}=i_{s'}$. Since the block idempotent $b$ is nilpotent, the quotient $N_G(P,e)/C_G(P)$ is a $p$-group. It follows that $s'\in C_G(P)$ and hence
\begin{align*}
\pi(\lexp{u}l)=\lexp{s'}\pi(l)=\pi(l)
\end{align*}
for any $l\in L$. Since $(L,u)$ is a $\DD$-pair, this means that $u=1$. Hence {\rm (i)} implies {\rm (ii)}.

\smallskip
{\rm (ii)} $\Rightarrow$ {\rm (i)}: 
Assume that {\rm (ii)} holds. Then for any $\DD$-pair $(L,u)$ with $u\neq 1$, by Theorem~\ref{thm multiplicityformulas}(a), the set $\calY(G,L,u)$ is empty.  This is equivalent to the following statement:

$(B)_{G,b}$: If $P$ is a $p$-subgroup of $G$ and if $s\in N_G(P)$ induces a non-trivial $p'$-automorphism of~$P$, then there is no $s$-invariant simple $k\Br_P(b)C_G(P)$-module.

Indeed, if $s\in N_G(P)$ induces a nontrivial $p'$-automorphism $u$ of $P$, setting $L=P$, $\pi=\id$, then $(L,u)$ is a $\DD$-pair and $(P,\pi)\in\calP(G,L,u)$. Hence if $S$ is an $s$-invariant simple $k\Br_P(b)C_G(P)$-module, then $(P,\pi,\hat{S})\in\calY(G,L,u)$ where $\hat{S}$ is a projective cover of $S$.  This contradicts our assumption.

Now we claim that the statement $(B)_{G,b}$ is equivalent to the following statement:

$(C)_{G,b}$: If $(P,e)$ is a $b$-Brauer pair and if $s\in N_G(P,e)$ induces a nontrivial $p'$-automorphism of $P$, then there is no $s$-invariant simple $kC_G(P)e$-module.

Indeed, $(B)_{G,b}\Rightarrow (C)_{G,b}$ is clear. Now assume that $(C)_{G,b}$ holds and suppose that $s\in N_G(P)$ induces a non-trivial $p'$-automorphism of $P$ and $S$ is an $s$-invariant simple $k\Br_P(b)C_G(P)$-module. Then $S$ belongs to a unique block $e$ of $kC_G(P)$. Since $S$ is $s$-invariant, it follows that $e$ is also $s$-invariant. This is a contradiction. 

We will prove that the statement $(C)_{G,b}$ implies that the block $b$ is nilpotent.  We use induction on the order of $G$. 

If $G$ is the trivial group, then the block $kG=k$ is obviously nilpotent. Now assume that the statement $(C)_{H,c}$ implies that the block $c$ is nilpotent whenever $c$ is a block idempotent of a group $H$ with $|H|< |G|$, and assume that the statement $(C)_{G,b}$ holds. 

Let $(P,e)$ be a $b$-Brauer pair. We will show that $N_G(P,e)/C_G(P)$ is a $p$-group.  Set $H=C_G(P)$.  If $H=G$, then the quotient $N_G(P,e)/C_G(P)$ is trivial hence a $p$-group.  

So we can assume that $|H|<|G|$. Let $(Q,f)$ be an $e$-Brauer pair of $H$. Then one can show that the pair $(QP,f)$ is a $b$-Brauer pair of $G$.  Let $s\in N_H(Q,f)$ be an element which induces a nontrivial $p'$-automorphism $u$ of $Q$. Then $s\in C_G(P)\cap N_G(Q,f)\subseteq N_G(QP,f)$ induces a nontrivial $p'$-automorphism of of $QP$. Since $(C)_{G,b}$ holds and since $(QP,f)$ is a $b$-Brauer pair, it follows that there is no $s$-invariant simple $kC_G(QP)f$-module. Therefore there is no $s$-invariant simple $kC_H(Q)f$-module.  This proves that the statement $(C)_{H,e}$ holds. Since $|H|<|G|$, by induction hypothesis, the block $e$ of $kH=kC_G(P)$ is nilpotent.  So there is a unique simple module $S$ of $kC_G(P)e$. Now if $t\in N_G(P,e)$ induces a nontrivial $p'$-automorphism $v$ of $P$, then $S$ is invariant by $t$ and hence $v=1$ since $(C)_{G,b}$ holds.  In other words $t\in C_G(P)$ and so the quotient $N_G(P,e)/C_G(P)$ is a $p$-group.  This shows that {\rm (ii)} implies {\rm (i)}.

\smallskip
{\rm (iii)} $\Rightarrow$ {\rm (ii)}: This is clear. 

\smallskip
{\rm (i)} $\Rightarrow$ {\rm (iii)}: Assume that $b$ is nilpotent. By the first part of the proof, if $S_{L,u,V}$ is a simple summand of $\FFTD_{G,b}$, then $u=1$.  So let $S_{L,1,V}$ be a simple functor. We will show that
\begin{align*}
b\FFTD(G,L)\widetilde{F^{L}_{L,1}} \cong \FFTD(D,L)\widetilde{F^{L}_{L,1}}
\end{align*}
as right $\FF\Aut(L)$-modules using the isomorphism in (\ref{eqn isomwithfusion}). Since $b$ is nilpotent, for any $b$-Brauer pair $(P,e_P)$, the block idempotent $e_P\in kC_G(P)$ is nilpotent.  So there is a unique simple $ke_PC_G(P)$-module, and hence $\FF\Proj(ke_PC_G(P),1)\cong \FF$.  Moreover, $\calF_b = \calF_D$, $\calP(P,e_P)(L,1)=\mathrm{Isom}(L,P)$ and $N_G(P,e_P)=N_D(P)$.  The result follows. 
\end{proof}

\section{Functorial equivalence of blocks}
In this section $G$ and $H$ denote finite groups. We come back to the case of an arbitrary commutative ring $R$ of coefficients.

\begin{definition}\label{functorially equivalent}
Let $b$ be a block idempotent of $kG$ and let $c$ be a block idempotent of~$kH$. We say that the pairs $(G,b)$ and $(H,c)$ are {\em functorially equivalent} over $R$, if the corresponding diagonal $p$-permutation functors $\RTD_{G,b}$ and $\RTD_{H,c}$ are isomorphic in $\Fppk{R}$. 
\end{definition}

\begin{lemma}
Let $(G,b)$ and $(H,c)$ be as in Definition~\ref{functorially equivalent}.

\smallskip
{\rm (i)} $(G,b)$ and $(H,c)$ are functorially equivalent over~$R$ if and only if there exists $\omega\in b\RTD(G,H)c$ and $\sigma\in c\RTD(H,G)b$ such that
\begin{align*}
\omega \cdot_G \sigma = [kGb] \quad \text{in} \quad b\RTD(G,G)b \quad \text{and} \quad \sigma \cdot_H \omega = [kHc] \quad \text{in} \quad c\RTD(H,H)c \,.
\end{align*}

\smallskip
{\rm (ii)} If $kGb$ and $kHc$ are $p$-permutation equivalent, then $(G,b)$ and $(H,c)$ are functorially equivalent over~$R$. 

\end{lemma}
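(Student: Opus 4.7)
The plan for part {\rm (i)} is a direct application of the Yoneda lemma, adapted to direct summands of representable functors. First I would observe that $\RTD_{G,b}$ is the image of the idempotent endomorphism of $\RTD_G$ obtained by pre-composition with $[kGb]\in \End_{Rpp_k^\Delta}(G)$, and likewise for $\RTD_{H,c}$. Generalizing Lemma~\ref{lem naturaltransformations}{\rm (iii)} from $\FF$ and the idempotents $\widetilde{F^G_{P,r}}$ to an arbitrary commutative ring $R$ and the block idempotents $[kGb]$, $[kHc]$, I would establish the natural $R$-linear isomorphism
\[
\Hom_{\Fppk{R}}(\RTD_{G,b}, \RTD_{H,c}) \cong b\RTD(G,H)c,
\]
sending $\omega$ to the natural transformation $\Phi_\omega$ that maps $f\in \RTD_{G,b}(L)$ to $f\otimes_{kG}\omega$.

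Under this correspondence, the identity natural transformation of $\RTD_{G,b}$ corresponds to $[kGb]$, and the composition $\Phi_\sigma\circ \Phi_\omega$ (for $\sigma\in c\RTD(H,G)b$) corresponds to $\omega\cdot_G\sigma=\omega\otimes_{kH}\sigma\in b\RTD(G,G)b$; symmetrically, $\Phi_\omega\circ \Phi_\sigma$ corresponds to $\sigma\cdot_H\omega =\sigma\otimes_{kG}\omega\in c\RTD(H,H)c$. Hence $\RTD_{G,b}$ and $\RTD_{H,c}$ are isomorphic in $\Fppk{R}$ if and only if such a mutually inverse pair $(\omega,\sigma)$ exists, proving {\rm (i)}.

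For part {\rm (ii)}, I would invoke the definition of $p$-permutation equivalence from \cite{BoltjePerepelitsky2020}: such an equivalence between $kGb$ and $kHc$ is by definition a virtual diagonal $p$-permutation $(kGb,kHc)$-bimodule $\gamma\in bT^\Delta(G,H)c$ whose ``dual'' $\gamma^*\in cT^\Delta(H,G)b$ satisfies $\gamma\cdot_G\gamma^*=[kGb]$ and $\gamma^*\cdot_H\gamma=[kHc]$. Extending scalars along $\ZZ\to R$ yields elements of $b\RTD(G,H)c$ and $c\RTD(H,G)b$ satisfying the conditions of~{\rm (i)}, and so $(G,b)$ and $(H,c)$ are functorially equivalent over $R$. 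The main technical obstacle is to verify the generalized Yoneda bijection in paragraph one with the correct left/right actions and composition convention (functor composition in $\Fppk{R}$ corresponds to the opposite composition in $Rpp_k^\Delta$), so that one must check that the bimodule product $\omega \otimes_{kH}\sigma$ really represents the composite endomorphism of $\RTD_{G,b}$; once this is in place, everything else is a routine unpacking.
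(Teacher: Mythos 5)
Your proof is correct and follows essentially the same route as the paper: part (i) is the Yoneda lemma applied to the direct summands of the representable functors cut out by the idempotents $[kGb]$ and $[kHc]$ (the analogue of Lemma~\ref{lem naturaltransformations} over an arbitrary $R$, which indeed needs no field or semisimplicity assumptions), and part (ii) follows by extending scalars from the defining equations of a $p$-permutation equivalence in \cite{BoltjePerepelitsky2020}. Your identification of the composite $\Phi_\sigma\circ\Phi_\omega$ with $\omega\otimes_{kH}\sigma$ acting by right multiplication, and hence of isomorphism with the two displayed equations, is exactly the intended argument.
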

\begin{proof}
The first statement follows from the Yoneda lemma. The second statement follows from the first one, and from the definition of $p$-permutation equivalence in \cite{BoltjePerepelitsky2020}.
\end{proof}
\begin{Remark} It follows that functorial equivalence over $\ZZ$ is almost the same notion as $p$-permutation equivalence, which only requires in addition that $\sigma$ be the opposite of $\omega$ in~(i).
\end{Remark}
\begin{proposition}\label{fun equivalence}
Let $b$ be a block idempotent of $kG$ and $c$ a block idempotent of $kH$. 

\smallskip
{\rm (i)} If $(G,b)$ and $(H,c)$ are functorially equivalent over~$\FF$, then we have $l(kGb)=l(kHc)$.  

\smallskip
{\rm (ii)} If $(G,b)$ and $(H,c)$ are functorially equivalent over~$\FF$, then $b$ and $c$ have isomorphic defect groups. 

\smallskip
{\rm (iii)} If $b$ has defect zero, then the functor $\FFTD_{G,b}$ is isomorphic to the simple functor $S_{1,1,\FF}$. In particular, all pairs $(G,b)$, where $b$ is a block of defect zero of $kG$, are functorially equivalent over~$\FF$.

\smallskip
{\rm (iv)} More generally, for any $p$-group $D$, all pairs $(G,b)$, where $b$ is a nilpotent block of $kG$ with defect isomorphic to $D$ are functorially equivalent over $\FF$. 
\end{proposition}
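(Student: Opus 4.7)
My plan is to treat the four assertions separately, exploiting the fact that by Corollary~\ref{Fppk semisimple} the category $\Fppk{\FF}$ is semisimple, so that an isomorphism $\FFTD_{G,b}\cong \FFTD_{H,c}$ is equivalent to equality of the multiplicities of every simple functor $S_{L,u,V}$ in these two decompositions.

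For part~(i), since $\FFTD_{G,b}\cong \FFTD_{H,c}$ in $\Fppk{\FF}$, the multiplicity of the trivial simple functor $S_{1,1,\FF}$ is the same on both sides. By Corollary~\ref{cor multiplicityoftrivial and defect}(i), this multiplicity is equal to $l(kGb)$ on the one side and to $l(kHc)$ on the other, so these numbers coincide.

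For part~(ii), let $D$ and $E$ be defect groups of $b$ and $c$ respectively. First I would show that $S_{D,1,\FF}$ appears with non-zero multiplicity in $\FFTD_{G,b}$. Since $(D,1)$ is clearly a $\DD$-pair and $\Out(D,1)$ acts on $\FF$ trivially, Theorem~\ref{thm multiplicityformulas}(c) reduces this to producing a single element of $\calL_b(G,D,1)$: I take $\pi=\id_D$ and any local point $\gamma$ of $D$ on $kGb$ (which exists precisely because $D$ is a defect group); the compatibility condition $\pi\circ i_1\circ\pi^{-1}=\id=\Res(i_1)$ with $1\in N_G(D_\gamma)$ is trivially satisfied. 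Hence $S_{D,1,\FF}$ is a summand of $\FFTD_{G,b}$, and therefore of $\FFTD_{H,c}$ as well; by Corollary~\ref{cor multiplicityoftrivial and defect}(ii) this forces $D$ to embed in $E$. The symmetric argument with $b$ and $c$ swapped gives an embedding of $E$ into $D$, so by finiteness $D\cong E$.

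For part~(iii), suppose $b$ has defect zero, so its defect group is trivial. Then by Corollary~\ref{cor multiplicityoftrivial and defect}(ii), the only simple functor that can appear as a summand of $\FFTD_{G,b}$ is $S_{L,u,V}$ with $L=1$; but then $u$ is a $p'$-element of $N_1(1)=1$, so $u=1$, and the unique simple $\FF\Out(1,1)$-module is $V=\FF$. Thus $\FFTD_{G,b}$ is isotypic of type $S_{1,1,\FF}$, and by Corollary~\ref{cor multiplicityoftrivial and defect}(i) its multiplicity equals $l(kGb)=1$. This gives $\FFTD_{G,b}\cong S_{1,1,\FF}$. In particular, any two pairs $(G,b)$ and $(H,c)$ with $b,c$ of defect zero are functorially equivalent over $\FF$. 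Finally, for part~(iv) I simply apply Theorem~\ref{thm nilpotentcharacterization}(iii): if $b$ is nilpotent of defect group isomorphic to $D$, then $\FFTD_{G,b}\cong \FFTD_D$, and the right-hand side depends only on the isomorphism class of $D$, since isomorphic groups are isomorphic objects of $Rpp_k^\Delta$ and thus give isomorphic representable functors.

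No step here is a serious obstacle: the arguments are essentially direct bookkeeping with the multiplicity formulas of Section~\ref{sec Blocksasfunctors} together with the semisimplicity of $\Fppk{\FF}$ and the nilpotent characterization of Section~\ref{sec nilpotentblocks}. The only point that requires a touch of care is the non-vanishing claim in part~(ii); the rest reduces to invoking the appropriate corollary.
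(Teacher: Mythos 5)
Your proposal is correct and follows essentially the same route as the paper: part (i) via Corollary~\ref{cor multiplicityoftrivial and defect}(i), part (ii) by noting $S_{D,1,\FF}$ occurs in $\FFTD_{G,b}$ (Theorem~\ref{thm multiplicityformulas}) and applying Corollary~\ref{cor multiplicityoftrivial and defect}(ii) symmetrically, part (iii) via the same corollary, and part (iv) from Theorem~\ref{thm nilpotentcharacterization}. Your extra detail — exhibiting the explicit element $(D_\gamma,\id_D)$ of $\calL_b(G,D,1)$ and checking $l(kGb)=1$ in the defect-zero case — is just a fuller write-up of what the paper leaves implicit.
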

\begin{proof}
The first statement follows from Corollary \ref{cor multiplicityoftrivial and defect}(i).  For the second statement, let $D$ be a defect group of $b$. The multiplicity of the simple functor $S_{D,1,\FF}$ at $\FFTD_{G,b}$ is non-zero by Theorem~\ref{thm multiplicityformulas}.  Hence it is also nonzero at $\FFTD_{H,c}$. By Corollary \ref{cor multiplicityoftrivial and defect}(ii), it follows that $D$ is isomorphic to a subgroup of a defect group of the block $c$. Similarly, one can show that a defect group of $c$ is isomorphic to a subgroup of a defect group of $b$ whence {\rm (ii)} holds.  For the third statement assume that $b$ has defect zero. Then by Corollary \ref{cor multiplicityoftrivial and defect}, the functor  $\FFTD_{G,b}$ is isomorphic to $l(kGb)S_{1,1,\FF}=S_{1,1,\FF}$. The last statement follows from Theorem \ref{thm nilpotentcharacterization}. 
\end{proof}

\begin{theorem}
Let $b$ be a block idempotent of $kG$ and $c$ a block idempotent of $kH$. The following are equivalent:

\smallskip
{\rm (i)} $(G,b)$ and $(H,c)$ are functorially equivalent over $\FF$.

\smallskip
{\rm (ii)} For any $\DD$-pair $(L,u)$ and any $\varphi\in\Aut(L,u)$, one has
\begin{align*}
| (G\setminus \calL_b(G,L,u))^\varphi | = | (H\setminus \calL_c(H,L,u))^\varphi |\,.
\end{align*}
\end{theorem}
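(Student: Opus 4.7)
The plan is to use the semisimplicity of $\Fppk{\FF}$ from Corollary~\ref{Fppk semisimple} to reduce~(i) to an equality of multiplicities of all simple functors, and then to reinterpret this via the isomorphism~(\ref{eqn isomwithlocalpoints}) as an isomorphism of permutation $\FF\Aut(L,u)$-modules, whose isomorphism class is controlled by fixed-point counts.

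First, I would invoke Corollary~\ref{Fppk semisimple} and Schur's lemma to rephrase~(i) as the assertion that, for every isomorphism class of $\DD$-pair $(L,u)$ and every simple $\FF\Out(L,u)$-module~$V$, the multiplicity of $S_{L,u,V}$ in $\FFTD_{G,b}$ equals its multiplicity in $\FFTD_{H,c}$. By the Yoneda-type identification of Lemma~\ref{lem naturaltransformations} together with~\ref{noth multiplicityofsimple}, this common multiplicity is the multiplicity of $V$ in the right $\FF\Out(L,u)$-module
\[
M_G(L,u) := b\,\FFTD(G,L\langle u\rangle)\,\widetilde{F^{L\langle u\rangle}_{L,u}},
\]
the $\FF\Aut(L,u)$-action factoring through $\FF\Out(L,u)$ by Corollary~\ref{cor algebraisom}. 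Letting $V$ run over all simples at a fixed $(L,u)$, condition~(i) is thus equivalent to: for every $\DD$-pair $(L,u)$, $M_G(L,u)\cong M_H(L,u)$ as right $\FF\Aut(L,u)$-modules.

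Second, the isomorphism~(\ref{eqn isomwithlocalpoints}) gives, as right $\FF\Aut(L,u)$-modules,
\[
M_G(L,u)\,\cong\,\FF\bigl[G\backslash \calL_b(G,L,u)\bigr], \qquad M_H(L,u)\,\cong\,\FF\bigl[H\backslash \calL_c(H,L,u)\bigr],
\]
exhibiting both sides as honest permutation modules for the finite group $\Aut(L,u)$ over the characteristic-zero field~$\FF$. Two such permutation modules are isomorphic iff their characters coincide, and the character of $\FF[X]$ at $\varphi$ equals~$|X^\varphi|$. Hence the isomorphism of $M_G(L,u)$ and $M_H(L,u)$ for a fixed $(L,u)$ is equivalent to $|(G\backslash \calL_b(G,L,u))^\varphi|=|(H\backslash \calL_c(H,L,u))^\varphi|$ for every $\varphi\in\Aut(L,u)$, and quantifying over $(L,u)$ recovers~(ii).

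The argument is a combinatorial assembly of results already in place — semisimplicity of $\Fppk{\FF}$, the algebra identification of Corollary~\ref{cor algebraisom}, the permutation-module description~(\ref{eqn isomwithlocalpoints}), Yoneda, Schur's lemma, and elementary character theory of permutation modules over a finite group — so no serious new obstacle arises. The most delicate bookkeeping point is that the $\Aut(L,u)$-action on $M_G(L,u)$ factors through $\Out(L,u)$, so that the simples occurring in its decomposition match the simple $\FF\Out(L,u)$-modules indexing the $S_{L,u,V}$; this factorization is immediate from Corollary~\ref{cor algebraisom}, and makes the two parametrizations compatible.
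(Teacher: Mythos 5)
Your proposal is correct and follows essentially the same route as the paper: both rest on semisimplicity of $\Fppk{\FF}$, the multiplicity description via the isomorphism~(\ref{eqn isomwithlocalpoints}), and the fact that permutation $\FF\Aut(L,u)$-modules in characteristic zero are determined by their fixed-point characters. Your only deviation is cosmetic: for (i)$\Rightarrow$(ii) you pass directly from equality of all multiplicities to an isomorphism $M_G(L,u)\cong M_H(L,u)$ and compare characters, whereas the paper reaches the same fixed-point identity by an explicit computation of $\langle\chi_{\mathcal C},1\rangle$ over the stabilizers; the (ii)$\Rightarrow$(i) direction is identical.
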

\begin{proof}
{\rm (i)} $\Rightarrow$ {\rm (ii)}: Suppose that $(G,b)$ and $(H,c)$ are functorially equivalent over $\FF$, and let $(L,u)$ be a $\DD$-pair.  For any simple $\FF\Out(L,u)$-module $V$, we have,  by Theorem~\ref{thm multiplicityformulas}(iii), 
\begin{align*}
\sum_{(P_\gamma,\pi)\in [G \backslash \calL_b(G,L,u) / \Aut(L,u) ]} \hspace{-3ex} \dim_{\FF}( V^{\Aut(L,u)_{(P_\gamma,\pi)}}) = \sum_{(Q_\delta,\rho)\in [H \backslash \calL_c(H,L,u) / \Aut(L,u) ]} \hspace{-3ex}\dim_{\FF}(V^{\Aut(L,u)_{(Q_\delta,\rho)}})\,.
\end{align*}
Note that since $\Inn(L,u)$ acts trivially on the $G$-orbits of $\calL_b(G,L,u)$, this means that we have
\begin{align*}
\sum_{(P_\gamma,\pi)\in [G \backslash \calL_b(G,L,u) / \Aut(L,u) ]} \hspace{-3ex} \dim_{\FF}( V^{\Out(L,u)_{(P_\gamma,\pi)}}) = \sum_{(Q_\delta,\rho)\in [H \backslash \calL_c(H,L,u) / \Aut(L,u) ]} \hspace{-3ex}\dim_{\FF}(V^{\Out(L,u)_{(Q_\delta,\rho)}})
\end{align*}
which implies that
\begin{align*}
\sum_{(P_\gamma,\pi)\in [G \backslash \calL_b(G,L,u) / \Aut(L,u) ]} \hspace{-3ex} \langle \chi_V,  1\rangle_{\Out(L,u)_{(P_\gamma,\pi)}} =  \sum_{(Q_\delta,\rho)\in [H \backslash \calL_c(H,L,u) / \Aut(L,u) ]} \hspace{-3ex}\langle \chi_V,  1\rangle_{\Out(L,u)_{(Q_\delta,\rho)}}\,.
\end{align*}
Since $\FF\Out(L,u)$ is semisimple, it follows that this equality holds for any class function on $\Out(L,u)$. So let $\calC$ be the conjugacy class of $\overline{\varphi}$ in $\Out(L,u)$ and $\chi_\calC$ denote the characteristic function of $\calC$. We have
\begin{align*}
&\sum_{(P_\gamma,\pi)\in [G \backslash \calL_b(G,L,u) / \Aut(L,u) ]} \hspace{-3ex} \langle \chi_\calC,  1\rangle_{\Out(L,u)_{(P_\gamma,\pi)}}\\&
=\sum_{(P_\gamma,\pi)\in [G \backslash \calL_b(G,L,u) / \Aut(L,u) ]} \frac{1}{|\Out(L,u)_{(P_\gamma,\pi)}|} \sum_{\overline{\psi}\in \Out(L,u)_{(P_\gamma,\pi)}} \chi_\calC(\overline{\psi})\\&
=\sum_{(P_\gamma,\pi)\in [G \backslash \calL_b(G,L,u)]} \frac{1}{|\Out(L,u)|} \sum_{\overline{\psi}\in \Out(L,u)_{(P_\gamma,\pi)} \cap \calC} 1\\&
=\frac{1}{|\Out(L,u)|} \sum_{\overline{\psi}\in \calC} \sum_{(P_\gamma,\pi)\in [(G \backslash \calL_b(G,L,u))^{\overline{\psi}}]} 1\\&
=\frac{1}{|\Out(L,u)|} |\calC| |[(G \backslash \calL_b(G,L,u))^{\overline{\varphi}}]| 
\end{align*}
Similarly, we have
\begin{align*}
\sum_{(Q_\delta,\rho)\in [H \backslash \calL_c(H,L,u) / \Aut(L,u) ]} \hspace{-3ex} \langle \chi_\calC,  1\rangle_{\Out(L,u)_{(Q_\delta,\rho)}}=\frac{1}{|\Out(L,u)|} |\calC| |[(H \backslash \calL_c(H,L,u))^{\overline{\varphi}}]|\,.
\end{align*}
Therefore, we have $|[(G \backslash \calL_b(G,L,u))^{\overline{\varphi}}]|= |[(H \backslash \calL_c(H,L,u))^{\overline{\varphi}}]|$ which implies {\rm (ii)}.

\smallskip
{\rm (ii)} $\Rightarrow$ {\rm (i)}: Suppose that {\rm (ii)} holds. Then the permutation $\FF\Aut(L,u)$-modules $\FF [G\backslash \calL_b(G,L,u)]$ and $\FF[H\backslash \calL_c(H,L,u)]$ have the same character and hence they are isomorphic. The result then follows from Theorem~\ref{thm multiplicityformulas}(iii). 
\end{proof}
\begin{theorem} \label{finiteness}Let $D$ be a finite $p$-group. Then there is only a finite number of pairs $(G,b)$, where $G$ is a finite group, and $b$ is a block idempotent of $kG$ with defect $D$, up to functorial equivalence over $\FF$.
\end{theorem}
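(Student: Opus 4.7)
The strategy is to exploit the semisimplicity of $\Fppk{\FF}$ (Corollary~\ref{Fppk semisimple}): the isomorphism type of $\FFTD_{G,b}$ is determined by its multiplicity sequence $(m_{L,u,V})_{(L,u,V)}$, where $m_{L,u,V}$ denotes the multiplicity of the simple functor $S_{L,u,V}$, and two pairs $(G,b)$ and $(H,c)$ are functorially equivalent over $\FF$ precisely when these sequences coincide. To control which triples can have $m_{L,u,V}(G,b)\ne 0$, I would use Corollary~\ref{cor multiplicityoftrivial and defect}(ii): this forces $L$ to embed in $D$. Since $D$ is finite, $L$ ranges over finitely many isomorphism types, and for each such $L$ both the $\DD$-pair structures $(L,u)$ (with $u\in\Aut(L)$ a $p'$-element subject to $C_{\langle u\rangle}(L)=1$) and the simple $\FF\Out(L,u)$-modules $V$ form finite sets. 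Hence the set $\calT_D$ of triples $(L,u,V)$ for which $m_{L,u,V}$ can be nonzero depends only on $D$ and is finite.

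It remains to bound each multiplicity $m_{L,u,V}(G,b)$ in terms of $|D|$ alone. Applying Theorem~\ref{thm multiplicityformulas}(b), this multiplicity is a double sum over $\calF_b$-classes of $b$-Brauer pairs $(P,e_P)$ with $P\cong L$ and over $\pi\in[\calP_{(P,e_P)}(L,u)]$, of terms
\[
\dim_{\FF}\bigl(\FF\Proj(ke_PC_G(P),u)\otimes_{\Aut(L,u)_{P,e,\pi}}V\bigr).
\]
The outer sum has at most as many terms as the subgroups of $D$, so $\le 2^{|D|}$; the middle sum is bounded by $|\mathrm{Isom}(L,P)|\le|\Aut(D)|$; and each summand is at most $\dim V\cdot\dim_{\FF}\FF\Proj(ke_PC_G(P),u)\le\dim V\cdot l(ke_PC_G(P))$. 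Since the defect group of the block $e_P$ of $kC_G(P)$ is (a $C_G(P)$-conjugate of) $C_D(P)$, whose order is at most $|D|$, the Brauer--Feit bound $k(B)\le\tfrac14|D_B|^2+1$ yields $l(ke_PC_G(P))\le\tfrac14|D|^2+1$. Combining these estimates gives $m_{L,u,V}(G,b)\le M(|D|)$ for some explicit function $M$ of $|D|$.

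Putting the two steps together, the multiplicity sequence $(m_{L,u,V})_{(L,u,V)\in\calT_D}$ lies in the finite set $\{0,1,\dots,M(|D|)\}^{\calT_D}$, so $\FFTD_{G,b}$ admits only finitely many isomorphism types as $(G,b)$ varies over pairs with $b$ of defect isomorphic to $D$, which is exactly the required finiteness. The most delicate ingredient is the Brauer--Feit inequality combined with the identification of a defect group of $e_P$ as $C_D(P)$ (an appeal to standard block-theoretic machinery); the rest of the argument is elementary bookkeeping built on the decomposition into simple functors furnished by Section~\ref{sec semisimplicity}.
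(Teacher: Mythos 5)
Your overall strategy is exactly the paper's: use semisimplicity, read off the multiplicities from Theorem~\ref{thm multiplicityformulas}(b), observe that only triples $(L,u,V)$ with $L$ embedding in $D$ can occur (finitely many, with finitely many $u$ and $V$ for each $L$), and bound each multiplicity by a constant depending only on $D$ via Brauer--Feit. Most of the bookkeeping is fine; one small slip is the bound $|\mathrm{Isom}(L,P)|\le|\Aut(D)|$, which is false in general (a subgroup of $D$ can have a larger automorphism group than $D$ itself), but this is harmless: $|\mathrm{Isom}(L,P)|=|\Aut(L)|$ when nonempty, and since $L$ runs over subgroups of $D$ up to isomorphism any bound depending only on $D$ (e.g. $\max_{L\le D}|\Aut(L)|$, as in the paper) suffices.

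The genuine problem is the step you yourself single out as delicate: the claim that the defect group of $e_P$ as a block of $kC_G(P)$ is (a $C_G(P)$-conjugate of) $C_D(P)$, for the \emph{fixed} maximal Brauer pair $(D,e_D)$ containing $(P,e_P)$. This is false. For instance, take the principal $2$-block of $G=\mathrm{PSL}(2,7)$, $D\cong D_8$ a Sylow $2$-subgroup, and $P=\langle r\rangle$ with $r$ a non-central involution of $D$: then $(P,e_P)\le(D,e_D)$, but $C_G(P)$ is itself a Sylow $2$-subgroup of order $8$, so its unique block $e_P$ has defect group of order $8$, while $|C_D(P)|=4$. The correct statement is an existence statement: one may \emph{choose} a defect group $D'$ of $b$ containing $P$ (i.e. replace the maximal pair by a suitable conjugate) for which the defect group of $e_P$, viewed as a block of $kPC_G(P)$, is $PC_{D'}(P)$; this is how the paper argues, using $l(ke_PC_G(P))=l(ke_PPC_G(P))$ and Corollary~4.5 of Alperin--Brou\'e. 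Since $|PC_{D'}(P)|\le|D'|=|D|$, Brauer--Feit then still yields $l(ke_PC_G(P))\le\tfrac14|D|^2+1$, so your bound $M(|D|)$ — and with it the finiteness conclusion — survives once this step is justified correctly; as written, however, the justification is wrong rather than merely imprecise.
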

\begin{proof}
We know that the functor $\FFTD_{G,b}$ splits as a direct sum of simple functors $S_{L,u,V}$. Using Theorem~\ref{thm multiplicityformulas} (b), we will show that only a finite number (depending only on $D$) of simple functors $S_{L,u,V}$, can appear in $\FFTD_{G,b}$, and that the multiplicity of $S_{L,u,V}$ as a summand of $\FFTD_{G,b}$ is bounded by a constant depending only on $D$. This will imply that, up to isomorphism, there is only a finite number of possibilities for the functor $\FFTD_{G,b}$, once the defect $D$ of $b$ is fixed.\par
Recall that the simple functors $S_{L,u,V}$ are parametrized by triples $(L,u,V)$, where $L$ is a finite $p$-group, $u$ is a faithful $p'$-automorphism of $L$, and $V$ is a simple $\FF \Out(L,u)$-module, where $\Out(L,u)$ is a quotient of $\Aut(L,u)$, itself a subgroup of the automorphism group $\Aut(L)$ of $L$. By Theorem~\ref{thm multiplicityformulas} (b), the multiplicity of $S_{L,u,V}$ as a summand of~$\FFTD_{G,b}$ is equal to the $\FF$-dimension of
$$m_{l,u,V}(b)=\bigoplus_{(P,e_P) \in [\calF_b]} \bigoplus_{\pi \in [\calP_{(P,e_P)}(L,u)]}  \FF\Proj(keC_G(P),u)\otimes_{\Aut(L,u)_{P,e,\pi}} V,$$
where $\calF_b$ is the fusion system of $b$ and $[\calP_{(P,e_P)}(L,u)]$ is a set of $N_G(P,e_P)\times\Aut(L,u)$-orbits of the set $\calP_{(P,e_P)}(L,u)$ of group isomorphisms $\pi:L\to P$ such that $\pi\circ i_u\circ \pi^{-1}$ is an automorphism of $(P,e)$ in the fusion system $\calF_b$. Moreover $\Proj\big(keC_G(P),u\big)$ is a subgroup of the group of projective $keC_G(P)$-modules.\par
It follows that if $S_{L,u,V}$ appears in $\FFTD_{G,b}$ with non zero multiplicity, then $L$ is isomorphic to a subgroup of $D$. Hence there is only a finite number of such groups $L$, up to isomorphism. For each~$L$, there is only a finite number - at most $|\Aut(L)|$ - of faithful $p'$-automorphisms $u$ of $L$, and for each such $u$, there is only a finite number - at most $|\Aut(L)|$ again - of simple $\FF\Aut(L,u)$-modules, up to isomorphism. Hence the number of simple summands of $\FFTD_{G,b}$ is bounded by a number $c_D$ depending only on $D$.\par
Now for such a summand $S_{L,u,V}$, the dimension of the $\FF$-vector space $\FF\Proj\big(keC_G(P),u\big)\otimes_{\Aut(L,u)_{P,e,\pi}} V$ is less than or equal to $\dim \FF\Proj\big(keC_G(P)\big)\dim V$, and moreover $\dim V\le|\Out(L,u)|\le |\Aut(L)|$. Now $\dim \FF\Proj\big(keC_G(P)\big)$ is equal to the number $l\big(keC_G(P)\big)$ of simple $keC_G(P)$-modules, which is equal to the number $l\big(kePC_G(P)\big))$ of simple $kePC_G(P)$-modules, as $P$ acts trivially on each simple $kePC_G(P)$-module. Now $e$ is a block idempotent of $kPC_G(P)$, and by Corollary~4.5 of~\cite{alperin-broue1979}, we can assume that $P\le D$, and that $e$ has defect $PC_D(P)$, which is a subgroup of $D$. By Theorem~1 of~\cite{brauer-feit1959}, the number of ordinary  irreducible characters in a block $c$ of a finite group $H$ is at most $\frac{1}{4}p^{2d}+1$, where $p^d$ is the order of the defect group of $c$. This number is smaller than $2p^{2d}$. It follows that $l\big(keC_G(P)\big)$, which is at most equal to the number of ordinary irreducible characters in the block $e$ of $PC_G(P)$, is smaller than $2|PC_D(P)|^2\le 2|D|^2$. \par
Finally, we get that 
$$m_{L,u,V}(b)\le n_{D,L}2|D|^2|\Aut(L)|^2,$$
where $n_{D,L}$ is the number of subgroups of $D$ isomorphic to $L$. So there is a constant $m_D$, depending only on $D$, such that $m_{L,u,V}(b)\le m_D$ (for example $m_D=n_D2|D|^2M_D$, where $n_D$ is the number of subgroups of $D$, and $M_D$ is the sup of $|\Aut(L)|$ over subgroups $L$ of~$D$). This bound only depends on $D$, as was to be shown. This completes the proof.
\end{proof} 

\section{Blocks with abelian defect groups}

Let $b$ be a block idempotent of $kG$ with an abelian defect group $D$. Let $c$ be a block idempotent of $kH$ which in Brauer correspondence with $b$ where $H=N_G(D)$. Let $(L,u)$ be a $\DD$-pair with the property that the multiplicity of $S_{L,u,V}$ in $\FFTD_{G,b}$ is non-zero for some $V\in \FF\Out(L,u)$-mod. Then $L$ is also abelian.  

Let $(D,e_D)$ be a maximal $b$-Brauer pair and note that $(D,e_D)$ is also a maximal $c$-Brauer pair. For every $P\le D$, let $e_P\in \mathrm{Bl}(kC_G(P))$ and $f_P\in \mathrm{Bl}(kC_H(P))$ such that $(P,e_P)\le_G (D,e_D)$ and $(P,f_P)\le_H (D,e_D)$.  One can show that the block idempotents $e_P$ and $f_P$ are Brauer correspondents. Let $\calF_b$ be the fusion system of $b$ associated to $(D,e_D)$ and let $\calF_c$ be the fusion system of $c$ associated to $(D,e_D)$.  

Recall that by (\ref{eqn isomwithfusion}), we have isomorphisms
\begin{align*}
b\FFTD(G,L\langle u\rangle)\widetilde{F^{L\langle u\rangle}_{L,u}}\cong \bigoplus_{(P,e_P) \in [\calF_b]} \bigoplus_{\pi \in [\calP_{(P,e_P)}^G(L,u)]} \Ind_{\Aut(L,u)_{P,e_P,\pi}}^{\Aut(L,u)}  \FF\Proj(ke_PC_G(P),u)
\end{align*}
and
\begin{align*}
c\FFTD(H,L\langle u\rangle)\widetilde{F^{L\langle u\rangle}_{L,u}}\cong \bigoplus_{(P,f_P) \in [\calF_c]} \bigoplus_{\pi \in [\calP_{(P,f_P)}^H(L,u)]} \Ind_{\Aut(L,u)_{P,f_P,\pi}}^{\Aut(L,u)}  \FF\Proj(kf_PC_H(P),u)
\end{align*}
of right $\FF\Aut(L,u)$-modules. 

\smallskip
{\rm (a)} Let $(P,e_P)\in\calF_b$.  Let also $s\in N_G(P,e_P)$. Then since $D$ is abelian, we have $N_G(P,e_P)\subseteq N_G(D,e_D)C_G(P)$. Hence we have $s=ht$ for some $h\in N_G(D,e_D)\le H$ and $t\in C_G(P)$.  Thus $i_s = i_h:P\to P$ and $h\in N_H(P,e_P)$.

\smallskip
{\rm (b)} We have $(P,e_P)\equiv_{\calF_b} (Q, e_Q)$ if and only if $(P,f_P)\equiv_{\calF_c} (Q, f_Q)$. Indeed, let $\phi: (P,e_P)\to (Q,e_Q)$ be an isomorphism in $\calF_b$.  Then by Alperin's fusion theorem, there exists an automorphism $\psi:(D,e_D)\to (D,e_D)$ such that $\psi|_P=\phi$.  There exists an element $g\in G$ such that $\psi(x)=\lexp{g}x$ for any $x\in D$ and such that $\lexp{g}(D,e_D)=(D,e_D)$.  This implies in particular that $g\in N_G(D,e_D)\le H$.  This shows that $\psi\in\calF_c$ and $\lexp{g}f_P=\lexp{g}{(\Br_D^{C_G(P)}(e_P))}=\Br_D^{C_G(Q)}(\lexp{g}e_P)=f_Q$.  Hence $\psi |_P:(P,f_P)\to (Q,f_Q)$ is an isomorphism in $\calF_c$. 

Conversely, let $\phi: (P,f_P)\to (Q,f_Q)$ be an isomorphism in $\calF_c$. Then again by Alperin's fusion theorem, there exists $\psi: (D,e_D)\to (D,e_D)$ such that $\psi |_P = \phi$. Let $h\in N_H(D,e_D)$ with $\psi=i_h$. We have $f_Q=\lexp{h}f_P=\lexp{h}{(\Br_D^{C_G(P)}(e_P))}=\Br_D^{C_G(Q)}(\lexp{h}e_P)$ which implies that $\lexp{h}e_P$ and $f_Q$ are Brauer correspondents. Hence $\lexp{h}e_P=e_Q$ and $\psi |_P:(P,e_P)\to (Q,e_Q)$ is an isomorphism in $\calF_b$. 

\smallskip
{\rm (c)} We have $\calP_{(P,e_P)}^G(L,u) = \calP_{(P,f_P)}^H(L,u)$ for any $P\le D$.  Let $\pi\in \calP_{(P,e_P)}^G(L,u)$. Then by definition $\pi i_u\pi^{-1} \in\Aut_{\calF_b}(P,e_P)$. So there exists $s\in N_G(P,e_P)$ such that $\pi i_u\pi^{-1} = i_s$. By part~{\rm (a)}, there exists $h\in N_H(P,e_P)$ with $i_s=i_h$.  But then $h\in N_H(P,f_P)$ and hence $\pi\in  \calP_{(P,e_P)}^H(L,u)$ as desired. 

Conversely, let $\pi\in \calP^H_{(P,f_P)}(L,u)$. Then $\pi i_u\pi^{-1}=i_h$ for some $h\in N_H(P,f_P)$. By part {\rm (a)} we can choose $h\in N_H(D,e_D)$.  Then one shows that $\lexp{h}e_P = e_P$ which implies that $h\in N_H(P,e_P)\subseteq N_G(P,e_P)$.  Therefore, $\pi\in  \calP_{(P,e_P)}^G(L,u)$.

\smallskip
{\rm (d)} Let $P\le D$ and let $\pi, \rho \in \calP_{(P,f_P)}^H(L,u)=\calP_{(P,e_P)}^G(L,u)$. Then $\rho \in N_G(P,e_P) \cdot \pi \cdot \Aut(L,u)$ if and only if $\rho\in N_H(P,f_P) \cdot \pi \cdot \Aut(L,u)$.  Indeed, suppose that for some $g\in N_G(P,e_P)$ and $\varphi \in \Aut(L,u)$ we have $\rho =i_g \pi\varphi$.  By part {\rm (a)}, there exists $h\in N_H(P,e_P)$ with $i_g=i_h$.  But then $h\in N_H(P,f_P)$ and $\rho=i_h\pi \varphi$ which proves the claim. Converse is proved similarly.

\smallskip
{\rm (e)} Let $P\le D$ and let $\pi\in [\calP_{(P,e_P)}^G(L,u)] = [\calP_{(P,f_P)}^H(L,u)]$. We have $\Aut(L,u)_{P,e_P,\pi}^G=\Aut(L,u)_{P,f_P,\pi}^H$.  Indeed, let $\varphi\in \Aut(L,u)_{P,e,\pi}^G$.  Then there exists $g\in N_G(P,e_P)$ such that $\pi \varphi \pi^{-1} = i_g$. As above this means that there exists $h\in N_H(P,f_P)$ such that $\pi \varphi \pi^{-1} = i_g = i_h$. This proves that $\varphi \in \Aut(L,u)_{P,f_P,\pi}^H$. 

Conversely, if $\varphi \in \Aut(L,u)_{P,f_P,\pi}^H$, then there exists $h\in N_H(P,f_P)$ such that $\pi \varphi \pi^{-1}=i_h$. Again as above, we can choose $h\in N_H(D,e_D)$ and hence it follows that $h\in N_H(P,e_P)\subseteq N_G(P,e_P)$. Therefore, $\varphi\in \Aut(L,u)_{P,e,\pi}^G$.

All these imply the following.
\begin{theorem}\label{abelian defect}
Let $b$ be a block idempotent of $kG$ with an abelian defect group $D$. Let $c$ be a block idempotent of $kH$ which is in Brauer correspondence with $b$ where $H=N_G(D)$. Let $(D,e_D)$ be a maximal $b$-Brauer pair. For every $P\le D$, let $e_P\in \mathrm{Bl}(kC_G(P))$ and $f_P\in \mathrm{Bl}(kC_H(P))$ such that $(P,e_P)\le_G (D,e_D)$ and $(P,f_P)\le_H (D,e_D)$.

\smallskip
{\rm (a)} The multiplicity of $S_{D,u,V}$ in $\FFTD_{G,b}$ and $\FFTD_{H,c}$ are the same.

\smallskip
{\rm (b)} If for every $P\le D$ and $s\in N_H(P,f_P)_{p'}$ we have an isomorphism
\begin{align*}
\FF\Proj(ke_P C_G(P),s)\cong \FF\Proj(kf_PC_H(P),s)
\end{align*}
of $\FF N_H(P,f_P)$-modules, then $(G,b)$ and $(H,c)$ are functorially equivalent over $\FF$. 
\end{theorem}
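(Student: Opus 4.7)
The plan is to apply the isomorphism (\ref{eqn isomwithfusion}) from Section~8 to both $(G,b)$ and $(H,c)$ for each $\DD$-pair $(L,u)$, and to combine it with the five observations (a)--(e) preceding the theorem statement, which identify the fusion-theoretic indexing data of $b$ and $c$ as right $\Aut(L,u)$-sets. By the semisimplicity of $\Fppk{\FF}$ (Corollary~\ref{Fppk semisimple}), functorial equivalence is equivalent to equality of multiplicities of all simple summands $S_{L,u,V}$, and these multiplicities are read off of Theorem~\ref{thm multiplicityformulas}.

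For part (a), take $L=D$. Any isomorphism $\pi:D\to P$ with $P\le D$ forces $P=D$, so the outer sum in (\ref{eqn isomwithfusion}) collapses to a single term indexed by the maximal $b$-Brauer pair $(D,e_D)$. Applying (\ref{eqn isomwithfusion}) to $(H,c)$ with the same $L=D$, only $(D,e_D)$ contributes because $(D,e_D)$ is simultaneously a maximal $c$-Brauer pair. Since $H=N_G(D)$ contains $C_G(D)$ we have $C_G(D)=C_H(D)$, and $e_D$ is the same block of this common centralizer on both sides; observation (c) gives $\calP^G_{(D,e_D)}(D,u)=\calP^H_{(D,e_D)}(D,u)$, and observation (e) identifies the stabilizers. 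Hence the right-hand sides of (\ref{eqn isomwithfusion}) for $b$ and $c$ agree as right $\FF\Aut(D,u)$-modules, and Theorem~\ref{thm multiplicityformulas} yields part (a).

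For part (b), I would repeat this comparison for an arbitrary $\DD$-pair $(L,u)$. Observation (b) provides a bijection $[\calF_b]\leftrightarrow[\calF_c]$ compatible with the indexing; observation (c) identifies $\calP^G_{(P,e_P)}(L,u)$ with $\calP^H_{(P,f_P)}(L,u)$; observation (d) shows these sets have the same orbit decomposition whether one quotients by $N_G(P,e_P)\times\Aut(L,u)$ or by $N_H(P,f_P)\times\Aut(L,u)$; and observation (e) identifies the two stabilizer subgroups of $\Aut(L,u)$. For each $\pi$ in an orbit representative set, the conjugation $\pi i_u\pi^{-1}$ is realized by some $s\in N_H(P,f_P)$, chosen $p'$ via \ref{noth introductionofcalP}(d). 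The hypothesis then furnishes an $\FF N_H(P,f_P)$-linear isomorphism $\FF\Proj(ke_PC_G(P),u)\cong\FF\Proj(kf_PC_H(P),u)$. Restricting along the homomorphism $\Aut(L,u)_{P,f_P,\pi}\to N_H(P,f_P)/C_H(P)$ from \ref{noth isomwithD}(a) and inducing up to $\Aut(L,u)$, one concludes
\begin{align*}
b\FFTD(G,L\langle u\rangle)\widetilde{F^{L\langle u\rangle}_{L,u}}\cong c\FFTD(H,L\langle u\rangle)\widetilde{F^{L\langle u\rangle}_{L,u}}
\end{align*}
as right $\FF\Aut(L,u)$-modules. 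Via Theorem~\ref{thm multiplicityformulas} and semisimplicity, this upgrades to $\FFTD_{G,b}\cong\FFTD_{H,c}$.

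The main obstacle I foresee lies in the compatibility verification in part (b): the hypothesis only supplies an $\FF N_H(P,f_P)$-linear isomorphism, and one must check that this is automatically compatible with the subtler right $\Aut(L,u)_{P,f_P,\pi}$-action appearing in (\ref{eqn isomwithfusion}). This reduces to unwinding the homomorphism $\Aut(L,u)_{P,f_P,\pi}\to N_H(P,f_P)/C_H(P)$ of \ref{noth isomwithD}(a), and checking that it is the one induced by $\pi$; in principle routine, but easy to get wrong on a first pass, and worth writing out carefully.
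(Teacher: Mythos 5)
Your proposal is correct and follows essentially the same route as the paper: the paper's proof of Theorem~\ref{abelian defect} consists precisely of combining the isomorphism (\ref{eqn isomwithfusion}) with the observations (a)--(e) preceding the statement (matching Brauer pairs, the sets $\calP_{(P,e_P)}^G(L,u)=\calP_{(P,f_P)}^H(L,u)$, their orbits, and the stabilizers in $\Aut(L,u)$), then invoking Theorem~\ref{thm multiplicityformulas} and semisimplicity, exactly as you do; for $L=D$ the collapse to the single pair $(D,e_D)$ with $C_G(D)=C_H(D)$ and $N_G(D,e_D)=N_H(D,e_D)$ gives (a). The compatibility point you flag in (b) is indeed the only delicate step, and it is settled by observation (e) together with \ref{noth isomwithD}(a), since the $\Aut(L,u)_{P,e_P,\pi}$-action on $\FF\Proj(ke_PC_G(P),u)$ factors through conjugation by elements of $N_H(P,f_P)$ modulo $C_G(P)$.
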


\section*{Acknowledgement}

The second author is supported by the Scientific and Technological Research Council of Turkey (T{\"u}bitak) through 2219 -- International Postdoctoral Research Fellowship Program.

\centerline{\rule{5ex}{.1ex}}
\begin{flushleft}
Serge Bouc, CNRS-LAMFA, Universit\'e de Picardie, 33 rue St Leu, 80039, Amiens, France.\\
{\tt serge.bouc@u-picardie.fr}\vspace{1ex}\\
Deniz Y\i lmaz, LAMFA, Universit\'e de Picardie, 33 rue St Leu, 80039, Amiens, France.\\
{\tt deyilmaz@ucsc.edu}
\end{flushleft}
\end{document}